\newcommand{\reg}{{\rm reg}}
\renewcommand{\det}{{\rm det}}
\renewcommand{\Im}{{\rm Im}}
\renewcommand{\dim}{{\rm dim}}
\renewcommand{\deg}{{\rm deg}}
\newcommand{\rk}{{\rm rk}}
\newcommand{\coker}{{\rm coker}}
\renewcommand{\det}{{\rm det}}
\newcommand{\dblq}{{/\!/}}
\theoremstyle{plain}
\newtheorem{thm}{Theorem}[section]
\newtheorem{prop}[thm]{Proposition}
\newtheorem{lem}[thm]{Lemma}
\theoremstyle{definition}
\newtheorem{defn}[thm]{Definition}
\newtheorem{rmk}[thm]{Remark}
\def\kk{{\mathbf k}}
\def\FF{{\mathbf F}}
\def\GG{{\mathbf G}}
\def\PP{{\textbf P}}
\def\OO{\mathcal{O}}
\def\cB{\mathcal{B}}
\def\cA{\mathcal{A}}
\def\F{\mathcal{F}}
\def\cM{\mathcal{M}}
\def\H{\mathcal{H}}
\def\mm{\overline{\mathcal{M}}}
\def\mmp{\overline{\mathcal{M}}_g^{\mathrm{ps}}}
\newcommand{\bb}[1]{\mathbb{#1}}
\newcommand{\mc}[1]{\mathcal{#1}}
\newcommand{\mf}[1]{\mathfrak{#1}}
\newcommand{\ol}[1]{\overline{#1}}
\newcommand{\tl}[1]{\widetilde{#1}}
\newcommand{\ul}[1]{\underline{#1}}
\newcommand{\defi}[1]{{\em #1}}
\newcommand{\op}[1]{\operatorname{#1}}
\DeclareMathOperator{\ShExt}{\mathscr{E}\text{\kern -3pt {\calligra\large xt}}\,}
\renewcommand{\a}{\alpha}
\renewcommand{\b}{\beta}
\newcommand{\bw}{\bigwedge}
\newcommand{\chr}{\operatorname{char}}
\renewcommand{\coker}{\operatorname{Coker}}
\newcommand{\eps}{\epsilon}
\renewcommand{\ll}{\lambda}
\renewcommand{\ker}{\operatorname{Ker}}
\newcommand{\LL}{\Lambda}
\newcommand{\onto}{\twoheadrightarrow}
\newcommand{\oo}{\otimes}
\newcommand{\pd}{\partial}
\newcommand{\s}{\sigma}
\newcommand{\T}{\mathcal{T}}
\newcommand{\Lop}{\mathfrak{L}} 
\newcommand{\Rop}{\mathfrak{R}} 
\renewcommand{\sl}{\mathfrak{sl}}
\def\lra{\longrightarrow}
\def\llra{\longleftrightarrow}
\newcommand*{\lhra}{\ensuremath{\lhook\joinrel\relbar\joinrel\rightarrow}}
\newcommand{\id}{\operatorname{id}}
\newcommand{\D}{\operatorname{D}}
\newcommand{\Gr}{\operatorname{Gr}}
\newcommand{\Spec}{\operatorname{Spec}}
\newcommand{\relSpec}{\underline{\operatorname{Spec}}}
\newcommand{\Sym}{\operatorname{Sym}}
\newcommand{\Tor}{\operatorname{Tor}}
\theoremstyle{plain}
\newtheorem{corollary}[thm]{Corollary}
\newtheorem{lemma}[thm]{Lemma}
\newtheorem{proposition}[thm]{Proposition}
\theoremstyle{definition}
\newtheorem{remark}[thm]{Remark}
\newtheorem*{ack}{Acknowledgment}
\newtheorem*{thm-main*}{Main Theorem}
\begin{document}
\title{Koszul modules and Green's conjecture}


\author[M. Aprodu]{Marian Aprodu}
\address{Marian Aprodu: Simion Stoilow Institute of Mathematics
\hfill \newline\texttt{}
 \indent P.O. Box 1-764,
RO-014700 Bucharest, Romania \&
\hfill \newline\texttt{}
 \indent
 Faculty of Mathematics and Computer Science, University of Bucharest}
\email{{\tt marian.aprodu@imar.ro} \& {\tt marian.aprodu@fmi.unibuc.ro}}

\author[G. Farkas]{Gavril Farkas}
\address{Gavril Farkas: Institut f\"ur Mathematik, Humboldt-Universit\"at zu Berlin \hfill \newline\texttt{}
\indent Unter den Linden 6,
10099 Berlin, Germany}
\email{{\tt farkas@math.hu-berlin.de}}

\author[\c S. Papadima]{\c Stefan Papadima\textsuperscript{\textdagger} }
\address{\c Stefan Papadima: Simion Stoilow Institute of Mathematics \hfill \newline\texttt{}
\indent P.O. Box 1-764,
RO-014700 Bucharest, Romania}
\thanks{\c Stefan Papadima passed away on  January 10, 2018. }

\author[C. Raicu]{Claudiu Raicu}
\address{Claudiu Raicu: Department of Mathematics,
University of Notre Dame \hfill \newline\texttt{}
\indent 255 Hurley Notre Dame, IN 46556, USA, and \hfill\newline\texttt{}
\indent Simion Stoilow Institute of Mathematics, \hfill\newline\texttt{}
\indent  P.O. Box 1-764, RO-014700 Bucharest, Romania}
\email{{\tt craicu@nd.edu}}

\author[J. Weyman]{Jerzy Weyman}
\address{Jerzy Weyman: Department of Mathematics
University of Connecticutt \hfill \newline\texttt{}
\indent Storrs, CT,
06269-3009, USA}
\email{{\tt  jerzy.weyman@uconn.edu}}

\begin{abstract}
We prove a strong vanishing result for finite length Koszul modules, and use it to derive Green's conjecture for every $g$-cuspidal rational curve over an algebraically closed field~$\kk$, with $\chr(\kk)=0$ or $\chr(\kk)\geq \frac{g+2}{2}$. As a consequence, we deduce that  the general canonical curve of genus~$g$ satisfies Green's conjecture in this range. Our results are new in positive characteristic, whereas in characteristic zero they provide a different proof for theorems first obtained in two landmark papers by Voisin. Our strategy  involves establishing two key results of independent interest: (1) we describe an explicit, characteristic-independent version of Hermite reciprocity for $\sl_2$-representations; (2) we completely characterize, in arbitrary characteristics, the (non-)vanishing behavior of the syzygies of the tangential variety to a rational normal curve.
\end{abstract}

\maketitle

\section{Introduction}

Formulated in 1984, Green's Conjecture \cite[Conjecture~5.1]{G84} predicts that one can recognize in a precise way the intrinsic complexity of a smooth algebraic curve from the syzygies of its canonical embedding. If $C\hookrightarrow \mathbb P^{g-1}$ is a non-hyperelliptic canonically  embedded curve of genus $g$,  we denote by $K_{i,j}(C,\omega_C)$ the Koszul cohomology group of $i$-th syzygies of weight $j$. Green's Conjecture predicts the equivalence
\[K_{i,1}(C,\omega_C)=0 \Longleftrightarrow i\geq g-\mbox{Cliff}(C)-1,\]
where $\mbox{Cliff}(C)$ is the Clifford index of $C$. Equivalently, $K_{i,2}(C,\omega_C)=0$ if and only if $i<\mbox{Cliff}(C)$. Although for arbitrary curves the conjecture remains wide open, Green's Conjecture for a \emph{general} curve of every genus has been resolved using geometric methods in two landmark papers by Voisin \cite{V02,V05}. In this case one has  $\mbox{Cliff}(C)=\lfloor \frac{g-1}{2}\rfloor$ and Green's Conjecture reduces to a single vanishing statement
\begin{equation}\label{eq:generic-vanishing}
K_{\lfloor \frac{g}{2}\rfloor,1}(C,\omega_C) = 0.
\end{equation}

More direct approaches have been proposed over the years to solve the Generic Green Conjecture (even prior to Voisin's papers), but none has been so far brought to fruition. One of them is described in Eisenbud's paper \cite[Section~3.I]{E91} and relies on degeneration and proving the vanishing ~(\ref{eq:generic-vanishing}) for a general canonically embedded $g$-cuspidal rational curve. This approach reduces the Generic Green Conjecture to a rather impenetrable looking problem in $\mathfrak{sl}_2$-representation theory. Using a novel perspective on these questions essentially inspired from topology, and which we have already put to work in \cite{koszul-groups} to attack questions in geometric group theory, we complete this approach by reducing the Generic Green Conjecture to a strong vanishing result for finite length Koszul modules. In characteristic zero, this gives a new, relatively short proof of Green's Conjecture for generic curves of any genus. Compared to Voisin's approach, we do not use the geometry of $K3$ surfaces and Hilbert schemes of $0$-dimensional subschemes, but instead we view the Generic Green Conjecture as a question on the geometry of the Grassmannian of lines, which we then solve by using general vanishing results for homogeneous bundles (Bott's Theorem). This approach also works in positive characteristic, as we shall explain.

\vskip 4pt

Let $\T\subseteq \PP^{g}$ denote the tangential variety of the degree $g$ rational normal curve $\Gamma\subseteq \PP^g$, over some algebraically closed field~$\kk$. A general hyperplane section $C$ of $\T$ is a canonically embedded $g$-cuspidal rational curve, the cusps corresponding to the points of intersection of $C$ with $\Gamma$. Using a form of the Lefschetz hyperplane principle \cite[Lemma 2.19]{AN10}, one has the isomorphism $$K_{i,j}\bigl(\T,\OO_{\T}(1)\bigr)\cong K_{i,j}(C,\omega_C)$$ for all $i,j$. In characteristic zero, it is a consequence of the theory of limit linear series of Eisenbud and Harris \cite{EH1} that all $g$-cuspidal rational curves verify the Brill--Noether Theorem, and in particular they satisfy $\mbox{Cliff}(C)=\lfloor \frac{g-1}{2}\rfloor$. In this way, Green's Conjecture for $C$ is turned into a concrete question about the syzygies of~$\T$. We prove the following:

\begin{thm}
\label{thm:vanishing-tangential}
 If $g\geq 3$ and $\chr(\kk)=0$ or $\chr(\kk)\geq \frac{g+2}{2}$, then $K_{\lfloor \frac{g}{2}\rfloor,1}\bigl(\mathcal{T}, \OO_{\mathcal{T}}(1)\bigr)=0$.
\end{thm}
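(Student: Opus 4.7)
The plan is to recognize $K_{\lfloor g/2\rfloor, 1}\bigl(\mathcal{T}, \mathcal{O}_{\mathcal{T}}(1)\bigr)$ as the top nonzero graded piece of a finite-length Koszul module naturally attached to an $\sl_2$-representation, and then deduce its vanishing from a general Koszul-module vanishing theorem proved by a cohomology calculation on the Grassmannian of lines. Let $V$ be the standard $2$-dimensional $\sl_2$-module and $V_g := \Sym^g V$, so that $\mathbb{P}^g = \mathbb{P}(V_g^{\vee})$, the rational normal curve $\Gamma$ is the locus of $g$-th powers, and $\mathcal{T}$ parametrizes binary forms of degree $g$ divisible by the square of a linear form. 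The first step is to use the $\sl_2$-equivariant description of the defining ideal of $\mathcal{T}$ together with standard Koszul cohomology machinery to identify the target group with the top graded component of a Koszul module $W(V_g, K)$, where $K \subset \bigwedge^2 V_g$ is the $\sl_2$-stable subspace of weight-two syzygies of $\mathcal{T}$; by the general formalism of \cite{koszul-groups}, this graded piece is computable as a single cohomology group $H^1\bigl(\Gr(2, V_g), \mathcal{E}\bigr)$ of an explicit homogeneous bundle $\mathcal{E}$ on the Grassmannian of lines, built from the tautological sub- and quotient bundles and from $K$.

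To make this cohomology accessible to Bott's theorem, one has to reorganize the $\sl_2$-content of $\mathcal{E}$ using Hermite reciprocity, which in characteristic zero yields the canonical identification $\Sym^a \Sym^b V \cong \Sym^b \Sym^a V$. The second step of the plan is therefore to construct an explicit characteristic-free form of this isomorphism via a natural pairing between the two sides, and to compute the exact primes at which that pairing fails to be non-degenerate; I expect these to be precisely the primes $< (g+2)/2$, which accounts for the hypothesis on $\chr(\kk)$. With this reciprocity in hand, $\mathcal{E}$ becomes a twist of symmetric and exterior powers of the tautological bundles on $\Gr(2, V_g)$, and the desired acyclicity can be verified in characteristic $p$ via Kempf vanishing together with Serre duality, both of which behave well for bundles of rank $\leq 2$.

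The main obstacle I anticipate is propagating the characteristic condition through these two independent steps, namely Hermite reciprocity and the Bott-type vanishing, and confirming that the relevant $H^1$ really vanishes in the full stated range. In particular, verifying that no unwanted higher cohomology contributes in characteristic $p \geq (g+2)/2$, and that the Hermite pairing remains invertible on every $\sl_2$-isotypic component that actually appears in $\mathcal{E}$, is where I expect the heart of the argument to lie; any slackness in either bound would propagate to a worse numerical hypothesis on $\chr(\kk)$ than the optimal one stated in the theorem.
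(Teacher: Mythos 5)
Your overall scheme --- relate the syzygies of $\mc{T}$ to a finite-length Koszul module, prove its vanishing by Bott/Kempf cohomology on a Grassmannian of lines, and use a characteristic-free form of Hermite reciprocity to make the translation --- captures the paper's themes, but your first step is structurally wrong in a way that blocks the argument. You propose that $K_{\lfloor g/2\rfloor,1}\bigl(\mc{T},\OO_{\mc{T}}(1)\bigr)$ is the top graded piece of a Koszul module $W(V_g,K)$ built on the \emph{ambient} space $V_g=\Sym^g U$ of dimension $g+1$, with $K\subset\bw^2 V_g$ the quadratic syzygies, and that the computation then lives on $\Gr(2,V_g)$. But by its definition, a Koszul module $W(V,K)$ is the middle homology of $K\oo S\to V\oo S\to S$; its graded pieces only ever see $\bw^2 V$, $V$ and $S$, and carry no information about the higher exterior powers $\bw^{i+1}V$ that enter $K_{i,1}$ for $i\geq 2$. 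There is no ``general formalism'' that reads off $K_{\lfloor g/2\rfloor,1}$ from $W(V_g,K)$. The paper's actual identification (Theorems~\ref{thm:intro-ki2=Wi+2} and~\ref{cor=kw}) realizes $K_{i,2}\bigl(\mc{T},\OO_{\mc{T}}(1)\bigr)$ as the graded piece $W^{(i+2)}_{g-3-i}$ of a Weyman module $W(\D^{i+2}U,\D^{2i+2}U)$ built on the \emph{small} vector space $\D^{i+2}U$ of dimension $i+3$, and the Grassmannian in the vanishing theorem is correspondingly $\Gr_2\bigl((\D^{i+2}U)^\vee\bigr)$ of dimension $2i+2$, not the $(2g-2)$-dimensional $\Gr(2,V_g)$. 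Trading the high wedge $\bw^{i+1}$ on the big space for symmetric powers of the small space is precisely what Hermite reciprocity accomplishes, and establishing that this trade really computes Koszul cohomology of $\mc{T}$ requires the mapping-cone and Kempf--Weyman apparatus of Section~\ref{sec:repth}; it is not automatic from \cite{koszul-groups}.

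Second, you misattribute the source of the characteristic bound. You expect $\chr(\kk)\geq\frac{g+2}{2}$ to arise because the Hermite pairing ``fails to be non-degenerate'' at small primes. In fact the explicit isomorphism $\psi_d^i:\Sym^d(\D^i U)\to\bw^i(\Sym^{d+i-1}U)$ of Lemma~\ref{lem:psi_d-equivariant} is constructed from the change of basis between elementary symmetric and Schur polynomials and holds over every field with no restriction whatsoever on the characteristic; that is exactly why the paper works with divided powers $\D^i$ rather than $\Sym^i$. The restriction on $\chr(\kk)$ instead comes from two other places. First, the resonance vanishing $\mc{R}\bigl(\D^{n-1}U,\D^{2n-4}U\bigr)=\{0\}$ --- i.e. the absence of decomposable elements in $\ker(\mu_1)$ for the Wahl map $\mu_1$ --- holds only when $p=0$ or $p\geq n$ (Lemma~\ref{lem:fdiml-Weyman}). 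Second, the Koszul-module vanishing Theorem~\ref{thm:vanishing-koszul} itself needs $p=0$ or $p\geq n-2$. Taking $n=i+3$ and $i=\lfloor\frac{g-3}{2}\rfloor$ (supplied by the Gorenstein duality $b_{\lfloor g/2\rfloor,1}=b_{\lfloor (g-3)/2\rfloor,2}$ of Theorem~\ref{thm:shape-res-R}), the binding constraint $p\geq i+3$ is precisely $p\geq\frac{g+2}{2}$. Without locating the bound correctly, you could not confirm that the proposed argument achieves the stated range rather than a weaker one.
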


As a consequence of this result we give a new proof of Voisin's Theorem \cite{V02, V05} on the Generic Green Conjecture in characteristic zero, as well as an effective bound, in terms of the genus of the curve for the characteristics in which the Generic Green Conjecture holds.

\begin{thm}
\label{thm:genericGreen}
Let $g\geq 3$ and suppose that $\chr(\kk)=0$ or $\chr(\kk)\geq \frac{g+2}{2}$. For a general curve $C$ of genus $g$ we have that $K_{\lfloor \frac{g}{2}\rfloor,1}(C,\omega_C) = 0$, and $C$ verifies Green's Conjecture.
\end{thm}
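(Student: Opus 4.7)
The plan is to deduce Theorem~\ref{thm:genericGreen} from Theorem~\ref{thm:vanishing-tangential} by a degeneration-and-semicontinuity argument. Since the general curve $C$ of genus $g$ has Clifford index $\lfloor (g-1)/2\rfloor$, Green's Conjecture for such a $C$ is equivalent, by the reductions already recalled in the introduction, to the single vanishing
\[
K_{\lfloor g/2\rfloor,1}(C,\omega_C)=0,
\]
and the task is therefore to establish this vanishing for a generic smooth curve $C$ of genus $g$.

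First, let $C_0\subset\PP^{g-1}$ be a general hyperplane section of the tangential variety $\T$; it is a canonically embedded $g$-cuspidal rational curve, and the Lefschetz-type isomorphism $K_{i,j}(\T,\OO_\T(1))\cong K_{i,j}(C_0,\omega_{C_0})$ combined with Theorem~\ref{thm:vanishing-tangential} yields $K_{\lfloor g/2\rfloor,1}(C_0,\omega_{C_0})=0$. Second, I would produce a flat family $\pi\colon\mathcal{C}\to B$ over a smooth one-dimensional base $B$ with a distinguished point $0\in B$, special fiber $\pi^{-1}(0)=C_0$, and smooth generic fiber of geometric genus $g$. Such a smoothing exists because a plane cusp is an unobstructed hypersurface singularity whose joint versal deformation with the other cusps has a nonempty smoothing component; one can arrange $\mathcal{C}$ to be regular so that the relative dualizing sheaf $\omega_{\mathcal{C}/B}$ is a line bundle restricting to $\omega_{C_t}$ on every (Gorenstein) fiber. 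Since the Euler characteristics $\chi(C_t,\omega_{C_t}^{\otimes j})$ are constant in $t$ and $H^1(C_t,\omega_{C_t}^{\otimes j})=0$ for $j\geq 2$, Grauert's theorem shows that $\pi_*\omega_{\mathcal{C}/B}^{\otimes j}$ is locally free of the Riemann--Roch rank and compatible with base change for $j=0,1,2$.

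Consequently, the three-term Koszul complex computing $K_{\lfloor g/2\rfloor,1}(C_t,\omega_{C_t})$ globalizes to a complex of vector bundles on $B$, whose middle cohomology is upper semicontinuous in $t$. The vanishing at $t=0$ therefore propagates to a Zariski-open neighborhood, giving $K_{\lfloor g/2\rfloor,1}(C,\omega_C)=0$ for a general smooth curve of genus~$g$, and hence Green's Conjecture for such a curve. \textbf{Main obstacle.} With Theorem~\ref{thm:vanishing-tangential} absorbed as a black box, the only step that demands genuine care is the semicontinuity argument in positive characteristic: one must ensure both that the smoothing of $C_0$ exists and that $\pi_*\omega_{\mathcal{C}/B}^{\otimes j}$ commutes with base change away from characteristic zero. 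Both assertions rest on local, characteristic-independent properties of the Gorenstein plane cusp, so the entire argument goes through under precisely the hypothesis on $\chr(\kk)$ already imposed in Theorem~\ref{thm:vanishing-tangential}.
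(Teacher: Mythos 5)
Your proposal is correct and rests on the same core idea as the paper (degenerate to a $g$-cuspidal rational hyperplane section of $\T$ and propagate the vanishing by semicontinuity), but the implementation differs in two substantive ways.

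The paper works globally over the moduli stack $\mmp$ of pseudo-stable curves, constructed so that cuspidal and smooth curves coexist in a single irreducible Deligne--Mumford stack, together with the open substack $\cM_g^{\sharp}$ of irreducible non-hyperelliptic pseudo-stable curves on which $\omega_C$ is very ample. Over $\cM_g^{\sharp}$ it builds vector bundles $\mc{A}$, $\mc{B}$ and a morphism $\a:\mc{A}\to\mc{B}$ whose fiber at $[C]$ is the map in (\ref{koszint}), so that $K_{i+1,1}(C,\omega_C)=\ker(\a_C)$; the non-vanishing locus is then a degeneracy locus, manifestly closed, and the theorem follows because its complement contains a cuspidal point. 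You instead construct an ad hoc one-parameter smoothing of $C_0$ and use upper semicontinuity of the middle cohomology of the three-term Koszul complex. Both routes are valid. The kernel description has the advantage of making openness visible at a glance; the three-term complex requires the (correct, but slightly less standard) observation that $\dim(\ker\psi_t/\mathrm{im}\,\phi_t)=\mathrm{rk}(B)-\mathrm{rk}\psi_t-\mathrm{rk}\phi_t$ is upper semicontinuous because each rank is lower semicontinuous. The stack-theoretic setup in the paper also absorbs the smoothability question entirely, whereas you need the deformation-theoretic input (unobstructedness of planar singularities, surjectivity of the local-to-global map, regularity of the total space) to produce $\mathcal{C}\to B$; your sketch is correct but terse, and it silently invokes irreducibility of $\mathcal{M}_g$ to pass from one nearby smooth fiber to the general curve — say so explicitly. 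Finally, the paper restricts to $\chr(\kk)\neq 2,3$ because $\mmp$ fails to be Deligne--Mumford in those characteristics and disposes of the residual cases $g=3,4$ in $\chr 3$ directly; your family-based argument bypasses $\mmp$ and so may not need this care, but it would be worth a sentence confirming that your deformation-theoretic inputs are characteristic-independent in the relevant range.
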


Early on, Schreyer \cite{Sch1} observed that Green's Conjecture fails for some positive characteristics, for instance for genus $7$ and characteristics $2$, or genus $9$ and characteristics $3$. More recently, Eisenbud and Schreyer \cite[Conjecture 0.1]{ES} predicted that Green's Conjecture should hold for general curves of genus $g$ whenever $\chr(\kk)\geq \frac{g-1}{2}$. A version of Green's Conjecture involving arbitrary smooth curves in positive characteristic has been put forward recently by Bopp and Schreyer \cite{BS}.

\vskip 3pt

Our Theorem~\ref{thm:genericGreen} gives an almost complete answer to the Eisenbud--Schreyer conjecture and leaves at most one characteristic open. If $2\leq \chr(\kk)\leq \frac{g+1}{2}$ then the tangential variety $\T$ is contained in a rational normal scroll of codimension at least $\lfloor \frac{g}{2}\rfloor$. Therefore $K_{\lfloor \frac{g}{2} \rfloor,1}(\mathcal{T}, \OO_{\mathcal{T}}(1))\neq 0$ (see Remark~\ref{rem:T-on-scroll}) and the same non-vanishing holds for the general hyperplane section $C$. This shows that cuspidal curves cannot be used to prove the Eisenbud--Schreyer conjecture when  $\frac{g-1}{2}\leq\chr(\kk)\leq \frac{g+1}{2}$. One key ingredient in our proof of Theorem~\ref{thm:vanishing-tangential} comes from a solid understanding of the Hilbert function of finite length Koszul modules, as explained next.

\vskip 4pt

\noindent {\bf Koszul modules.} Suppose that $V$ is an $n$-dimensional $\kk$-vector space and fix a subspace $K\subseteq \bigwedge^2 V$ with $\dim(K)=m$. We denote by $S:=\mbox{Sym }V$ the symmetric algebra over $V$ and consider the Koszul complex resolving the residue field $\kk$:
$$\cdots \longrightarrow \bigwedge^3 V\otimes S\stackrel{\delta_3}\longrightarrow \bigwedge^2 V\otimes S\stackrel{\delta_2}\longrightarrow V\otimes S\stackrel{\delta_1}\longrightarrow S.$$
Truncating this complex to the last three terms, and restricting $\delta_2$ along the inclusion $\iota:K\hookrightarrow \bw^2V$ we obtain a $3$-term complex
\begin{equation}\label{eqn:W}
\xymatrixcolsep{5pc}
\xymatrix{
K \oo S \ar[r]^{\delta_2|_{K \oo S}} & V\oo S \ar[r]^{\delta_1} & S.
}
\end{equation}
Following \cite[\S 2]{PS15} as well as \cite{koszul-groups}, we define the {\em Koszul module} associated to the pair $(V,K)$ to be the middle homology of the complex (\ref{eqn:W}). We make the convention that $K$ is placed in degree zero, so that $W(V,K)$ is a graded $S$-module generated in degree zero. Using the exactness of the Koszul differentials  $\delta_{i,q}:\bigwedge^i V\otimes \Sym^qV \rightarrow \bigwedge^{i-1} V\otimes \Sym^{q+1} V$, we have the following alternative identification of the $q$-th graded piece of the Koszul module $W_q(V,K)=\mbox{Ker}(\delta_{1,q+1})/\delta_{2,q}\bigl(K\otimes \Sym^qV)$:

\begin{equation}
\label{eqnWqDef}
W_q(V,K)=\mathrm{Coker}\Bigl(\bigwedge^3V\otimes \Sym^{q-1}V\lra \frac{\bigwedge^2V}{K}\otimes \Sym^qV\Bigr) \cong \frac{\bigwedge^2 V\otimes \Sym^qV}{K\otimes \Sym^q V+\mbox{Ker}(\delta_{2,q})}.
\end{equation}

\vskip 3pt

The formation of the Koszul module $W(V,K)$ is natural in the following sense. An inclusion $K \subseteq K'$ induces a surjective morphism of graded $S$-modules
\begin{equation}
\label{eq:wnat}
W(V,K) \twoheadrightarrow W(V,K'),
\end{equation}
that is, \emph{bigger} subspaces $K\subseteq\bw^2 V$ correspond to \emph{smaller} Koszul modules. For instance, we have that $W(V,K)=0$ if and only if $K=\bw^2V$. We shall be interested more generally in studying Koszul modules that are finite dimensional as $\kk$-vector spaces, that is, those that satisfy $W_q(V,K)=0$ for $q\gg 0$. Since $W(V,K)$ is generated in degree zero, the vanishing $W_q(V,K)=0$ for some $q\geq 0$ implies that $W_{q'}(V,K)=0$ for all $q'\geq q$.

\vskip 4pt

We write $\iota^{\vee}:\bw^2 V^{\vee}\onto K^{\vee}$ for the dual to the inclusion $\iota$, and let $K^\perp:=\ker(\iota^{\vee})\subseteq \bigwedge^2 V^{\vee}$. It is shown in \cite[Lemma 2.4]{PS15} that the set-theoretic support of $W(V,K)$ in the affine space $V^\vee$ is given by the \emph{resonance variety} $\mc{R}(V,K)$, defined as
\begin{equation}
\label{eq:defr}
\mathcal R(V,K):=\left\{a\in V^\vee :  \mbox{ there exists }b\in V^\vee \mbox{ such that } a\wedge b\in K^\perp\setminus \{0\} \right\}\cup \{0\}.
\end{equation}
In particular, $W(V,K)$ has finite length if and only $\mc{R}(V,K)=\{0\}$. In view of (\ref{eq:defr}), this last condition is equivalent to the fact that the linear subspace $\bb{P}K^{\perp}\subseteq\bb{P}(\bw^2 V^{\vee})$ is disjoint from the Grassmann variety $$\GG:=\Gr_2(V^{\vee})$$ in its Pl\"ucker embedding, which can happen only when $m=\op{codim}(\bb{P}K^{\perp})> \dim (\GG)=2n-4$. Summarizing, we have the following equivalences:
\begin{equation}
\label{eq:mainequiv}
\mathbb P(K^\perp)\cap\GG=\emptyset \Longleftrightarrow \mathcal R(V,K)= \{0\}
\Longleftrightarrow \dim_{\kk}W(V,K) < \infty.
\end{equation}
Moreover, if the equivalent statements in (\ref{eq:mainequiv}) hold, then $m\geq 2n-3$.

\vskip 4pt

We provide a sharp effective bound for the vanishing of the graded components of a finite length Koszul module, extending the main result from \cite{koszul-groups} to (sufficiently) positive characteristics. Recalling that $n=\dim_{\kk}(V)$, we prove the following:

\begin{thm}
\label{thm:vanishing-koszul}
Suppose that $n\geq 3$. If $\chr(\kk)=0$ or $\chr(\kk)\geq n-2$, then we have the equivalence
\begin{equation}\label{eq:vanishing-koszul}
\mc{R}(V,K) = \{0\} \Longleftrightarrow W_q(V,K) = 0\mbox{ for }q\geq n-3.
\end{equation}
\end{thm}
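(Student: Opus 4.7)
The reverse implication in (\ref{eq:vanishing-koszul}) is immediate: if $W_q(V,K)=0$ for $q\geq n-3$, then $W(V,K)$ is a finite-dimensional $\kk$-vector space, and the equivalences in (\ref{eq:mainequiv}) give $\mc{R}(V,K)=\{0\}$. The substance of the theorem is therefore the forward implication, which we attack by reinterpreting the resonance hypothesis geometrically on $\GG=\Gr_2(V^\vee)$. By (\ref{eq:mainequiv}), $\mc{R}(V,K)=\{0\}$ is equivalent to $\mathbb{P}(K^\perp)\cap \GG=\emptyset$, which under the Pl\"ucker embedding is in turn equivalent to the map of sheaves
$$K\otimes\OO_\GG\;\lra\;\OO_\GG(1),$$
obtained by restricting the tautological surjection $\bw^2 V\otimes \OO_\GG\twoheadrightarrow \OO_\GG(1)$, being surjective. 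Its kernel $\mc{M}_K$ is then a vector bundle of rank $m-1$ on $\GG$ fitting into $0\to \mc{M}_K\to K\otimes\OO_\GG\to \OO_\GG(1)\to 0$.

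The plan is to express $W_q(V,K)$ as a cohomology group on $\GG$ that can be controlled by Bott-style vanishing. The starting point is the identification $\Sym^q V=H^0(\GG,\Sym^q\mc{S}^\vee)$, where $\mc{S}^\vee$ is the dual of the tautological rank-$2$ subbundle; together with the universal Pl\"ucker surjection this realizes the numerator $\bw^2 V\otimes\Sym^q V$ and the Koszul relations $\delta_3(\bw^3 V\otimes\Sym^{q-1}V)$ in the presentation (\ref{eqnWqDef}) as global sections of equivariant bundles on $\GG$. The Koszul-type resolution associated to the surjection $K\otimes\OO_\GG\twoheadrightarrow\OO_\GG(1)$,
$$0\to \bw^{m}K\otimes \OO_\GG(1-m)\to \cdots\to \bw^2K\otimes \OO_\GG(-1)\to K\otimes\OO_\GG\to \OO_\GG(1)\to 0,$$
twisted by $\Sym^q\mc{S}^\vee$, can then be broken into short exact sequences, and the resulting long exact sequences translate the vanishing of $W_q(V,K)$ into the vanishing of cohomology groups of the form $H^j(\GG,\bw^i K\otimes \Sym^q\mc{S}^\vee\otimes \OO_\GG(1-i))$ for ranges of $(i,j)$ determined by $q$.

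All that remains is a Bott-type vanishing input for these homogeneous bundles on $\GG$. In characteristic zero this is exactly the Borel--Weil--Bott theorem, which computes the cohomology of each $\Sym^q\mc{S}^\vee\otimes \OO_\GG(c)$ as a single irreducible $GL(V)$-module (or zero), and one reads off that the relevant groups vanish precisely in the regime $q\geq n-3$. The main obstacle will be extending this to positive characteristic, where Bott vanishing can genuinely fail and the Schur functors $\Sym^q\mc{S}^\vee$ no longer decompose cleanly. Here the argument must rest on Kempf's vanishing theorem for $H^{>0}$ of dominant line bundles on the full flag variety, together with a careful tracking of the Weyl and Schur modules that appear in the spectral sequences. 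Since the partitions entering the computation have at most two rows of length bounded by $n-2$, the hypothesis $\chr(\kk)\geq n-2$ emerges as the precise threshold keeping these auxiliary representations well-behaved; it is also at this stage that the characteristic-free form of Hermite reciprocity for $\sl_2$-representations, singled out in the abstract as a key technical ingredient, should be decisive in matching up the resulting cohomology with the combinatorial description of $W_q(V,K)$.
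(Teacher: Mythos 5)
Your overall strategy matches the paper's: reinterpret $\mc{R}(V,K)=\{0\}$ as base-point-freeness of the evaluation $K\otimes\mc{O}_{\GG}\to\mc{O}_{\GG}(1)$ on the Grassmannian, tensor the resulting Koszul resolution of $\mc{O}_{\GG}(1)$ by $\Sym^{n-3}\mc{Q}$, and then kill the higher cohomology appearing in the spectral sequence via a characteristic-$p$ version of Bott's theorem. That is exactly what Section~\ref{sec:fin-length-koszul} of the paper does; your ``break into short exact sequences'' is formally the hypercohomology spectral sequence the paper uses.

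However, you attribute a decisive role to the characteristic-free Hermite reciprocity for $\sl_2$-representations, and this is a genuine misconception. Hermite reciprocity enters only in Section~\ref{sec:repth}, where it is used to match the syzygies of the tangent developable with graded pieces of Weyman modules. The proof of Theorem~\ref{thm:vanishing-koszul} never leaves the Grassmannian/flag-variety world and never invokes two-dimensional $\sl_2$-representations; it is a self-contained statement about $\Gr_2(V^{\vee})$ for an arbitrary $n$-dimensional $V$.

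You also understate the content of the Bott-type vanishing step. The weights entering the computation, $\lambda=(n-2-r,1-r,0^{n-2})$ for $r\geq 2$, are \emph{not} dominant (already $\lambda_2=1-r<0=\lambda_3$), so Kempf vanishing by itself says nothing about them. The heart of the argument is Lemma~\ref{lem:coh-vanishing}, which repeatedly applies the characteristic-$p$ Bott-algorithm step (Proposition~\ref{prop:jantzen-cohom}(b), valid only under $\lambda_i-\lambda_{i+1}\leq p-1$) to shuffle the weight into a position where Proposition~\ref{prop:jantzen-cohom}(a) gives vanishing. The threshold $\chr(\kk)\geq n-2$ arises because the largest gap $\lambda_i-\lambda_{i+1}$ encountered in this chain is $n-3$; it is not accurately described as a bound on ``partitions with at most two rows of length bounded by $n-2$'' as you suggest.
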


Experiments in small characteristics suggest that the assumption $\chr(\kk)\geq n-2$ is probably necessary in order for (\ref{eq:vanishing-koszul}) to hold, but we do not know this in general. As the next theorem shows, the vanishing range $q\geq n-3$ is on the other hand optimal, since $W_{n-4}(V,K)\neq 0$ when $m=2n-3$ and $n\geq 4$.

\begin{thm}
\label{thm:boundHilb}
Suppose $\chr(\kk)=0$ or $\chr(\kk)\geq n-2$, and fix a subspace $K\subseteq \bigwedge ^2 V$.  If $\mathcal{R}(V,K) = \{0\}$, then
\[
\dim\, W_q(V,K) \leq {n+q-1 \choose q} \frac{(n-2)(n-q-3)}{q+2} \quad\mbox{ for }q=0, \ldots, n-4.
\]
Moreover, equality holds for all $q$ if $\dim(K)=2n-3$.
\end{thm}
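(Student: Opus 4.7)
The strategy is to interpret $W_q(V,K)$ as a sheaf cohomology group on the Grassmannian $\GG = \Gr_2(V^\vee)$, use monotonicity to reduce the general inequality to the case $\dim K = 2n-3$, and then compute the minimal case via Borel--Weil--Bott on $\GG$.

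\textbf{Reduction to the minimal case.} Given $K$ with $\mc R(V,K) = \{0\}$ and $\dim K \geq 2n-3$, I construct $K_0 \subseteq K$ with $\dim K_0 = 2n-3$ and $\mc R(V,K_0) = \{0\}$. Equivalently, I extend $K^\perp$ (disjoint from the affine cone $\widehat{\GG}$ over $\GG$ inside $\bw^2 V^\vee$) to a linear subspace $K_0^\perp$ of dimension $\binom{n-2}{2}$ still disjoint from $\widehat{\GG}$. I argue by induction on $\binom{n-2}{2} - \dim K^\perp$: whenever $\dim K^\perp < \binom{n-2}{2}$, the subvariety $\widehat{\GG} + K^\perp \subseteq \bw^2 V^\vee$ has dimension at most $(2n-3) + \dim K^\perp < \binom{n}{2}$, so any vector $v \in \bw^2 V^\vee$ outside $\widehat{\GG} + K^\perp$ enlarges $K^\perp$ by one dimension while keeping it disjoint from $\widehat{\GG}$. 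Then monotonicity (\ref{eq:wnat}) yields a surjection $W_q(V,K_0) \twoheadrightarrow W_q(V,K)$, reducing the theorem to the minimal case.

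\textbf{Computation in the minimal case.} Let $\mathcal S$ denote the tautological rank-$2$ subbundle on $\GG$, so that $\det \mathcal S^\vee = \OO_\GG(1)$. The hypothesis $\mc R(V,K_0) = \{0\}$ is equivalent to surjectivity of the evaluation $K_0 \otimes \OO_\GG \to \OO_\GG(1)$, producing the short exact sequence
\[ 0 \to \cM_{K_0} \to K_0 \otimes \OO_\GG \to \OO_\GG(1) \to 0 \]
with $\cM_{K_0}$ locally free of rank $2n-4$ and $\det \cM_{K_0} = \OO_\GG(-1)$. Tensoring with $\Sym^q \mathcal S^\vee$ and noting $\Sym^q \mathcal S^\vee \otimes \OO_\GG(1) \cong \mathbb S_{(q+1,1)} \mathcal S^\vee$, Borel--Weil--Bott (valid under $\chr(\kk) \geq n-2$) gives $H^0(\GG, \Sym^q \mathcal S^\vee) = \Sym^q V$ and $H^0(\GG, \mathbb S_{(q+1,1)} \mathcal S^\vee) = \mathbb S_{(q+1,1)} V$, with all higher cohomology vanishing. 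The long exact sequence thus collapses to
\[ 0 \to H^0(\GG, \cM_{K_0} \otimes \Sym^q \mathcal S^\vee) \to K_0 \otimes \Sym^q V \xrightarrow{\phi} \mathbb S_{(q+1,1)} V \to H^1(\GG, \cM_{K_0} \otimes \Sym^q \mathcal S^\vee) \to 0, \]
with $H^i(\GG, \cM_{K_0} \otimes \Sym^q \mathcal S^\vee) = 0$ automatically for $i \geq 2$. The map $\phi$ is the restriction of the Koszul differential $\delta_{2,q}$ to $K_0 \otimes \Sym^q V$, landing in $\ker(\delta_{1,q+1}) \cong \mathbb S_{(q+1,1)} V$. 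By (\ref{eqnWqDef}) one identifies $W_q(V,K_0) \cong H^1$, and a short Euler-characteristic calculation using $\dim \mathbb S_{(q+1,1)} V = \binom{n+q-1}{q}(n-1)(n+q)/(q+2)$ yields $\dim W_q(V,K_0) = \binom{n+q-1}{q}(n-2)(n-q-3)/(q+2) + \dim H^0$. Both the equality claim and the sharp upper bound therefore reduce to the single vanishing $H^0(\GG, \cM_{K_0} \otimes \Sym^q \mathcal S^\vee) = 0$.

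\textbf{The main obstacle: the $H^0$ vanishing.} This vanishing asserts that $\im(\delta_{3,q-1}) \cap (K_0 \otimes \Sym^q V) = \{0\}$ inside $\bw^2 V \otimes \Sym^q V$ for every $K_0$ of dimension $2n-3$ with $\mc R(V,K_0) = \{0\}$. Geometrically, the resonance condition makes the linear projection $\pi: \GG \to \mathbb P(K_0^\vee) = \mathbb P^{2n-4}$ a finite flat morphism of smooth varieties of equal dimension, under which $\cM_{K_0}$ is (up to a twist) the pullback of the Euler-sequence quotient on $\mathbb P^{2n-4}$; the desired vanishing then follows from a cohomology calculation on $\mathbb P^{2n-4}$ combined with the projection formula for $\pi_*(\Sym^q \mathcal S^\vee)$. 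The characteristic hypothesis $\chr(\kk) \geq n-2$ is used here, and only here, to keep all dominant weights appearing in the relevant twists of $\mathcal S$ and $\mathcal Q$ within the Kempf-safe regime so that the Borel--Weil--Bott vanishings used throughout remain valid. This is the same mechanism that underlies Theorems~\ref{thm:vanishing-tangential}, \ref{thm:genericGreen}, and \ref{thm:vanishing-koszul}, and it is the delicate point separating the positive-characteristic proof from the characteristic-zero case, where these vanishings are automatic.
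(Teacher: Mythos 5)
Your reduction to $\dim K = 2n-3$ is correct: the general-position argument (enlarging $K^\perp$ one vector at a time while staying off the affine cone over $\GG$, using that $\widehat{\GG}+K^\perp$ has dimension at most $(2n-3)+\dim K^\perp<\binom{n}{2}$) works, and (\ref{eq:wnat}) then reduces the inequality to the equality case. The cohomological reformulation is also correct: setting $\mc{M}_{K_0}=\ker(K_0\oo\OO_{\GG}\to\OO_{\GG}(1))$, one gets $W_q(V,K_0)\cong H^1(\GG,\mc{M}_{K_0}\oo\Sym^q\mc{Q})$, the long exact sequence together with Kempf vanishing of $\bb{S}_{(q+1,1)}\mc{Q}$ kills $H^i$ for $i\geq 2$, and the Euler-characteristic computation yields exactly the stated bound once $H^0=0$ is known. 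You have correctly isolated the only remaining issue.

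The gap is precisely the step you flagged, and your sketch for it does not close. After passing to $\PP^{2n-4}$ via the finite flat projection $\pi$ and applying the projection formula, the vanishing of $H^0(\PP^{2n-4},\Omega^1_{\PP}(1)\oo\pi_*\Sym^q\mc{Q})$ is, by the twisted Euler sequence, equivalent to injectivity of $H^0(\OO_{\PP}(1))\oo H^0(\pi_*\Sym^q\mc{Q})\to H^0(\pi_*\Sym^q\mc{Q}\oo\OO_{\PP}(1))$, which unwinds to the very same map $K_0\oo\Sym^q V\to\bb{S}_{(q+1,1)}V$ one started with. So the "cohomology calculation on $\PP^{2n-4}$" is circular: without controlling the bundle $\pi_*\Sym^q\mc{Q}$, which depends on $K_0$ and is not a priori a sum of line bundles with the right twists, nothing is gained.

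A correct and short argument for $H^0=0$ is available and stays within the paper's toolkit. Observe that $M:=\ker\bigl(\delta_2|_{K_0\oo S}\bigr)=(K_0\oo S)\cap\ker(\delta_2)$ is a graded submodule of the free, hence torsion-free, $S$-module $K_0\oo S$. If $0\neq m\in M_q$ for some $q\leq n-4$, then for any nonzero $f\in S_{n-3-q}$ we would have $0\neq fm\in M_{n-3}$. So it suffices to prove $M_{n-3}=0$. By Theorem~\ref{thm:vanishing-koszul}, $W_{n-3}(V,K_0)=0$, i.e.\ $\delta_{2,n-3}$ maps $K_0\oo\Sym^{n-3}V$ \emph{onto} $\ker(\delta_{1,n-2})\cong\bb{S}_{(n-2,1)}V$. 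Both spaces have dimension $(2n-3)\binom{2n-4}{n-3}$, so this surjection is an isomorphism, and $M_{n-3}=0$ follows. Note in particular that the characteristic hypothesis is used here through Theorem~\ref{thm:vanishing-koszul} (and not, as your final remark suggests, "only" in the $H^0$ vanishing itself); indeed the entire numerology hinges on the sharp vanishing $W_{n-3}=0$.
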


The study of resonance varieties (and of the associated cohomology jumping loci) in Hodge theory has been initiated by Green and Lazarsfeld \cite{GL91}, and it has been actively pursued in the theory of hyperplane arrangements and topology. We refer to \cite{DPS}, \cite{PS15} and the references therein for an overview, and for the connection between topological invariants of groups and Koszul modules. Theorem \ref{thm:boundHilb} yields upper bounds for these invariants in a purely algebraic context, as explained in detail in \cite{koszul-groups}.

\vskip 4pt

\noindent  {\bf The Cayley-Chow form of the Grassmannian of lines.}
Theorem \ref{thm:vanishing-koszul} can be reformulated geometrically as follows. Recall that one defines the \emph{Cayley--Chow form} of the Grassmannian
$\GG\subseteq \mathbb P(\bigwedge^2V^\vee )$ in its Pl\"ucker embedding as the locus of linear subspaces of codimension $(2n-3)$ in $\mathbb P(\bigwedge^2V^\vee )$ that meet $\GG$. In view of (\ref{eq:mainequiv}), this can be seen equivalently as the locus of $(2n-3)$-dimensional subspaces in $\bigwedge^2 V$ for which $W(V,K)$ has non-zero resonance, that is, as
$$
\mathfrak{Chow}(\GG):=\Bigl\{K\in\mathrm{Gr}_{2n-3}\bigl(\bigwedge ^2V\bigr): \mathbb P(K^\perp)\cap \GG\ne\emptyset\Bigr\}.
$$
Note that $\mathfrak{Chow}(\GG)$ is a divisor in $\Gr_{2n-3}\bigl(\bigwedge^2 V\bigr)$ of degree equal to $\mbox{deg}(\GG)$, which is known to be computed by the Catalan number $\frac{1}{n-1}\cdot{2n-4\choose n-2}$. In view of the description (\ref{eqnWqDef}) for the space $W_{n-3}(V,K)$, an equivalent reformulation of Theorem \ref{thm:vanishing-koszul} (in the case when $\mbox{dim}(K)=2n-3$) is the following:
\begin{thm}\label{chow-form}
Suppose that $\chr(\kk)=0$ or $\chr(\kk)\geq n-2$. The Cayley--Chow form $\mathfrak{Chow}(\GG)$ consists set-theoretically of those subspaces $K\subseteq \bigwedge^2 V$ with $\dim(K)=2n-3$ satisfying
$$\Bigl(K\otimes \mathrm{Sym}^{n-3}(V)\Bigr)\cap \mathrm{Ker}(\delta_{2,n-3})\neq 0.$$
\end{thm}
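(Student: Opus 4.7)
The plan is to reduce the statement to Theorem~\ref{thm:vanishing-koszul} and then match dimensions. First, I combine the chain of equivalences in (\ref{eq:mainequiv}) with Theorem~\ref{thm:vanishing-koszul}: for a subspace $K\subseteq\bw^2V$ with $\dim(K)=2n-3$, the condition $\mathbb{P}(K^\perp)\cap\GG\neq\emptyset$ is equivalent to $\mc{R}(V,K)\neq\{0\}$, which under the characteristic assumption is equivalent to $W_{n-3}(V,K)\neq 0$ (since $W_q(V,K)=0$ for $q\geq n-3$ is what Theorem~\ref{thm:vanishing-koszul} provides, and vanishing of $W_{n-3}$ forces vanishing of all higher $W_q$ because the Koszul module is generated in degree zero). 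Thus the problem is reduced to showing that, in the case $\dim(K)=2n-3$, the non-vanishing $W_{n-3}(V,K)\neq 0$ is equivalent to $(K\otimes\Sym^{n-3}V)\cap \ker(\delta_{2,n-3})\neq 0$.

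Next, I use the presentation (\ref{eqnWqDef}) with $q=n-3$, namely
\[
W_{n-3}(V,K)\;=\;\frac{\bw^2V\otimes\Sym^{n-3}V}{(K\otimes\Sym^{n-3}V)+\ker(\delta_{2,n-3})}.
\]
Write $A:=K\otimes\Sym^{n-3}V$, $B:=\ker(\delta_{2,n-3})$ and $X:=\bw^2V\otimes\Sym^{n-3}V$. Then $W_{n-3}(V,K)=0$ iff $A+B=X$, while the condition to be matched is $A\cap B=0$. By the inclusion–exclusion identity $\dim(A+B)+\dim(A\cap B)=\dim A+\dim B$, the two conditions are simultaneously equivalent precisely when $\dim A+\dim B=\dim X$.

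The crux is therefore the dimension identity
\[
(2n-3)\cdot\dim\Sym^{n-3}V\;+\;\dim\ker(\delta_{2,n-3})\;=\;\dim\bw^2V\cdot\dim\Sym^{n-3}V.
\]
I would compute $\dim\ker(\delta_{2,n-3})$ from the Koszul resolution of $\kk$, whose internal-degree $(n-1)$ strand is an exact sequence
\[
0\lra\bw^{n-1}V\lra\bw^{n-2}V\otimes\Sym^1V\lra\cdots\lra\bw^2V\otimes\Sym^{n-3}V\stackrel{\delta_{2,n-3}}{\lra}V\otimes\Sym^{n-2}V\lra\Sym^{n-1}V\lra 0.
\]
From the last three terms, $\dim\ker(\delta_{2,n-3})=\dim X - n\binom{2n-3}{n-2}+\binom{2n-2}{n-1}$, and substituting $\binom{n}{2}-(2n-3)=\frac{(n-2)(n-3)}{2}$ reduces the required identity to the elementary binomial relation
\[
(2n-3)\binom{2n-4}{n-3}\;=\;n\binom{2n-3}{n-2}-\binom{2n-2}{n-1},
\]
which one verifies directly. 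Since this Koszul exactness and the binomial identity are characteristic-free, the equivalence then follows at once from Theorem~\ref{thm:vanishing-koszul}.

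The main obstacle is really nothing beyond bookkeeping: the only substantive input is Theorem~\ref{thm:vanishing-koszul}, which supplies the sharp vanishing degree $q=n-3$, while the rest is a linear algebra calculation that crucially uses the fact that $\dim(K)=2n-3$ (the boundary case where $A$ and $B$ are forced to be complementary rather than merely having nontrivial intersection or failing to span).
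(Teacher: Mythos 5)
Your proposal is correct, and it takes essentially the same approach the paper implicitly relies on: the paper simply asserts that Theorem~\ref{chow-form} is ``an equivalent reformulation of Theorem~\ref{thm:vanishing-koszul}'' in view of (\ref{eqnWqDef}), without writing out the linear algebra. Your observation that $\dim(K\otimes\Sym^{n-3}V)+\dim\ker(\delta_{2,n-3})=\dim(\bw^2V\otimes\Sym^{n-3}V)$ when $\dim(K)=2n-3$ (verified via the degree $n-1$ strand of the Koszul complex and the binomial identity $(2n-3)\binom{2n-4}{n-3}=n\binom{2n-3}{n-2}-\binom{2n-2}{n-1}$) is exactly the dimension coincidence that makes the condition ``$A+B\neq X$'' from (\ref{eqnWqDef}) equivalent to ``$A\cap B\neq 0$'' in the theorem statement, and hence supplies the justification the paper leaves to the reader.
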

In other words, the Cayley--Chow form of $\GG$ is the pull-back of the divisorial Schubert cycle defined by the Koszul space $\mbox{Ker}\bigl(\delta_{2,n-3}:\bigwedge^2 V\otimes \mbox{Sym}^{n-3} V\rightarrow V\otimes \mbox{Sym}^{n-2} V\bigr)$ under the
map $$\Gr_{2n-3}\bigl(\bigwedge^2 V\bigr)\rightarrow \Gr_{(2n-3){2n-4\choose n-1}}\Bigl(\bigwedge^2 V\otimes \mbox{Sym}^{n-3} V \Bigr), \ \mbox{ } \ \ \ K\mapsto K\otimes \mbox{Sym}^{n-3} V.$$

\vskip 5pt

Returning to the Generic Green Conjecture, our new approach and the proof of Theorem~\ref{thm:vanishing-tangential} is based on exhibiting a close relationship between the syzygies of the tangential variety $\mc{T}$ and the graded components of a specific finite length Koszul module,  whose study was initiated in \cite[Section~3.I.B]{E91}, and which is called a \emph{Weyman module}. This relationship lies at the heart of this paper, and we explain it in the remaining part of the Introduction.

\vskip 4pt

\noindent {\bf The syzygies of the tangential variety of the rational normal curve.} We assume for the rest of the Introduction that $\chr(\kk)\neq 2$. We prove in Theorem~\ref{thm:shape-res-R} that the homogeneous coordinate ring of $\mc{T}$ is Gorenstein, with Castelnuovo--Mumford regularity $3$. In other words, the dimensions $b_{i,j}$ of the Koszul cohomology groups $K_{i,j}\bigl(\mc{T},\OO_{\mc{T}}(1)\bigr)$ are encoded in a table having the following shape:
\begin{equation}\label{eq:betti-T}
\begin{array}{c|ccccccc}
     &0&1&2&\cdots&g-4&g-3&g-2 \\ \hline
     0&1&-&-&\cdots&-&-&-\\
     1&-&b_{1,1}&b_{2,1}&\cdots&b_{g-4,1}&b_{g-3,1}&-\\
     2&-&b_{1,2}&b_{2,2}&\cdots&b_{g-4,2}&b_{g-3,2}&-\\
     3&-&-&-&\cdots&-&-&1\\
\end{array}
\end{equation}
where a dash in position $(i,j)$ indicates the vanishing of the corresponding Koszul cohomology group. We prove in Theorem~\ref{thm:nonvanishing-kij} that:
\begin{thm}\label{thm:vanish-ki2}
 If $p=\chr(\kk)$ satisfies $p=0$ or $p\geq \frac{g+2}{2}$, then
 \[b_{i,2} \neq 0 \quad\mbox{ if and only if }\quad \frac{g-2}{2} \leq i \leq g-3.\]
\end{thm}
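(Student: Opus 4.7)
The plan is to combine three ingredients: the Gorenstein self-duality of the coordinate ring of $\mc{T}$ (from Theorem~\ref{thm:shape-res-R}), the vanishing Theorem~\ref{thm:vanishing-tangential}, and a complementary non-vanishing result for the left half of the linear strand. By Theorem~\ref{thm:shape-res-R} the Betti table of $\mc{T}$ has the shape (\ref{eq:betti-T}) with regularity $3$ and a Gorenstein symmetry $b_{i,j}=b_{g-2-i,\,3-j}$; in particular $b_{i,2}=b_{g-2-i,1}$. So the theorem is equivalent to
\[
b_{j,1}\neq 0\quad\Longleftrightarrow\quad 1\le j\le \left\lfloor \tfrac{g}{2}\right\rfloor-1,
\]
and it suffices to establish the corresponding statements for the linear strand of $\mc{T}$.

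For the vanishing half, Theorem~\ref{thm:vanishing-tangential} supplies $b_{\lfloor g/2\rfloor,1}=0$, and the upward-closedness of vanishing in the linear strand of a Cohen--Macaulay graded ring (in this Gorenstein setting also forced by duality against the row $b_{g-2,3}=1$, which makes the $j=1$ row end at position $g-3$) gives $b_{j,1}=0$ for every $j\ge\lfloor g/2\rfloor$. Dualizing yields $b_{i,2}=0$ whenever $i\le g-2-\lfloor g/2\rfloor=\lceil g/2\rceil-2$, which is precisely the desired vanishing $b_{i,2}=0$ for $i<(g-2)/2$.

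For the non-vanishing of $b_{j,1}$ in the range $1\le j\le \lfloor g/2\rfloor-1$, I would use a scrollar syzygy construction on the general hyperplane section $C$, a canonically embedded $g$-cuspidal rational curve whose Koszul cohomology agrees with that of $\mc{T}$ via the Lefschetz-type isomorphism quoted in the introduction. Under the hypothesis on $\chr(\kk)$, the curve $C$ has gonality $d=\lceil(g+2)/2\rceil$, and a minimal base-point free pencil determines a rational normal scroll $X\subset \mathbb{P}^{g-1}$ of dimension $d-1$ containing $C$. The Eagon--Northcott complex of $X$ produces non-zero classes in $K_{p,1}(X,\OO_X(1))$ for $1\le p\le g-d-1=\lfloor g/2\rfloor-2$, which restrict faithfully to non-zero classes in $K_{p,1}(C,\omega_C)$. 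The remaining extreme position $p=\lfloor g/2\rfloor-1$ is provided by the Green--Lazarsfeld non-vanishing theorem applied to the decomposition $\omega_C=L_1+L_2$ with $L_1$ the gonality pencil and $L_2=\omega_C-L_1$: since $h^0(L_2)-1=g-d$, this yields $K_{g-d,1}(C,\omega_C)\neq 0$, exactly covering the missing index.

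The main obstacle is confirming that these non-vanishing constructions carry over faithfully in the positive-characteristic regime $\chr(\kk)\ge (g+2)/2$. In small characteristic the tangential variety $\mc{T}$ itself lies on a rational normal scroll of codimension at least $\lfloor g/2\rfloor$ (as recalled in the introduction), a pathology that shifts $b_{\lfloor g/2\rfloor,1}$ away from zero and invalidates the predicted Betti table; the hypothesis on $\chr(\kk)$ is precisely the bound needed to rule out this behavior, and to ensure both that the gonality pencil on $C$ exists with the expected numerical invariants and that the Eagon--Northcott and Green--Lazarsfeld syzygies produce bona fide non-zero Koszul classes that propagate to $\mc{T}$ under the Lefschetz comparison.
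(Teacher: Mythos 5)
Your proposal is circular in a way that cannot be repaired within the logical structure of the paper. You take Theorem~\ref{thm:vanishing-tangential} as a given ingredient for the vanishing half of the argument, but in the paper the implication runs the other way: Theorem~\ref{thm:vanishing-tangential} is \emph{derived} from Theorem~\ref{thm:vanish-ki2} (via the Gorenstein symmetry $b_{\lfloor g/2\rfloor,1}=b_{\lfloor(g-3)/2\rfloor,2}$), and no independent proof of Theorem~\ref{thm:vanishing-tangential} is given anywhere. In characteristic zero one might appeal to Voisin's theorems instead, but in the positive-characteristic range $p\geq\frac{g+2}{2}$ --- which is exactly the new content --- Theorem~\ref{thm:vanishing-tangential} has no source other than the computation you are trying to reprove. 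Your reduction to the linear strand via $b_{i,2}=b_{g-2-i,1}$ and the observation that vanishing in the linear strand of a minimal free resolution propagates upward (since if $F_p$ has no generators below degree $p+2$ then $F_{p+1}$ has none below degree $p+3$) are both correct, but they rest on an input you do not have.

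The non-vanishing half is also problematic. You need a $g^1_d$ on the cuspidal curve $C$ with $d=\lceil\frac{g+2}{2}\rceil$, but establishing that $C$ has maximal gonality in positive characteristic is not a triviality: the only argument the paper offers for this (in the proof of Theorem~\ref{thm:genericGreen}) deduces the gonality \emph{from} the vanishing $K_{\lfloor g/2\rfloor,1}(C,\omega_C)=0$ via a non-vanishing theorem, so again the chain of implications is the reverse of what you need, and the Eisenbud--Harris limit linear series argument quoted in the introduction is explicitly a characteristic-zero result. One would also need the Green--Lazarsfeld non-vanishing theorem and the scrollar/Eagon--Northcott transfer to be available in positive characteristic; these are plausible but must be justified. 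By contrast, the paper's actual proof of Theorem~\ref{thm:vanish-ki2} is self-contained and completely avoids gonality, scrolls, and curve geometry: it identifies $K_{i,2}(\mc{T},\OO_{\mc{T}}(1))$ with the graded piece $W^{(i+2)}_{g-3-i}$ of a Weyman module (Theorem~\ref{thm:Ki2=Wi+2}), determines in Lemma~\ref{lem:fdiml-Weyman} exactly when the associated resonance variety vanishes as a function of $\chr(\kk)$, and then reads off both the vanishing and the non-vanishing ranges simultaneously from Theorems~\ref{thm:vanishing-koszul} and~\ref{thm:boundHilb} (the latter gives equality in the Hilbert function bound since $\dim\D^{2(i+2)-2}U=2(i+3)-3$, so the relevant graded pieces are nonzero exactly for $0\leq g-3-i\leq i-1$). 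Theorem~\ref{thm:vanishing-tangential} is then a corollary, not a hypothesis.
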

The Gorenstein property implies that $b_{i,1} = b_{g-2-i,2}$ for all $i$, so the vanishing statement in Theorem~\ref{thm:vanishing-tangential} is equivalent to $b_{g-2-\lfloor \frac{g}{2}\rfloor,2} = 0$, which follows from Theorem~\ref{thm:vanish-ki2}. In Theorem~\ref{thm:nonvanishing-kij} we also give a similar characterization for the (non-)vanishing of $b_{i,2}$ when $3\leq p\leq\frac{g+1}{2}$, and in Section~\ref{subsec:char2} we consider $p=2$. To prove Theorem~\ref{thm:nonvanishing-kij}, we realize the groups $K_{i,2}\bigl(\mc{T},\OO_{\mc{T}}(1)\bigr)$ as graded components of Weyman modules as explained next, and then apply Theorem~\ref{thm:vanishing-koszul} and the explicit description (\ref{eq:defr}) of the set-theoretic support of a Koszul module.

\vskip 4pt

\noindent {\bf Weyman modules.}
We fix a $\kk$-vector space $U$ of dimension two, and let $\bb{P}U$ denote the projective space of one-dimensional subspaces of $U$. For each $d\geq 0$ we have $H^0(\bb{P}U,\OO_{\bb{P}U}(d)) = \Sym^d(U^{\vee})$, and we define the \defi{$d$-th divided power of $U$} by
\begin{equation}\label{eq:DdU}
\D^d U := \Bigl( \Sym^d(U^{\vee}) \Bigr)^{\vee}.
\end{equation}
If $\chr(\kk)=0$, or more generally, if all the binomial coefficients ${d\choose i}$ are invertible in $\kk$, then there exists a natural isomorphism $\D^d U \cong \Sym^d U$, as explained in Section~\ref{subsec:Sym-D}. Readers interested only in characteristic zero can use this identification throughout the paper.

\vskip 4pt

For each $d\geq 1$ we consider the \defi{Gaussian--Wahl map} associated with the line bundle $\OO_{\bb{P}U}(d)$ (see \cite{Wahl}):
\[\mu_1: \bw^2 H^0\bigl(\bb{P}U,\OO_{\bb{P}U}(d)\bigr) \lra H^0\bigl(\bb{P}U,\omega_{\bb{P}U} \oo \OO_{\bb{P}U}(2d)\bigr).\]
We choose a basis $(1,y)$ for $U^{\vee}$, so that $\Sym^d(U^{\vee})$ can be identified with the space of polynomials of degree at most $d$ in $y$. Making the identification $\omega_{\bb{P}U} \cong \OO_{\bb{P}U}(-2)$, we can rewrite $\mu_1$ explicitly as
\begin{equation}\label{eq:Wahl-map}
 \mu_1:\bw^2 \Sym^{d}(U^{\vee}) \lra \Sym^{2d-2}(U^{\vee}),\quad \mu_1(y^i \wedge y^j) = (i-j)\cdot y^{i+j-1}\mbox{ for }0\leq i,j\leq d.
\end{equation}
In characteristic zero (or sufficiently large), we have the Clebsch--Gordan decomposition
\[\bigwedge^2 \mbox{Sym}^d(U^{\vee})=\bigoplus_{j=0}^{\lfloor\frac{d-1}{2}\rfloor} \Sym^{2d-2-4j}(U^{\vee}),\]
and the map $\mu_1$ is simply the projection onto the factor $\Sym^{2d-2}(U^{\vee})$ (see \cite[Exercise~11.30]{FH}).

\vskip 3pt

Under the assumption that $\chr(\kk)\neq 2$, the Wahl map (\ref{eq:Wahl-map}) is surjective. Dualizing it, using (\ref{eq:DdU}), and writing $V^{(d)} = \D^d U$ and $K^{(d)} = \D^{2d-2}U$, we get an inclusion $\Delta_1: K^{(d)} \hookrightarrow \bw^2 V^{(d)}$, and define
\[ W^{(d)} := W\bigl(V^{(d)},K^{(d)}\bigr).\]
Following \cite{E91}, we call $W^{(d)}$ a \defi{Weyman module}. The key observation is then the following (Theorem~\ref{thm:Ki2=Wi+2}):
\begin{thm}\label{thm:intro-ki2=Wi+2}
 If $\chr(\kk)\neq 2$,  then for each $i=1,\ldots,g-3$ we have a natural identification
 \[K_{i,2}\bigl(\mc{T},\OO_{\mc{T}}(1)\bigr) = W^{(i+2)}_{g-3-i}.\]
\end{thm}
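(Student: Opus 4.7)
The plan is to identify the two vector spaces $K_{i,2}(\mc{T},\OO_{\mc{T}}(1))$ and $W^{(i+2)}_{g-3-i}$ by computing each explicitly as an $\mathrm{SL}(U)$-representation and matching the results. The comparison is not tautological: the two spaces \emph{a priori} live over different polynomial rings (namely $\Sym \D^g U$ for the former and $\Sym\D^{i+2}U$ for the latter), so a direct chain-level identification of Koszul complexes is not available, and one must pass through an intermediate description.

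First I would pin down $H^0(\OO_{\mc{T}}(j))$ for $j=1,2,3$ as explicit $\mathrm{SL}(U)$-representations. Since $\mc{T}$ is the rational surface ruled by the tangent lines to $\Gamma\cong\bb{P}U$, one has a natural parametrization $\pi\colon \tl{\mc{T}}\to\mc{T}$ with $\tl{\mc{T}}$ a suitable $\bb{P}^1$-bundle over $\bb{P}U$. A standard projection-formula computation then expresses $H^0(\OO_{\mc{T}}(j))$ as an explicit subspace of $H^0(\pi^*\OO_{\mc{T}}(j))$, cut out by the cuspidal compatibility condition along $\Gamma$ that distinguishes $\mc{T}$ from its normalization.

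Next I would rewrite the three-term Koszul complex
\[
\bw^{i+1}V^{(g)}\otimes H^0\bigl(\OO_{\mc{T}}(1)\bigr) \lra \bw^i V^{(g)}\otimes H^0\bigl(\OO_{\mc{T}}(2)\bigr) \lra \bw^{i-1}V^{(g)}\otimes H^0\bigl(\OO_{\mc{T}}(3)\bigr),
\]
whose middle homology is $K_{i,2}(\mc{T},\OO_{\mc{T}}(1))$, in a form that matches the complex presenting $W^{(i+2)}_{g-3-i}$ via (\ref{eqnWqDef}). The reorganization should be $\mathrm{SL}(U)$-equivariant: the Cartan products $\D^a U\otimes\D^b U\to\D^{a+b}U$ allow the extraction of a canonical $\Sym^{g-3-i}\D^{i+2}U$-factor, and the identification $V^{(i+2)}=\D^{i+2}U$ is then forced by degree matching. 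The subspace $K^{(i+2)}=\D^{2i+2}U\subseteq\bw^2 V^{(i+2)}$ appears dually, via the Wahl map (\ref{eq:Wahl-map}) for $d=i+2$; its surjectivity---equivalent to $\chr(\kk)\neq 2$---is exactly what is needed for the Weyman module itself to be well-defined, and it is also the source of the characteristic hypothesis in the statement.

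The main obstacle will be the $\mathrm{SL}(U)$-equivariant bookkeeping in the previous step: the plethysms $\bw^\bullet \D^g U$ are in general unwieldy, so the argument must show that after passing to cohomology only one Cartan component survives among the many isotypic pieces. This selection mechanism is likely governed by the specific cuspidal (as opposed to secant) structure along $\Gamma$, and will form the technical heart of the result; once it is in place, matching the resulting cokernel with the explicit presentation of $W^{(i+2)}_{g-3-i}$ should be a bookkeeping exercise in divided powers.
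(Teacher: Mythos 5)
Your plan identifies the right difficulty — the two modules live over different polynomial rings, so one cannot literally match Koszul complexes — but your proposed resolution of that difficulty has a genuine gap, and it is precisely the gap the paper's strategy is designed to circumvent.

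Your hope is that "the Cartan products $\D^a U\otimes\D^b U\to\D^{a+b}U$ allow the extraction of a canonical $\Sym^{g-3-i}\D^{i+2}U$-factor" and that "only one Cartan component survives among the many isotypic pieces," with the selection "governed by the specific cuspidal structure." None of this is made precise, and there is no reason to expect such an isotypic-component argument to work: the plethysms $\bw^\bullet\D^g U$ are not just "unwieldy," they are not even well-behaved in positive characteristic (where there is no complete reducibility to make "Cartan component" meaningful). The missing ingredient is not a selection rule on isotypic pieces but an explicit, characteristic-free $\sl_2$-isomorphism $\Sym^d(\D^i U) \xrightarrow{\sim} \bw^i(\Sym^{d+i-1}U)$ — the Hermite reciprocity of Section~\ref{subsec:Hermite} — which gives a literal identification, entry by entry, between the three-term complex defining $W^{(i+2)}_{g-3-i}$ and a three-term complex naturally attached to $\mc{T}$ (see the diagram~(\ref{eq:key-diagram})). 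Without this isomorphism your "bookkeeping exercise in divided powers" has no starting point.

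There is a second, related gap: you propose to compute $H^0(\OO_\mc{T}(j))$ directly and rewrite the Koszul complex of $\mc{T}$, which is exactly the ``geometric approach'' sketched in the introduction and which the authors explicitly decline to execute because the identifications are too delicate to track by hand. The paper instead applies the Kempf--Weyman technique to build four auxiliary complexes ($F_\bullet$ resolving the normalization $\tl R$, $J_\bullet$ with homology $\kk$ and $C$, $C_\bullet$ resolving the canonical module of $\widehat\Gamma$, and the Koszul complex $K_\bullet$), constructs maps $p_\bullet:J_\bullet\to K_\bullet$ and $q_\bullet:F_\bullet\to J_{\bullet+1}$, and extracts the resolution of $R$ by minimizing a mapping cone (Lemma~\ref{lem:minimization}). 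The Weyman module then appears as the cokernel $\coker(\tl q_{i+1})\oo_S\kk$ via Lemma~\ref{lem:coker-tl-q}, which is itself an application of Hermite reciprocity through the diagram~(\ref{eq:big-sym-to-wedge}). Your proposal touches none of this machinery, so even where your intuition is correct (e.g., the role of the Wahl map and of $\chr(\kk)\neq 2$), the argument as sketched does not close.
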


This result reveals a peculiar property of the Weyman module $W^{(i+2)}$, for its graded pieces are isomorphic to the Koszul cohomology groups $K_{i,2}$ of generic curves of \emph{varying} genera!  The analysis of the Koszul cohomology groups $K_{i,j}\bigl(\mc{T},\OO_{\mc{T}}(1)\bigr)$ is the most technical part of our paper, and is explained in detail in Section~\ref{sec:repth}. It relies on applications of the Kempf--Weyman technique for constructing syzygies, and a summary that avoids technicalities is presented in what follows. Some of the constructions that we use in Section~\ref{sec:repth} were already outlined in~\cite{E91}, but a formal verification of their correctness requires quite a bit of care. An essential tool we employ in our arguments is an \emph{explicit, characteristic-free version of Hermite reciprocity} which is, as far as we know, new. It consists of an explicit, natural isomorphism
\[\Sym^d(\D^i U) \lra \bw^i(\Sym^{d+i-1}U)\]
for all $d,i\geq 1$, which is described in terms of the change of bases between elementary symmetric polynomials and Schur polynomials. This isomorphism is explained in Section~\ref{subsec:Hermite}.

\vskip 4pt

\noindent{\bf{Syzygies of the tangent developable $\mc{T}$: the geometric intuition.}}
We conclude our Introduction by discussing the maps and spaces involved in the identification in Theorem~\ref{thm:intro-ki2=Wi+2}. We consider the diagram
\begin{equation}\label{eq:key-diagram}
\begin{aligned}
 \xymatrix{
  \D^{2i+2}U \oo \Sym^{g-3-i}(\D^{i+2} U) \ar[rr] \ar[d]_{\delta_2} & & \D^{2i+2}U \oo \bw^{i+2} \Sym^{g-2} U \ar[d]^{\tl{\delta}_2} \\
  \D^{i+2}U \oo \Sym^{g-2-i}(\D^{i+2} U) \ar[d]_{\delta_1} \ar[rr] & & \D^{i+2}U \oo \bw^{i+2} \Sym^{g-1} U \ar[d]^{\tl{\delta}_1} \\
  \Sym^{g-1-i}(\D^{i+2} U) \ar[rr] & & \bw^{i+2} \Sym^{g} U \\
}
 \end{aligned}
 \end{equation}
where the horizontal arrows are identifications provided by Hermite reciprocity, the left column is the $3$-term complex whose homology defines $W^{(i+2)}_{g-3-i}$, while the maps $\tl{\delta}_1$ and $\tl{\delta}_2$ are the unique ones making the diagram commute. To prove Theorem~\ref{thm:intro-ki2=Wi+2}, it suffices to explain why $K_{i,2}\bigl(\mc{T},\OO_{\mc{T}}(1)\bigr)$ may be identified with $\ker(\tl{\delta}_1) / \Im(\tl{\delta}_2)$. This is achieved by describing geometrically the groups in the right column of (\ref{eq:key-diagram}).

\vskip 3pt

Recall that $\Gamma\subseteq \PP^g$ denotes the rational normal curve of degree $g$, which we view as the Veronese embedding of $\PP^1:=\mathbb P\bigl(U^{\vee}\bigr)$, where $U$ is a two-dimensional $\kk$-vector space. We identify $\OO_{\PP^1}(g)=\OO_{\Gamma}(1)$ and $H^0(\Gamma, \OO_{\Gamma}(1))\cong H^0(\PP^g, \OO_{\PP^g}(1))\cong \mbox{Sym}^g U$. We introduce the \defi{jet bundle} $\mc{J}:=\mc{P}\bigl(\OO_{\PP^1}(g)\bigr)$ of $\OO_{\PP^1}(g)$, which sits canonically in an exact sequence
$$0\longrightarrow \omega_{\PP^1}\otimes \OO_{\PP^1}(g)\longrightarrow \mc{J} \longrightarrow \OO_{\PP^1}(g)\longrightarrow 0.$$
We denote by $\PP(\mc{J})\rightarrow \PP^1$ the corresponding projectivized tangent bundle.  The Taylor homomorphism of the jet bundle is a surjective map of sheaves $\mbox{Sym}^g U\otimes \OO_{\PP^1}\longrightarrow \mc{J}$, defined by $f\oo 1 \mapsto (df,f)$. It gives rise to a morphism $\tau:\PP(\mc{J})\rightarrow \PP^g$, mapping $\PP(\mc{J})$ birationally onto $\mc{T}$, so $\tau:\PP(\mc{J})\rightarrow\mc{T}$ provides a resolution of singularities for the tangent developable. There exists an exact sequence on $\PP^g$
\[0\lra \OO_{\mc{T}}\lra \tau_*\OO_{\PP(\mc{J})}\lra \omega_{\Gamma}\rightarrow 0,\]
which induces a long exact sequence in Koszul cohomology
\[ \cdots \lra K_{i+1,1}\bigl(\mc{T}, \tau_*\OO_{\PP(\mc{J})}, \OO_{\mc{T}}(1)\bigr) \overset{\zeta}{\lra} K_{i+1,1}\bigl(\Gamma, \omega_{\Gamma}, \OO_{\Gamma}(1)\bigr) \lra K_{i,2}\bigl(\mc{T},\OO_{\mc{T}}(1)\bigr) \lra \]
\[\lra K_{i,2}\bigl(\mc{T}, \tau_*\OO_{\PP(\mc{J})}, \OO_{\mc{T}}(1)\bigr) \lra \cdots \]
The desired identification of $K_{i,2}\bigl(\mc{T},\OO_{\mc{T}}(1)\bigr)\cong \ker(\tl{\delta}_1) / \Im(\tl{\delta}_2)$ follows if we can prove:
\begin{enumerate}
 \item $K_{i+1,1}\bigl(\mc{T}, \tau_*\OO_{\PP(\mc{J})}, \OO_{\mc{T}}(1)\bigr) \cong \D^{2i+2}U \oo \bw^{i+2} \Sym^{g-2} U$ and $K_{i,2}\bigl(\mc{T}, \tau_*\OO_{\PP(\mc{J})}, \OO_{\mc{T}}(1)\bigr)=0$.
 \item $K_{i+1,1}\bigl(\Gamma, \omega_{\Gamma}, \OO_{\Gamma}(1)\bigr) \cong
 \ker(\tl{\delta}_1)$.
 \item Under the above isomorphisms, the map $\zeta$ can be identified with $\tl{\delta}_2$.
\end{enumerate}

\vskip 4pt

Step  (1) is carried out in Proposition~\ref{prop:resolution-Rbar}, where we establish the identifications
$$K_{i+1,1}\bigl(\mc{T}, \tau_*\OO_{\PP(\mc{J})}, \OO_{\mc{T}}(1)\bigr)\cong H^1\bigl(\Gamma,\bw^{i+2}\xi\bigr) \cong \D^{2i+2} U\otimes  \bigwedge^{i+2} \mathrm{Sym}^{g-2} U,$$
with $\xi$ being the kernel of the Taylor homomorphism. We also show that $K_{i,2}\bigl(\mc{T}, \tau_*\OO_{\PP(\mc{J})}, \OO_{\mc{T}}(1)\bigr) =0$. Concerning (2), using the fact that the kernel bundle $M_{\OO_{\Gamma}(1)}$ is obtained by restricting the tautological subbundle on $\PP^g$ to $\Gamma$, it follows from \cite[Theorem~5.8]{E04} that
\[ K_{i+1,1}\bigl(\Gamma, \omega_{\Gamma}, \OO_{\Gamma}(1)\bigr) = \ker\Bigl\{H^1\bigl(\Gamma,\bw^{i+2}M_{\OO_{\Gamma}(1)} \oo \omega_{\Gamma}\bigr) \overset{\alpha}{\lra} \bw^{i+2}\Sym^g U  \oo H^1(\Gamma,\omega_{\Gamma})  \Bigr\},\]
Since $H^1(\Gamma,\bw^{i+2}M_{\OO_{\Gamma}(1)} \oo \omega_{\Gamma}) \cong \D^{i+2}U \oo \bw^{i+2} \Sym^{g-1} U$ and $H^1(\Gamma,\omega_{\Gamma}) \cong \kk$, the maps $\a$ and $\tl{\delta}_1$ have the same source and the same target. It is plausible then that $\a=\tl{\delta}_1$ and that (2) should hold (see Corollary~\ref{cor:C_i=ker-pi+1}). A verification of (3) would then conclude Theorem~\ref{thm:intro-ki2=Wi+2}.

\vskip 3pt

It would be very interesting to obtain the maps $\tilde{\delta}_1$ and $\tilde{\delta}_2$ geometrically, as maps induced in cohomology. It is unlikely that this can be realized by working directly on the curve $\Gamma$, but perhaps, as suggested to us by Ein and Lazarsfeld,  on a thickening of $\Gamma$ in $\PP^g$. Achieving this goal would give a truly geometric understanding of the syzygies of the tangent developable $\mathcal{T}$ and the forthcoming  paper of of Ein and Lazarsfeld \cite{lazarsfeld} goes some way  towards  geometrizing  the proof.

\vskip 3pt

In this paper, we do not execute this geometric approach, particularly because it requires at each step a very careful bookkeeping of the identifications involved.  Instead we take an \emph{algebraic approach} in Section~\ref{sec:repth}, which goes beyond the scope of this Introduction, and whose extended summary is included in Section~\ref{subsec:summary-cor=kw}. The Koszul cohomology calculations sketched above are replaced in our approach by syzygy calculations involving the Kempf--Weyman technique.


\vskip 4pt


\vskip 3pt

\noindent {\bf Summary.} The paper is organized as follows. In Section~\ref{sec:fin-length-koszul} we discuss Koszul modules of finite length, a characteristic-free instance of Bott's vanishing theorem, and we prove Theorems~\ref{thm:vanishing-koszul} and~\ref{thm:boundHilb}. In Section~\ref{sec:Hermite-rec} we recall the characteristic-free construction of divided, exterior and symmetric powers, and prove our explicit version of Hermite reciprocity. In Section~\ref{sec:KW} we summarize some basic results on Koszul cohomology and minimal resolutions, and discuss the Kempf--Weyman technique for constructing syzygies. In Section~\ref{sec:repth} we analyze the syzygies of the tangential variety to a rational normal curve, and prove Theorems~\ref{thm:vanishing-tangential},~\ref{thm:vanish-ki2}, and~\ref{thm:intro-ki2=Wi+2}. We end with Section~\ref{sec:moduli-Green} where we use moduli of pseudo-stable curves and Theorem~\ref{thm:vanishing-tangential} to deduce (in suitable characteristics) the Generic Green Conjecture (Theorem~\ref{thm:genericGreen}).

\vskip 4pt

\begin{ack}
We acknowledge with thanks the contribution of A. Suciu. This project, including the companion paper \cite{koszul-groups}, started with the paper \cite{PS15}, and since then we benefited from numerous discussions with him. We warmly thank A. Beauville, L. Ein, D. Eisenbud, B. Klingler, P. Pirola, F.-O. Schreyer and C. Voisin for interesting discussions related to this circle of ideas. We are particularly grateful to R. Lazarsfeld who read an early version of the paper and suggested many improvements which significantly clarified the exposition.

\vskip 3pt

{\small{Aprodu was partially supported by the Romanian Ministry of Research and
Innovation, CNCS - UEFISCDI, grant
PN-III-P4-ID-PCE-2016-0030, within PNCDI III. Farkas was supported by the DFG grant \emph{Syzygien und Moduli}. Raicu was supported by the Alfred P. Sloan Foundation and by the NSF Grant No.~1600765. Weyman was partially supported by the Sidney Professorial Fund and the NSF grant No.~1802067.
}}
\end{ack}

\section{Koszul modules in algebraic geometry}
\label{sec:fin-length-koszul}

The goal of this section is to extend the main result of \cite{koszul-groups} to positive characteristic, by establishing Theorem~\ref{thm:vanishing-koszul}. Once this theorem is proved, the arguments from \cite[Section~3.3]{koszul-groups} carry over to deduce Theorem~\ref{thm:boundHilb}. We prove Theorem~\ref{thm:vanishing-koszul} using a suitable positive characteristic version of Bott vanishing, in conjunction with the hypercohomology spectral sequence that was featured in the work of Voisin on the Green conjecture \cite{V02}. We recall the basics on partitions and Schur functors in Section~\ref{subsec:Schur-functors}, and use them to formulate appropriate versions of Bott's theorem for flag varieties in Section~\ref{subsec:bott}, and for Grassmannians in Section~\ref{subsec:Grassmannians}. The proof of the vanishing Theorem~\ref{thm:vanishing-koszul} is explained in Section~\ref{subsec:vanish-Koszul}.

\subsection{Partitions and Schur functors}\label{subsec:Schur-functors}

The reference for this part is  \cite[Chapter~2]{weyman}. A partition $\ll$ is a sequence $\ll=(\ll_1\geq\ll_2\geq\cdots\geq\ll_r)$
of non-negative integers, where the \defi{parts} $\ll_j$ of $\ll$ are assumed to be eventually $0$. The \emph{size} of $\ll$ is written as $|\ll|=\ll_1+\ll_2+\cdots+\ll_r$. For each partition $\ll$ we define the {\em conjugate partition} $\ll'$ by letting $\ll'_j$ be the number of parts $\ll_i$ satisfying $\ll_i\geq j$. We often omit trailing zeros, and group parts of the same size together by writing $(a^b)$ for the sequence $(a,a,\ldots,a)$, where the entry $a$ is repeated $b$ times. For instance, we write $(4^3,2^4,1)$ for $(4,4,4,2,2,2,2,1,0,0,0,\ldots)$.

\vskip 3pt

We let $\kk$ be a field, and for each partition $\ll$ we consider the \defi{Schur functor $\bb{L}_{\ll}$} on the category of finite dimensional $\kk$-vector spaces, defined as in \cite[Section~2.1]{weyman}: For each vector space $V$, we construct $\bb{L}_{\ll}V$  as a suitable quotient
\[\bw^{\ll_1}V \oo \bw^{\ll_2}V \oo \cdots \oo \bw^{\ll_r}V \onto \bb{L}_{\ll}V,\]
satisfying the following properties:
\begin{itemize}
 \item If $\ll_1>\dim(V)$, then $\bb{L}_{\ll}(V)=0$.
 \item If $\ll_1=r$ and $\ll_i=0$ for $i>1$, then $\bb{L}_{\ll}V = \bw^r V$.
 \item If $\ll=(1^d)$, then $\bb{L}_{\ll}V = \Sym^d V$.
\end{itemize}
More generally, $\bb{L}_{\ll}$ can be applied to any locally free sheaf $\mc{E}$ on a variety $B$ over $\kk$, yielding a locally free sheaf $\bb{L}_{\ll}\mc{E}$ on $B$.

\vskip 3pt

We write $\bb{Z}^n_{\op{dom}}$ for the set of \emph{dominant weights} in $\bb{Z}^n$, that is, tuples $\ll=(\ll_1,\ldots,\ll_n)\in\bb{Z}^n$ satisfying $\ll_1\geq\ll_2\geq\ldots\geq\ll_n$, and note that any partition with $\ll_{n+1}=0$ can be identified with a dominant weight in $\bb{Z}^n_{\op{dom}}$.  Given $\ll\in\bb{Z}^n_{\op{dom}}$, for a locally free sheaf $\mathcal{E}$ of rank $n$ on $B$ we set
\[\bb{S}_{\ll}\mc{E}:= \bb{L}_{\mu'}\mc{E} \oo (\det\,\mc{E})^{\oo \ll_n}\]
where $\mu=(\ll_1-\ll_n,\ll_2-\ll_n,\ldots,\ll_{n-1}-\ll_n)$, and refer to $\bb{S}_{\ll}$ also as a \defi{Schur functor}. We have the identifications $\bb{S}_{(d,0^{n-1})}V = \Sym^d V$ and $\bb{S}_{(1^r,0^{n-r})}V = \bw^r V$ for $d\geq 0$ and $0\leq r\leq n$.

\subsection{Bott's Theorem for flag varieties {\cite[Ch. 3--4]{weyman}, \cite[Sections ~II.4--II.5]{jantzen}}}
\label{subsec:bott}

Let $V$ be an $n$-dimensional vector space over an algebraically closed field $\kk$. We denote by $\FF(V)$ the variety of complete flags
\[V_{\bullet}:\quad V = V_n \onto V_{n-1}\onto \cdots \onto V_1\onto V_0=0,\]
where $V_p$ is a $p$-dimensional quotient of $V$ for each $p=0,\ldots,n$. We write $\mc{Q}_p(V)$ for the tautological rank $p$ quotient bundle on $\FF(V)$, whose fiber over a flag $V_{\bullet}$ is $V_p$. We consider the tautological line bundle $\mc{L}_p(V)$ on $\FF(V)$ defined as the kernel of the quotient map $\mc{Q}_p(V)\onto\mc{Q}_{p-1}(V)$. For $\ll\in\bb{Z}^n$ we define
\[\mc{L}^{\ll}(V) := \mc{L}_1(V)^{\oo \ll_1} \oo \mc{L}_2(V)^{\oo \ll_2} \oo \cdots \oo \mc{L}_n(V)^{\oo \ll_n}.\]
More generally, if $B$ is a variety over $\kk$ and $\mc{E}$ is a locally free sheaf of rank $n$ on~$B$ we denote by $\FF_B(\mc{E})$ the \defi{relative flag variety}, equipped with a natural map
\[\pi:\FF_B(\mc{E}) \lra B,\]
such that for every point $b\in B$ the fiber $\pi^{-1}(b)$ is isomorphic to the variety $\FF(\mc{E}_b)$ of flags on the fiber $\mc{E}_b = \mc{E} \oo \kk(b)$. We define the tautological bundles $\mc{Q}_p(\mc{E})$ and $\mc{L}^{\ll}(\mc{E})$ in analogy to the absolute case, and note that we have a tautological sequence of quotient maps on $\FF_B(\mc{E})$
\[\pi^*(\mc{E}) = \mc{Q}_n(\mc{E}) \onto \mc{Q}_{n-1}(\mc{E}) \onto \cdots \onto \mc{Q}_1(\mc{E}) \onto \mc{Q}_0(\mc{E}) = 0.\]
We need the following characteristic-free version of Bott's Theorem, which is due to Kempf (and often referred to as \defi{Kempf vanishing}) -- see \cite[Section~II.4]{jantzen} and \cite[Theorem ~4.1.10]{weyman}.

\begin{thm}\label{thm:kempf-vanishing}
 If $\mc{E}$ is a locally free sheaf of rank $n$ on $B$ and $\ll\in\bb{Z}^n_{\op{dom}}$, then
 \begin{equation}\label{eq:Rpi*}
 R^k\pi_*(\mc{L}^{\ll}(\mc{E})) = \begin{cases}
 \bb{S}_{\ll}\mc{E} & \mbox{if }k=0; \\
 0 & \mbox{if }k>0.
 \end{cases}
 \end{equation}
 In particular, if $B=\Spec(\kk)$ and $\mc{E}=V$, then $\mc{L}^{\ll}(V)$ has vanishing higher cohomology and $$H^0(\FF(V),\mc{L}^{\ll}(V)) = \bb{S}_{\ll}V.$$
\end{thm}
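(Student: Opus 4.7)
The plan is first to reduce to the absolute case $B=\Spec(\kk)$, $\mc{E}=V$, and then to handle that case by realizing $\FF(V)$ as an iterated tower of $\PP^1$-bundles. For the reduction, note that $\pi:\FF_B(\mc{E})\to B$ is smooth (hence flat) with proper, geometrically integral fibers isomorphic to $\FF(\mc{E}_b)$, and that both sides of (\ref{eq:Rpi*}) commute with arbitrary base change on $B$. Therefore cohomology-and-base-change, applied downward in degree starting from the top (where $R^k\pi_*$ automatically vanishes above the relative dimension), bootstraps the absolute statement to the relative one, provided I can show that $H^k(\FF(V),\mc{L}^{\ll}(V))$ vanishes for $k>0$ and equals $\bb{S}_{\ll}V$ for $k=0$.

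For the absolute case, the plan is to factor the structural map $\FF(V)\to\Spec(\kk)$ as an iterated tower of $\PP^1$-bundles indexed by a reduced expression for the longest element of the Weyl group of $\sl_n$; equivalently, one works with a sequence of minimal-parabolic quotients, giving a Bott--Samelson-type setup. At each stage one must compute the pushforward of a line bundle along a $\PP^1$-fibration, and the dominance hypothesis $\ll_1\geq\cdots\geq\ll_n$ ensures that the relative degrees encountered remain non-negative throughout. Consequently higher cohomology vanishes stage by stage, and the zeroth pushforwards assemble to produce $\bb{S}_{\ll}V$; in characteristic zero this reproduces the classical Borel--Weil--Bott computation.

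The hardest part is positive characteristic: the intermediate pushforwards are no longer direct sums of Schur bundles of lower rank, but carry only filtrations, and one must verify that these filtrations remain acyclic. I would resolve this by invoking the Frobenius splitting of $G/B$ constructed by Mehta--Ramanathan, which compatibly splits every Bott--Samelson variety and yields the desired vanishing uniformly in all characteristics; alternatively one can follow Kempf's original argument, which inducts along the Bruhat order using local cohomology sequences attached to Schubert cells. With the vanishing in hand, the identification $H^0(\FF(V),\mc{L}^{\ll}(V))\cong \bb{S}_{\ll}V$ follows by constructing the natural map from the tautological quotients $V\onto\mc{Q}_p(V)$: each induces a map $\bw^p V\to H^0(\FF(V),\det\mc{Q}_p(V))$, and multiplying such sections according to the transpose partition $\mu'$ of $\mu=(\ll_1-\ll_n,\ldots,\ll_{n-1}-\ll_n)$ and twisting by $(\det V)^{\oo\ll_n}$ produces a morphism $\bb{S}_{\ll}V=\bb{L}_{\mu'}V\oo(\det V)^{\oo\ll_n}\to H^0(\FF(V),\mc{L}^{\ll}(V))$. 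One concludes by a dimension count: the Euler characteristic $\chi(\FF(V),\mc{L}^{\ll}(V))$ is characteristic-free by Riemann--Roch, the higher cohomology vanishes by the preceding step, and the resulting count matches the Weyl-dimension formula for $\bb{S}_{\ll}V$, forcing the map to be an isomorphism.
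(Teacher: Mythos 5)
The paper does not give a proof of this statement; it is quoted as Kempf vanishing with citations to Jantzen's book and Weyman's book. Your proposal is therefore a reconstruction rather than a comparison, so I evaluate it on its own terms.

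The outer layers are fine: the reduction from the relative to the absolute case via cohomology-and-base-change (descending in degree from the top) is standard, and the concluding identification of $H^0(\FF(V),\mc{L}^{\ll}(V))$ with $\bb{S}_{\ll}V$ via a natural map out of $\bb{L}_{\mu'}V\otimes(\det V)^{\ll_n}$ plus a characteristic-free Euler-characteristic count is a legitimate endgame, since the ABW Schur modules are free $\ZZ$-modules of the rank predicted by the Weyl formula.

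There is, however, a genuine gap in the middle, and it is precisely the point that makes Kempf vanishing a theorem rather than a routine computation. Two separate problems. First, the structure map $\FF(V)\to\Spec(\kk)$ does \emph{not} factor as a tower of $\PP^1$-bundles indexed by a reduced word for $w_0$: what is a $\PP^1$-tower is the Bott--Samelson variety $BS(w_0)$, which only maps \emph{birationally} onto $\FF(V)$. Transporting cohomology along that birational map already requires $R\pi_*\mc{O}_{BS(w_0)}=\mc{O}_{\FF(V)}$, which is itself a Frobenius-splitting (or rational-singularities) statement, so you cannot use the tower naively. Second, and more seriously, the claim that dominance of $\ll$ forces all intermediate relative degrees to be non-negative is false: the weights that appear at intermediate stages are governed by the dot action $s_i\bullet\ll$ and need not be dominant, and the intermediate pushforwards are vector bundles (not line bundles) carrying nontrivial filtrations. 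So ``higher cohomology vanishes stage by stage'' does not follow from dominance alone, even in characteristic zero; this is exactly the phenomenon your last paragraph acknowledges when it says the intermediate pushforwards ``carry only filtrations.'' The Mehta--Ramanathan Frobenius splitting of $G/B$ (applied together with the semi-ampleness of $\mc{L}^{\ll}$ for dominant $\ll$), or Kempf's original induction via Schubert cells and local cohomology, is not a patch for positive characteristic — it is the actual content of the proof, and your first pass would collapse without it. The write-up should lead with one of those arguments rather than present the na\"ive $\PP^1$-tower as a first approximation that only needs tweaking in characteristic $p$.
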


We shall be interested more generally in the cohomology of line bundles $\mc{L}^{\ll}(V)$ when $\ll$ is not dominant. To that end we follow the analysis described in \cite[Section II.5]{jantzen}. We consider the action of the symmetric group on $\bb{Z}^n$ defined by
\[s_i\bullet\ll := (\ll_1,\ldots,\ll_{i-1},\ll_{i+1}-1,\ll_i+1,\ll_{i+2},\ldots,\ll_n),\]
where $s_i$ denotes the transposition $(i,i+1)$.

\begin{proposition}[{\cite[Proposition II.5.4]{jantzen}}]\label{prop:jantzen-cohom}
 Suppose that $V$ is a $\kk$-vector space with $\dim(V)=n$, consider a weight $\ll\in\bb{Z}^n$, and fix $1\leq i\leq n-1$.
\begin{enumerate}
 \item[(a)] If $\ll_i = \ll_{i+1}-1$, then $H^k(\FF(V),\mc{L}^{\ll}(V))=0$ for all $k$.
 \item[(b)] Suppose that $\ll_i\geq\ll_{i+1}$ and that either $\op{char}(\kk)=0$, or $\op{char}(\kk)=p>0$ and furthermore
 \[ \ll_i-\ll_{i+1}\leq p-1.\]
 We have that for all $k\in\bb{Z}$
 \[H^k\bigl(\FF(V),\mc{L}^{\ll}(V)\bigr) = H^{k+1}\bigl(\FF(V),\mc{L}^{s_i\bullet\ll}(V)\bigr).\]
\end{enumerate}
\end{proposition}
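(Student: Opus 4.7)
My plan is to establish Proposition~\ref{prop:jantzen-cohom} by reducing both statements to cohomology on a $\bb{P}^1$-bundle. I would introduce the partial flag variety $X$ obtained from $\FF(V)$ by forgetting the $i$-th step, together with the natural projection $\pi: \FF(V) \lra X$. Writing $\mc{K} := \ker\bigl(\mc{Q}_{i+1}(V) \twoheadrightarrow \mc{Q}_{i-1}(V)\bigr)$ for the rank-$2$ bundle on $X$, the morphism $\pi$ identifies $\FF(V)$ with the projectivization of $\mc{K}$. In this picture, $\mc{L}_j(V)$ for $j \notin \{i, i+1\}$ is pulled back from $X$, while on every $\bb{P}^1$-fiber, $\mc{L}_{i+1}(V)$ appears as the tautological sub line bundle and $\mc{L}_i(V)$ as the tautological quotient. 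A direct calculation then shows that $\mc{L}^{\ll}(V)$ restricts to $\OO_{\bb{P}^1}(\ll_i - \ll_{i+1})$ on every fiber.

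Part (a) is immediate: with $\ll_i - \ll_{i+1} = -1$, the fiber restriction is $\OO(-1)$, whose cohomology vanishes in every degree. Cohomology and base change for the flat proper morphism $\pi$ yields $R^k\pi_*\mc{L}^{\ll}(V) = 0$ for all $k$, and the Leray spectral sequence then forces $H^k(\FF(V), \mc{L}^{\ll}(V)) = 0$ for every $k$. Note that no characteristic hypothesis is needed here.

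For part (b), set $d := \ll_i - \ll_{i+1} \geq 0$. The fiber restriction of $\mc{L}^{s_i\bullet\ll}(V)$ is $\OO(-d-2)$, since the reflection $s_i \bullet$ sends the fiber degree from $d$ to $-d-2$. By the standard projective-bundle pushforward formulas, $R\pi_*\mc{L}^{\ll}(V)$ is concentrated in cohomological degree $0$ and $R\pi_*\mc{L}^{s_i\bullet\ll}(V)$ in degree $1$, each being locally free of rank $d+1$ on $X$. The ``base'' twist factors in each pushforward agree because $s_i \bullet \ll$ leaves $\ll_j$ unchanged for $j \notin \{i, i+1\}$ and preserves the sum $\ll_i + \ll_{i+1}$, which controls how $\det \mc{K}$ enters. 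Using the Leray spectral sequences---each supported on a single row, with supports shifted by one relative to each other---the sought equality $H^k(\FF(V), \mc{L}^{\ll}(V)) = H^{k+1}(\FF(V), \mc{L}^{s_i \bullet \ll}(V))$ reduces to a natural isomorphism between the two pushforward sheaves on $X$.

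The main obstacle, and the source of the characteristic hypothesis, is precisely establishing this isomorphism. After stripping the common base twist, the comparison amounts to identifying $\Sym^d \mc{K}^{\vee}$ (arising from $R^0\pi_*$) with $\D^d \mc{K} \otimes (\det \mc{K})^{-1}$ (arising from $R^1\pi_*$ via relative Serre duality on the $\bb{P}^1$-bundle). For a rank-$2$ bundle the natural comparison map $\Sym^d \mc{K} \to \D^d \mc{K}$ is an isomorphism precisely when all binomial coefficients $\binom{d}{k}$ for $0 \leq k \leq d$ are invertible in $\kk$---equivalently, when $\chr(\kk) = 0$ or $d \leq \chr(\kk) - 1$. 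This is exactly the hypothesis $\ll_i - \ll_{i+1} \leq p - 1$ of the proposition; beyond this bound the two pushforward sheaves genuinely differ and the shift identity can fail, but under the stated hypothesis the isomorphism is immediate and the Leray comparison closes the argument.
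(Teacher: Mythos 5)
Your argument is correct and, translated into the language of $\bb{P}^1$-bundles, is essentially the proof given in Jantzen (Proposition II.5.4), which the paper cites without reproducing: project $\FF(V)\to X$ along the $i$-th step, read off the fiberwise degree $\ll_i-\ll_{i+1}$, and reduce the degree shift in (b) to the $SL_2$-statement that $\Sym^d$ and $\D^d$ of the rank-two bundle $\mc{K}$ coincide under the stated bound on the characteristic. One small imprecision worth flagging: the natural comparison map runs $\D^d\mc{K}\to\Sym^d\mc{K}$ (inclusion into $T^d$ followed by projection), not the other way, and the parenthetical ``equivalently, when $\chr(\kk)=0$ or $d\leq\chr(\kk)-1$'' is not actually an equivalence with ``all $\binom{d}{k}$ invertible'' --- by Lucas's theorem, $d=p^2-1$ has every $\binom{d}{k}$ a unit mod $p$ while $d>p-1$. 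This does not affect your proof, since you only use the true implication $d\leq p-1\Rightarrow$ all $\binom{d}{k}$ invertible, which is what the proposition's hypothesis supplies.
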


We shall use Proposition~\ref{prop:jantzen-cohom} in order to prove the following vanishing result, which plays a key role in our vanishing theorem for Koszul modules.

\begin{lem}\label{lem:coh-vanishing}
 Let $V$ denote a $\kk$-vector space of dimension $n\geq 3$, and assume that $p:=\op{char}(\kk)$ satisfies either $p=0$ or $p\geq n-2$. Suppose $2\leq r\leq 2n-3$ and consider the weight
 \[\ll = (n-2-r,1-r,0^{n-2})\in\bb{Z}^n.\]
 We have that
 \begin{equation}\label{eq:Hr-1=0}
 H^{r-1}\bigl(\FF(V),\mc{L}^{\ll}(V)\bigr)=0.
 \end{equation}
\end{lem}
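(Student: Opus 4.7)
The plan is to establish the stronger vanishing $H^*(\FF(V), \mc{L}^{\ll}(V)) = 0$, by producing a sequence of dot reflections that transports $\ll$ to a weight satisfying the hypothesis of Proposition~\ref{prop:jantzen-cohom}(a). Since each admissible reflection produces an isomorphism $H^k(\mc{L}^{\mu}) \cong H^{k+1}(\mc{L}^{s_i\bullet\mu})$ via (b), showing that all cohomology of $\ll$ vanishes is equivalent to exhibiting such a sequence whose final weight is annihilated by (a).

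The key observation is the computation
\[\ll + \rho = (2n-3-r,\; n-1-r,\; n-3,\; n-4,\; \ldots,\; 1,\; 0),\quad \rho = (n-1,n-2,\ldots,1,0),\]
which always has a repeated entry for $2 \leq r \leq 2n-3$: positions $2$ and $r+1$ share the value $n-1-r$ when $r \leq n-1$, while positions $1$ and $r-n+3$ share the value $2n-3-r$ when $r \geq n$. Since $s_i\bullet$ corresponds to the adjacent transposition of entries $i$ and $i+1$ of $\ll + \rho$, and since the hypothesis of (a) translates into two adjacent entries of $\ll + \rho$ being equal, the natural strategy is to move the two copies of the repeated value into adjacent positions through admissible reflections. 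Concretely, for $r \leq n-1$ I would apply $s_r, s_{r-1}, \ldots, s_3$ (total $r-2$ steps), walking the second copy of $n-1-r$ from position $r+1$ down to position $3$; for $r \geq n$ I would apply first $s_1$ (swapping the first two entries of $\ll + \rho$) and then $s_{r-n+2}, s_{r-n+1}, \ldots, s_3$, walking the second copy of $2n-3-r$ from position $r-n+3$ down to position $3$. A straightforward induction on the step verifies that the dominance hypothesis $\mu_i \geq \mu_{i+1}$ of (b) holds at each intermediate stage, and that the resulting final weight $\mu$ satisfies $\mu_2 = \mu_3 - 1$, so that (a) applies.

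The technical hurdle is the verification of the second hypothesis of (b), namely $\mu_i - \mu_{i+1} \leq p - 1$, at each step. Running through the sequence one observes that the intermediate differences grow by $1$ at each reflection, reaching a maximum of $r-3$ in the first case and $r-n-1$ in the second case. Both quantities are strictly smaller than the critical difference $\ll_1 - \ll_2 = n-3$ arising at the very first $s_1$ step in case B, which is the genuine bottleneck and forces exactly the hypothesis $p \geq n-2$; every other step is then accommodated comfortably by the same bound. Splicing the chain of isomorphisms furnished by (b) with the vanishing at $\mu$ provided by (a) yields $H^*(\FF(V), \mc{L}^{\ll}(V)) = 0$, and in particular the desired vanishing of $H^{r-1}$.
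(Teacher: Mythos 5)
Your proof is correct and takes essentially the same route as the paper: both proofs split into the cases $2\leq r\leq n-1$ and $n\leq r\leq 2n-3$, chain applications of Proposition~\ref{prop:jantzen-cohom}(b) down to a weight annihilated by part (a), and identify the same bottleneck gap $\ll_1-\ll_2=n-3$ as the source of the hypothesis $p\geq n-2$. The presentation differs in two minor ways worth noting: you phrase everything via $\ll+\rho$ and adjacent transpositions (which makes the admissibility conditions visually transparent), and in each case you walk the repeated entry of $\ll+\rho$ into positions $(2,3)$ via the word $s_r,\ldots,s_3$ (resp.\ $s_1,s_{r-n+2},\ldots,s_3$), whereas the paper works backward from a weight $\ll^1$ with the repeated entry at positions $(r,r+1)$ (resp.\ $(r-n+2,r-n+3)$) using the word $s_{r-1},\ldots,s_2$ (resp.\ $s_{r-n+1},\ldots,s_2,s_1$). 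The terminal non-dominant weight is therefore a different element of the same dot-orbit, but the sequence of intermediate gaps $0,1,\ldots,r-3$ (resp.\ $0,1,\ldots,r-n-1$ plus the single $n-3$ from the $s_1$ step) is identical, so the argument is sound and proves the same, slightly stronger conclusion $H^*(\FF,\mc{L}^\ll)=0$. The inductive verification of dominance and of the gap bounds at each step is asserted rather than written out, but the assertions are correct.
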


\begin{proof}
 Our proof consists of several applications of Proposition~\ref{prop:jantzen-cohom}, whose hypothesis is more restrictive when $p>0$. It is then sufficient to consider the case $p\geq n-2$ for the rest of the argument. We split our analysis into two cases. To simplify notation, we write $\FF$ for $\FF(V)$ and $\mc{L}^{\mu}$ for $\mc{L}^{\mu}(V)$.  

\vskip 3pt

Suppose that $2\leq r\leq n-1$ and consider the sequence of partitions $\ll^1,\ldots,\ll^{r-1}$ defined by
\[\ll^1=(n-2-r,(-1)^{r-1},0^{n-r})\mbox{ and }\ll^{j+1}:= s_{r-j}\bullet \ll^j, \mbox{ for }j=1,\ldots,r-2.\]
Note that $\ll^j = \bigl(n-2-r,(-1)^{r-1-j},-j,0^{n-r+j-1}\bigr)$ for $1\leq j\leq r-1$, and that in particular $\ll^{r-1}=\ll$. Since $\ll^1_r = \ll^1_{r+1}-1$, it follows from Proposition~\ref{prop:jantzen-cohom}(a) with $i=r$ that
\begin{equation}\label{eq:H1=0}
H^1\bigl(\FF,\mc{L}^{\ll^1}\bigr)=0.
\end{equation}
We have for $1\leq j\leq r-2$ that the following inequalities
\[0\leq \ll^{j}_{r-j} - \ll^{j}_{r-j+1} = j-1 \leq r-3\leq n-4 < p-1\]
hold, so it follows from Proposition~\ref{prop:jantzen-cohom}(b) with $i=r-j$ that
\begin{equation}\label{eq:Hj-1=Hj}
 H^{j}\bigl(\FF,\mc{L}^{\ll^{j}}\bigr) = H^{j+1}\bigl(\FF,\mc{L}^{\ll^{j+1}}\bigr) \  \mbox{ for  } j=1,\ldots,r-2.
\end{equation}
Combining (\ref{eq:H1=0}) with (\ref{eq:Hj-1=Hj}) and the fact that $\ll^{r-1}=\ll$, we obtain (\ref{eq:Hr-1=0}).

\vskip 4pt

Suppose now that $n\leq r\leq 2n-3$, and consider the partitions $\ll^1,\ldots,\ll^{r-n+1}$ defined by
\[\ll^1=\bigl(-r,(-1)^{r-n+1},0^{2n-2-r}\bigr)\mbox{ and } \ll^{j+1}:= s_{r-n+2-j}\bullet \ll^{j} \  \mbox{ for } j=1,\ldots,r-n.\]
Note that $\ll^j = \bigl(-r,(-1)^{r-n+1-j},-j,0^{2n-3-r+j}\bigr)$ for $1\leq j\leq r-n+1$. Since $\ll^1_{r-n+2} = \ll^1_{r-n+3}-1$, it follows from Proposition~\ref{prop:jantzen-cohom}(a) with $i=r-n+2$ that
\begin{equation}\label{eq:Hn+1=0}
H^{n}\bigl(\FF,\mc{L}^{\ll^1}\bigr)=0.
\end{equation}
For $1\leq j\leq r-n$ we obtain  that the following inequalities
\[0\leq \ll^{j}_{r-n+2-j} - \ll^{j}_{r-n+3-j} = j-1 \leq r-n-1\leq n-4 < p-1\]
hold, so it follows from Proposition~\ref{prop:jantzen-cohom}(b) with $i=r-n+2-j$ that
\begin{equation}\label{eq:Hn+j-1=Hn+j}
 H^{n+j-1}(\FF,\mc{L}^{\ll^j}) = H^{n+j}(\FF,\mc{L}^{\ll^{j+1}}), \mbox{ for all }j=1, \ldots, r-n.
\end{equation}
Combining (\ref{eq:Hn+1=0}) with (\ref{eq:Hn+j-1=Hn+j}) we conclude that
\[H^r(\FF,\mc{L}^{\ll^{r-n+1}})=0,\]
and note that $\ll^{r-n+1}=(-r,n-1-r,0^{n-2})$. We now observe that $\ll^{r-n+1} = s_1\bullet \ll$ and that
\[0\leq \ll_1-\ll_2 = n-3 \leq p-1\]
so we can apply Proposition~\ref{prop:jantzen-cohom}(b) to obtain the vanishing statements
\[H^{r-1}\bigl(\FF,\mc{L}^{\ll}\bigr) = H^r\bigl(\FF,\mc{L}^{\ll^{r-n+1}}\bigr)=0,\]
concluding our proof.
\end{proof}

\subsection{Bott's Theorem for Grassmannians}
\label{subsec:Grassmannians}

Let $\GG(V)=\Gr_2(V^{\vee})$ be the Grassmannian of $2$--dimensional quotients of $V$, or equivalently $2$-dimensional subspaces of $V^{\vee}$. We write $\GG:=\GG(V)$ and consider the tautological exact sequence
\begin{equation}\label{eq:tautological}
0\lra\mc{R}\lra V\oo\mc{O}_{\GG}\lra\mc{Q}\lra 0,
\end{equation}
where $\mc{R}$ (respectively $\mc{Q}$) denotes the universal rank $n-2$ subbundle (respectively  rank $2$ quotient bundle) of the trivial bundle $V\otimes \OO_{\GG}$. We write $\mc{O}_{\GG}(1):=\bw^2\mc{Q}$ for the Pl\"ucker line bundle and note that for every $\a=(\a_1,\a_2)\in\bb{Z}^2_{\op{dom}}$ we have the identification
\begin{equation}\label{eq:SS-alpha}
\bb{S}_{\a}\mc{Q} = \Sym^{\a_1-\a_2}\mc{Q} \,\oo \mc{O}_{\GG}(\a_2).
\end{equation}
We wish to compute the cohomology groups of some of the sheaves $\bb{S}_{\a}\mc{Q}$. To that end we reduce the calculation to the case of line bundles on the complete flag variety $\FF(V)$. We write $\FF:=\FF(V)$ and consider the natural map
\begin{equation}\label{eq:def-psi}
\psi:\FF \to \GG,\mbox{ given by }V_{\bullet}\mapsto V_2,
\end{equation}
which enables us to realize $\FF$ as the fiber product $\FF_{\GG}(\mc{Q}) \times_{\GG}\FF_{\GG}(\mc{R})$. Alternatively, let $Y:=\FF_{\GG}(\mc{Q})$ be the \defi{partial flag variety} parametrizing partial flags
\[V \onto V_2\onto V_1\onto V_0=0\]
and write $\psi_1:Y\to\GG$ for the structure map when we think of $Y$ as a relative flag variety over $\GG$ (which in this case is a $\bb{P}^1$-bundle). We then identify $\FF$ with the relative flag variety $\FF_Y(\psi_1^*\mc{R})$, and write $\psi_2:\FF\to Y$ for the structure map. In particular, we have a factorization
$$\psi=\psi_1\circ\psi_2:\FF\rightarrow \GG,$$ where
$\psi_2$ forgets the subspaces $V_{n-1},\ldots, V_3$ and  $\psi_1$  forgets $V_1$ respectively. Recall that $\mathcal{L}^{\lambda}=\mathcal{L}^{\lambda}(V)$.

\begin{thm}\label{thm:bott}
 Let $\a\in\bb{Z}^2_{\op{dom}}$ and  $\b\in\bb{Z}^{n-2}_{\op{dom}}$, and let $\ll=(\a|\b)\in\bb{Z}^n$ denote their concatenation. Using notation (\ref{eq:def-psi}) we have
 \begin{equation}\label{eq:Rpsi*}
 R^k\psi_*(\mc{L}^{\ll}) = \begin{cases}
 \bb{S}_{\a}\mc{Q} \oo \bb{S}_{\b}\mc{R} & \mbox{if }k=0; \\
 0 & \mbox{if }k>0.
 \end{cases}
 \end{equation}
In particular, we have that
\begin{equation}\label{eq:HF=HG}
 H^k\bigl(\GG,\bb{S}_{\a}\mc{Q} \oo \bb{S}_{\b}\mc{R}\bigr) = H^k(\FF,\mc{L}^{\ll})\ \ \mbox{ for all }k,
\end{equation}
and furthermore:
\begin{enumerate}
\item\label{it:1} If $q\geq 0$, then $H^0(\GG,\Sym^q\mc{Q}) = \Sym^q V$ and $H^k(\GG,\Sym^q\mc{Q})=0$ for all $k>0$.
\item\label{it:2} For all $a,k\geq 0$, we have that $H^k\bigl(\GG,\Sym^a\mc{Q}\oo\mc{R}\bigr) = 0$.
\item\label{it:3} If $\op{char}(\kk)=p$ satisfies either $p=0$ or $p\geq n-2$, then
\begin{equation}\label{eq:Hr-1-vanishing}
H^{r-1}\bigl(\GG,\bb{S}_{(n-2-r,1-r)}\mc{Q}\bigr) = 0, \mbox{ for all }r\geq 2.
\end{equation}
\end{enumerate}
\end{thm}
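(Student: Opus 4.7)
The plan is to establish the pushforward formula~(\ref{eq:Rpsi*}) by factoring $\psi=\psi_1\circ\psi_2$ and applying Kempf vanishing (Theorem~\ref{thm:kempf-vanishing}) twice, then to extract parts~(\ref{it:1})--(\ref{it:3}) as concrete special cases of the cohomological identity~(\ref{eq:HF=HG}). First I would observe that under this factorization the tautological line bundles on $\FF$ split compatibly: the bundles $\mc{L}_1$ and $\mc{L}_2$ are pulled back from $Y=\FF_{\GG}(\mc{Q})$ and correspond to the weight $\a$ on the rank-$2$ bundle $\mc{Q}$, while $\mc{L}_3,\ldots,\mc{L}_n$ correspond to the weight $\b$ on the rank $n-2$ bundle $\psi_1^*\mc{R}$, for which $\psi_2:\FF\to Y$ is the relative complete flag variety. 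This gives a natural isomorphism
\[
\mc{L}^{\ll}(V) \;\cong\; \psi_2^*\,\mc{L}^{\a}(\mc{Q})\,\oo\,\mc{L}^{\b}(\psi_1^*\mc{R}),
\]
where $\mc{L}^{\a}(\mc{Q})$ lives on $Y$ and $\mc{L}^{\b}(\psi_1^*\mc{R})$ lives on $\FF=\FF_Y(\psi_1^*\mc{R})$.

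Next I would apply Theorem~\ref{thm:kempf-vanishing} in stages. Since $\b$ is dominant, Kempf vanishing along $\psi_2$ combined with the projection formula yields $R^k\psi_{2*}\mc{L}^{\ll}(V)=\mc{L}^{\a}(\mc{Q})\oo\psi_1^*\bb{S}_{\b}\mc{R}$ for $k=0$ and vanishes otherwise. Since $\a$ is dominant, Kempf vanishing along the $\bb{P}^1$-bundle $\psi_1$ (together with the projection formula) gives $R^k\psi_{1*}\mc{L}^{\a}(\mc{Q})=\bb{S}_{\a}\mc{Q}$ for $k=0$ and vanishes otherwise. Composing via the Grothendieck/Leray spectral sequence for $\psi=\psi_1\circ\psi_2$ produces~(\ref{eq:Rpsi*}), and~(\ref{eq:HF=HG}) follows immediately from Leray for $\psi$ since all higher direct images vanish.

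For part~(\ref{it:1}) I would set $\a=(q,0)$ and $\b=(0^{n-2})$, so $\ll=(q,0,\ldots,0)\in\bb{Z}^n_{\op{dom}}$; Theorem~\ref{thm:kempf-vanishing} applied to the structure map $\FF\to\Spec(\kk)$ gives $H^0(\FF,\mc{L}^{\ll}(V))=\bb{S}_{\ll}V=\Sym^q V$ with vanishing above, and~(\ref{eq:HF=HG}) transfers the conclusion to $\GG$. For part~(\ref{it:2}) I would identify $\mc{R}=\bb{S}_{(1,0^{n-3})}\mc{R}$ and take $\ll=(a,0,1,0^{n-3})$; the crucial equality $\ll_2=\ll_3-1$ lets Proposition~\ref{prop:jantzen-cohom}(a) with $i=2$ kill every cohomology group of $\mc{L}^{\ll}(V)$ on $\FF$, and~(\ref{eq:HF=HG}) transports this vanishing to $\GG$. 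For part~(\ref{it:3}), with $\a=(n-2-r,1-r)$ and $\b=(0^{n-2})$, the concatenation $\ll=(n-2-r,1-r,0^{n-2})$ is precisely the weight handled by Lemma~\ref{lem:coh-vanishing} for $2\leq r\leq 2n-3$; for the remaining range $r\geq 2n-2$ one has $r-1\geq 2n-3>2n-4=\dim\GG$, so $H^{r-1}(\GG,\cdot)$ vanishes for trivial dimensional reasons.

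The only real subtlety I anticipate lies in carefully justifying the splitting $\mc{L}^{\ll}(V)\cong\psi_2^*\mc{L}^{\a}(\mc{Q})\oo\mc{L}^{\b}(\psi_1^*\mc{R})$---in particular, checking that under the factorization $\psi=\psi_1\circ\psi_2$ the first two coordinates of $\ll$ really attach to the weight of $\mc{Q}$ and the last $n-2$ to $\psi_1^*\mc{R}$. Once this bookkeeping is pinned down, each of the numerical computations is an immediate application of tools already in place: Kempf vanishing for (\ref{eq:Rpsi*}) and part~(\ref{it:1}); the "$\ll_i=\ll_{i+1}-1$" criterion of Proposition~\ref{prop:jantzen-cohom}(a) for part~(\ref{it:2}); and Lemma~\ref{lem:coh-vanishing} (together with a trivial dimensional bound) for part~(\ref{it:3}).
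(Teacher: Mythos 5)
Your argument is correct and follows the paper's proof essentially step for step: the same factorization $\psi = \psi_1 \circ \psi_2$, the same two applications of Kempf vanishing plus the projection formula, the same use of the composite derived pushforward and Leray to get~(\ref{eq:HF=HG}), and the same three specializations of $(\a,\b)$ for parts~(\ref{it:1})--(\ref{it:3}), with part~(\ref{it:2}) via Proposition~\ref{prop:jantzen-cohom}(a) and part~(\ref{it:3}) via Lemma~\ref{lem:coh-vanishing} together with dimensional vanishing for $r > 2n-3$. The one step you flag as subtle---the splitting $\mc{L}^{\ll}(V)\cong\psi_2^*\mc{L}^{\a}(\mc{Q})\oo\mc{L}^{\b}(\psi_1^*\mc{R})$---is likewise asserted as a ``natural identification'' in the paper without further elaboration, so your treatment matches.
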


\begin{proof} To prove (\ref{eq:Rpsi*}) we first note that via the natural identifications above we have
\[\mc{L}^{\ll} = \mc{L}^{\ll}(V) \cong \psi_2^*(\mc{L}^{\a}(\mc{Q})) \oo_{\mc{O}_{\FF}} \mc{L}^{\b}(\psi_1^*\mc{R}).\]
Using the projection formula for the higher direct images along $\psi_2$, we obtain
\[R^k\psi_{2*}(\mc{L}^{\ll}) = \mc{L}^{\a}(\mc{Q}) \oo_{\mc{O}_Y} R^k\psi_{2*}(\mc{L}^{\b}(\psi_1^*\mc{R})) \overset{(\ref{eq:Rpi*})}{=} \begin{cases}
 \mc{L}^{\a}(\mc{Q}) \oo_{\mc{O}_Y} \bb{S}_{\b}(\psi_1^*\mc{R}) & \mbox{if }k=0; \\
 0 & \mbox{if }k>0.
 \end{cases}
\]
From the compatibility of $\psi_1^*$ with tensor constructions, it follows that $\bb{S}_{\b}(\psi_1^*\mc{R}) = \psi_1^*(\bb{S}_{\b}\mc{R})$. The projection formula along $\psi_1$ then yields
\[
\begin{aligned}
R^k\psi_{1*}(\mc{L}^{\a}(\mc{Q}) \oo_{\mc{O}_Y} \psi_1^*(\bb{S}_{\b}\mc{R}) ) &= R^k\psi_{1*}(\mc{L}^{\a}(\mc{Q})) \oo \bb{S}_{\b} \mc{R} \\
&\overset{(\ref{eq:Rpi*})}{=} \begin{cases}
 \bb{S}_{\a} \mc{Q} \oo \bb{S}_{\b} \mc{R} & \mbox{if }k=0; \\
 0 & \mbox{if }k>0.
 \end{cases}
\end{aligned}
\]
Using the fact that $R\psi_* = R\psi_{1*}\circ R\psi_{2*}$, we obtain (\ref{eq:Rpsi*}). The conclusion (\ref{eq:HF=HG}) follows from (\ref{eq:Rpsi*}) and the Leray spectral sequence.

\vskip 4pt

To establish conclusion (\ref{it:1}) we apply (\ref{eq:HF=HG}) with $\a=(q,0)$ and $\b=(0^{n-2})$, together with Theorem~\ref{thm:kempf-vanishing}. To prove conclusion (\ref{it:2}) we apply (\ref{eq:HF=HG}) with $\a=(a,0)$ and $\b=(1,0^{n-3})$, together with Proposition~\ref{prop:jantzen-cohom}(a) with $i=2$ (noting that $\ll_2=\ll_3-1$). To prove conclusion (\ref{it:3}) we apply (\ref{eq:HF=HG}) with $\a=(n-2-r,1-r)$ and $\b=(0^{n-2})$, together with Lemma~\ref{lem:coh-vanishing} when $r\leq 2n-3$. Finally, when $r>2n-3$, then we have $r-1>2n-4=\dim(\GG)$, therefore the vanishing (\ref{eq:Hr-1-vanishing}) is a consequence of Grothendieck's vanishing theorem, see e.g. \cite[Theorem~III.2.7]{hartshorne}.
\end{proof}

\subsection{Vanishing for Koszul modules}
\label{subsec:vanish-Koszul}

We start by describing a geometric construction of the kernel $\ker(\delta_1)$ of the Koszul differential $\delta_1:V\oo S\lra S$. We let $\GG=\Gr_2(V^{\vee})\subseteq \mathbb  P \bigl(\bigwedge^2 V^{\vee}\bigr)$ as in Section~\ref{subsec:Grassmannians}, and consider
\[\mc{S} := \Sym_{\mc{O}_{\GG}}(\mc{Q}) = \mc{O}_{\GG} \oplus \mc{Q} \oplus \Sym^2\mc{Q} \oplus \cdots\]
viewed as a sheaf of graded $\mc{O}_{\GG}$-algebras on $\GG$. We define Koszul differentials
\[\delta_1^{\mc{Q}} : \mc{Q} \oo_{\mc{O}_{\GG}} \mc{S} \lra \mc{S}\quad\mbox{ and }\quad\delta_2^{\mc{Q}} : \bw^2\mc{Q} \oo_{\mc{O}_{\GG}} \mc{S} \lra \mc{Q} \oo_{\mc{O}_{\GG}} \mc{S}\]
in the usual way. We write $\bw^2\mc{Q} \oo_{\mc{O}_{\GG}} \mc{S}=\mc{S}(1)$, since $\bw^2\mc{Q}=\mc{O}_{\GG}(1)$ is the Pl\"ucker line bundle.

\begin{lem}\label{lem:H0-mcS} We have a commutative diagram
\begin{equation}\label{eq:H0-mcS}
\begin{gathered}
\xymatrix{
& \bw^2 V \oo S \ar[rr]^{\delta_2} \ar[d]_{f_2} & & V \oo S \ar[rrr]^{\delta_1} \ar[d]_{f_1} & & & S \ar[d]_{f_0} \\
0\ar[r] & H^0(\GG,\mc{S}(1)) \ar[rr]^{H^0(\GG,\delta_2^{\mc{Q}})} & & H^0(\GG,\mc{Q} \oo \mc{S}) \ar[rrr]^{H^0(\GG,\delta_1^{\mc{Q}})} & & & H^0(\GG,\mc{S}) \\
}
\end{gathered}
\end{equation}
where the maps $f_1$ and $f_0$ are isomorphisms. Moreover, we have an isomorphism $\ker(\delta_1)\cong H^0(\GG,\mc{S}(1))$.
\end{lem}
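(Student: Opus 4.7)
My plan is to construct the vertical maps $f_0, f_1, f_2$ from the natural identification $V = H^0(\GG, \mc{Q})$, verify commutativity of the diagram from the naturality of the Koszul differentials, and derive the claimed isomorphism $\ker(\delta_1) \cong H^0(\GG,\mc{S}(1))$ from a Koszul-type short exact sequence on $\GG$.

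Applying Theorem~\ref{thm:bott}(\ref{it:1}) graded piece by graded piece gives $H^0(\GG, \Sym^q \mc{Q}) = \Sym^q V$ with vanishing higher cohomology; summing over $q$ produces the isomorphism $f_0$. For $f_1$, I would twist the tautological sequence (\ref{eq:tautological}) by $\Sym^q\mc{Q}$ and take cohomology; Theorem~\ref{thm:bott}(\ref{it:2}) yields $H^i(\GG, \mc{R} \oo \Sym^q\mc{Q}) = 0$ for $i = 0, 1$, so the long exact sequence degenerates to an isomorphism $V \oo \Sym^q V \xrightarrow{\sim} H^0(\GG, \mc{Q} \oo \Sym^q\mc{Q})$, and summing these over $q$ gives $f_1$. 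I would define $f_2$ as $H^0$ of the morphism $\bw^2 V \oo \mc{S} \to \bw^2\mc{Q} \oo \mc{S} = \mc{S}(1)$ built from the tautological surjection $V \oo \mc{O}_\GG \onto \mc{Q}$. Commutativity of both squares in the diagram is then a formal consequence of the naturality of the Koszul differentials.

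For the final identification, I would use that $\mc{Q}$ has rank $2$, so the sheafy Koszul complex of $\mc{S}$ resolving $\mc{O}_\GG$ is short, and its strand of total degree $q+1$ yields the short exact sequence
\[
 0 \lra \bw^2\mc{Q} \oo \Sym^{q-1}\mc{Q} \lra \mc{Q} \oo \Sym^q\mc{Q} \lra \Sym^{q+1}\mc{Q} \lra 0.
\]
Taking $H^0$ and invoking Theorem~\ref{thm:bott}(\ref{it:1}) both to identify the $H^0$'s of the middle and right terms with $V \oo \Sym^q V$ and $\Sym^{q+1}V$, and to furnish the vanishing $H^1(\GG,\Sym^{q+1}\mc{Q}) = 0$, I would obtain the exact sequence
\[
 0 \lra H^0\bigl(\GG, \bw^2\mc{Q} \oo \Sym^{q-1}\mc{Q}\bigr) \lra V \oo \Sym^q V \lra \Sym^{q+1}V \lra 0,
\]
in which the rightmost arrow is the multiplication map $\delta_{1,q}$, since the sheaf-level Koszul differential is literal multiplication in $\mc{S}$. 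Hence $H^0(\GG, \bw^2\mc{Q} \oo \Sym^{q-1}\mc{Q}) \cong \ker(\delta_{1,q})$, and summing over $q$ produces the desired graded isomorphism $H^0(\GG,\mc{S}(1)) \cong \ker(\delta_1)$.

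The main obstacle I anticipate is bookkeeping rather than conceptual: one must be careful with the grading conventions (notably the degree shift inherent in the notation $\mc{S}(1)$) and with signs, in order to verify that the natural cohomological maps really agree, under $f_0, f_1, f_2$, with the Koszul differentials $\delta_1, \delta_2$ and their restrictions. Once these identifications are pinned down, the proof is a routine assembly of the Bott-type vanishing results established in Section~\ref{subsec:Grassmannians}.
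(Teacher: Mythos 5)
Your proposal is correct and takes essentially the same route as the paper: the paper's proof cites the characteristic-zero argument from the companion paper and notes that the only two cohomology inputs that need to be re-established in positive characteristic are $H^0(\GG,\Sym^q\mc{Q})=\Sym^q V$ (with vanishing higher cohomology) and $H^0(\GG,\mc{R}\oo\Sym^q\mc{Q})=H^1(\GG,\mc{R}\oo\Sym^q\mc{Q})=0$, which are exactly the two conclusions of Theorem~\ref{thm:bott} that you invoke to build $f_0$, $f_1$ and to split the twisted tautological sequence. One small slip: the surjectivity of $V\oo\Sym^q V\to\Sym^{q+1}V$ would follow from $H^1\bigl(\GG,\bw^2\mc{Q}\oo\Sym^{q-1}\mc{Q}\bigr)=0$, not from $H^1(\GG,\Sym^{q+1}\mc{Q})=0$ (and is in any case obvious since it is the multiplication map), but since only left-exactness of $H^0$ is needed for the kernel identification this does not affect the argument.
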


\begin{proof}
 The proof is along the lines  of \cite[Lemma~3.4]{koszul-groups}, which is set in  characteristic~$0$ context. The only potential differences in positive characteristic come from the two calculations of sheaf cohomology on Grassmannians in the said proof, namely
\[H^0(\GG,\Sym^q\mc{Q}) = \Sym^q V,\]
which is covered by conclusion (\ref{it:1}) of Theorem~\ref{thm:bott}, and
\[ H^0(\GG,\mc{R}\oo\Sym^q\mc{Q}) = H^1(\GG,\mc{R}\oo\Sym^q\mc{Q}) = 0\mbox{ for all }q\geq 0,\]
which follows from conclusion (\ref{it:2}) of Theorem~\ref{thm:bott}. The rest of the argument is manifestly characteristic independent, so we do not reproduce it here.
\end{proof}

Using Lemma~\ref{lem:H0-mcS}, we obtain the following geometric description of the Koszul module
\begin{equation}\label{eq:WVK-from-cohom}
 W(V,K) = \coker\Bigl\{
\xymatrix{
H^0(\GG,K\oo\mc{S}) \ar[rr]^{H^0(\GG,\eta)} & & H^0\bigl(\GG,\mc{S}(1)\bigr)
}\Bigr\},
\end{equation}
where $\eta:K\oo\mc{S} \lra \mc{S}(1)$ is induced by
\[K\oo\mc{O}_{\GG}\hookrightarrow\bw^2 V \oo\mc{O}_{\GG} \onto \bw^2\mc{Q} = \mc{O}_{\GG}(1).\]
With all these preparations in place, the proof of Theorem~\ref{thm:vanishing-koszul} follows by extending the argument for \cite[Theorem~3.1]{koszul-groups} to positive characteristic, as follows.

\begin{proof}[Proof of Theorem~\ref{thm:vanishing-koszul}]
If $W_q(V,K)= 0$ for $q\geq n-3$ then $\mathcal{R} (V,K)=\{ 0\}$ by (\ref{eq:mainequiv}). For the converse, it suffices to prove that $W_{n-3}(V,K)$: since the module $W(V,K)$ is generated in degree zero, this implies that $W_q(V,K)=0$ for all $q\geq n-3$. The assumption that $\mathcal{R} (V,K)=\{ 0\}$ yields using (\ref{eq:mainequiv}) that $(K,\mc{O}_{\GG}(1))$ is base point free, and hence $K\oo\mc{O}_{\GG}\lra\mc{O}_{\GG}(1)$ is surjective. This gives rise to an exact Koszul complex
\[\mc{K}^{\bullet}: 0\lra \bw^m K\oo \mc{O}_{\GG}(1-m) \lra \cdots \lra \bw^2 K\oo \mc{O}_{\GG}(-1) \lra K \oo \mc{O}_{\GG} \lra \mc{O}_{\GG}(1) \lra 0\]
where $\mc{K}^{-i} = \bw^i K \oo \mc{O}_{\GG}(1-i)$. Tensoring with $\Sym^{n-3}\mc{Q}$ we get a hypercohomology spectral sequence $E_1^{-i,j} = H^j\left(\GG,\mc{K}^{-i}\oo\Sym^{n-3}\mc{Q}\right)  \Longrightarrow 0$, where
 \begin{equation}\label{eq:Eij}
  E_1^{-i,j} = \bw^i K \oo H^j\left(\GG,\bb{S}_{(n-2-i,1-i)}\mc{Q}\right).
 \end{equation}
 We have using (\ref{eq:WVK-from-cohom}) that
 \[W_{n-3}(V,K) = \coker\bigl(E^{-1,0}_1 \lra E^{0,0}\bigr)\]
 and we suppose by contradiction that this map is not surjective. Since $E_{\infty}^{0,0}=0$, there must be some non-zero differential
 \[E^{-r,r-1}_r \lra E_r^{0,0}\mbox{ for }r\geq 2,\mbox{ or }E_r^{0,0} \lra E_r^{r,1-r}\mbox{ for }r\geq 1.\]
 Since $E_1^{r,1-r}=0$ for all $r\geq 1$ it follows that $E_r^{r,1-r}=0$ as well, so the latter case does not occur. To prove that the former case does not occur either and obtain a contradiction, it suffices to check that $E^{-r,r-1}_1=0$ for all $r\geq 2$, which follows from (\ref{eq:Eij}) and from conclusion (\ref{it:3}) of Theorem~\ref{thm:bott}.
\end{proof}


\section{Symmetric, divided and exterior powers, and Hermite reciprocity}
\label{sec:Hermite-rec}

The goal of this section is to study various tensor constructions in arbitrary characteristic, that will be used in our analysis of the syzygies of the tangent developable to a rational normal curve in Section~\ref{sec:repth}. For a more thorough discussion, we refer the reader to \cite{ABW}.
In Section~\ref{subsec:Sym-D} we introduce the symmetric, divided and exterior powers of a vector space $V$, which we denote by $\Sym^{\bullet}V$, $\D^{\bullet}V$, and $\bw^{\bullet}V$ respectively. In Section~\ref{subsec:Veronese} we explain how the natural ambient space of the Veronese embedding is defined in terms of divided powers. We then focus on tensor constructions for a vector space $U$ of dimension $2$, discuss a number of $\sl(U)$-equivariant maps in Section~\ref{subsec:sl2}, and prove a version of Hermite reciprocity in arbitrary characteristic in Section~\ref{subsec:Hermite}: more precisely, we produce an explicit $\sl(U)$-equivariant isomorphism
\[\Sym^d(\D^i U) \lra \bw^i(\Sym^{d+i-1}U)\]
for all $d,i\geq 1$. We work over an arbitrary field $\kk$.


\subsection{Symmetric, divided and exterior powers}
\label{subsec:Sym-D}

Let $V$ denote a finite dimensional vector space over a field $\kk$, and for $d>0$ consider the tensor power $T^d V := V^{\oo d} = V \oo V \oo \cdots \oo V$
with the natural action of the symmetric group $\mf{S}_d$ by permuting the factors. The \defi{divided power $\D^d V$} is defined as the set of symmetric tensors in $T^d V$, that is,
\[ \D^d V := \{ \omega \in T^d V : \s(\omega) = \omega \mbox{ for all }\s \in \mf{S}_d\}.\]
If we consider the subspace of $T^d V$ defined by
\[ \Sigma_d := \op{Span} \{ \s(\omega) - \omega : \s\in \mf{S}_d\mbox{ and }\omega\in T^d V\},\]
then the \defi{symmetric power $\Sym^d V$} is defined as the quotient $\Sym^d V := T^d V / \Sigma_d$.
If $V$ has a basis $(x_1,\ldots,x_n)$, then $\Sym^d V$ identifies with the space of homogeneous polynomials of degree $d$ in $x_1,\ldots,x_n$, and as such it has a basis of monomials $x_1^{a_1}\cdots x_n^{a_n}$, where $a_1+\cdots+a_n=d$. To get a basis for $D^d V$ we first consider the orbits
\[O_{a_1,\ldots,a_n} := \mf{S}_d \cdot x_1^{\oo a_1} \oo x_2^{\oo a_2} \oo \cdots \oo x_n^{\oo a_n}\]
and for $a_1+\cdots+a_n=d$ consider the \defi{divided power monomials}
\[x_1^{(a_1)} \cdots x_n^{(a_n)} := \sum_{\omega\in O_{a_1,\ldots,a_n}} \omega.\]
They form a basis for $D^d(V)$, and in particular we have that $\dim(\Sym^d V) = \dim(\D^d V)$. By composing the inclusion of $\D^d V$ into $T^d V$ with the projection onto $\Sym^d V$ we obtain a natural map
\begin{equation}\label{eq:D-to-Sym}
\D^d V \lra \Sym^d V,\quad x_1^{(a_1)} \cdots x_n^{(a_n)} \mapsto {d \choose a_1,\ldots,a_n} x_1^{a_1}\cdots x_n^{a_n},\mbox{ where }{d \choose a_1,\ldots,a_n} = \frac{d!}{a_1! \cdots a_n!}.
\end{equation}
This map is an isomorphism in characteristic zero, or more generally when the multinomial coefficients are invertible. In general it is neither injective, nor surjective.

\vskip 3pt

We next consider the subspace of $T^d V$ defined by
\[ \Xi_d := \op{Span} \{ v_1\oo \cdots \oo v_n : v_i\in V\mbox{ and }v_i = v_j\mbox{ for some }i\neq j\}\]
and define the \defi{exterior power $\bw^d V$} as the quotient $\bw^d V := T^d V / \Xi_d$.
If we write $v_1\wedge \cdots \wedge v_d$ for the class of $v_1\oo \cdots \oo v_d$ in the quotient, then $\bw^d V$ has a basis consisting of $x_{i_1} \wedge \cdots \wedge x_{i_d}$, where $1\leq i_1 < \cdots < i_d \leq n$. There is a natural inclusion of $\bw^d V$ into $T^d V$ given by
\begin{equation}\label{eq:sub-wedge-d}
 v_1\wedge \cdots \wedge v_d \lra \sum_{\s\in\mf{S}_d} \op{sgn}(\s) \cdot \s(v_1 \oo \cdots \oo v_n).
\end{equation}
This gives a splitting of the quotient map $T^d V \onto \bw^d V$ if and only if $d!$ is invertible in $\kk$.

\vskip 4pt

There is a natural $\mf{S}_n$-invariant perfect pairing
\begin{equation}\label{eq:pairing-TdV}
 T^d(V) \times T^d(V^{\vee}) \lra \kk,
\end{equation}
defined on pure tensors via
\[ \langle v_1\oo\cdots\oo v_d,f_1\oo\cdots\oo f_d\rangle = f_1(v_1) f_2(v_2)\cdots f_d(v_d),\mbox{ where }v_i\in V \mbox{ and } f_j\in V^{\vee}.\]
This induces a perfect pairing between $\Sym^d V$ and $D^d(V^{\vee})$, giving a natural identification
\[ (\Sym^d V)^{\vee} \cong \D^d(V^{\vee}).\]
If $(e_1,\ldots,e_n)$ is the basis of $V^{\vee}$ dual to the basis $(x_1,\ldots,x_n)$ of $V$, then the divided power monomials $e_1^{(a_1)}\cdots e_n^{(a_n)}$ give a basis of $D^d(V^{\vee})$ dual to the monomial basis of $\Sym^d V$.

If we think of $\bw^d V$ as a quotient of $T^d V$, and of $\bw^d V^{\vee}$ as a subspace of $T^d(V^{\vee})$ via (\ref{eq:sub-wedge-d}), the pairing (\ref{eq:pairing-TdV}) induces a perfect pairing $\bw^d V \times \bw^d(V^{\vee}) \lra \kk$, described on decomposable elements via
\[ \langle v_1\wedge\cdots\wedge v_d,f_1\wedge\cdots\wedge f_d\rangle \lra \det\bigl[f_i(v_j)\bigr]_{1\leq i,j\leq d},\mbox{ where }v_i\in V \mbox{ and }  \  f_j\in V^{\vee}.\]
We get a natural isomorphism $\Bigl(\bw^d V\Bigr)^{\vee} \cong \bw^d\bigl(V^{\vee}\bigr)$.
We write $\bw^d V^{\vee}$ for either of the two spaces above. By contrast, we shall be careful in distinguishing between $\Sym^d(V^{\vee})$ and $(\Sym^d V)^{\vee} \cong \D^d(V^{\vee})$!

If $V=U$ is of dimension two, then we get a natural isomorphism $U \cong \bw^2 U \oo U^{\vee}$, which extends to tensor powers to an isomorphism $T^d(U) \cong \bigl(\bw^2 U\bigr)^{\oo d} \oo T^d(U^{\vee})$. Restricting to $\mf{S}_n$-invariant tensors, we obtain an isomorphism $\D^d(U) \cong \bigl(\bw^2 U\bigr)^{\oo d} \oo \D^d(U^{\vee})$, which will be used  in Sections~\ref{sec:resln-R-bar} and~\ref{sec:resln-C}.

\subsection{The Veronese embedding}
\label{subsec:Veronese}



 We fix a vector space $U$ of dimension two and denote by $\PP^1:=\mathbb P\bigl(U^{\vee}\bigr)$
the projective space of $1$-dimensional subspaces of $U^{\vee}$. Then $H^0\bigl(\PP^1,\mc{O}_{\PP^1}(d)\bigr) = \Sym^d U$, which is the vector space of linear forms on $(\Sym^d U)^{\vee} \cong \D^d(U^{\vee})$. Setting $\PP^d:=\mathbb P\bigl(D^d(U^{\vee})\bigr)$, the linear series $|\mc{O}_{\PP^1}(d)|$ gives an embedding
$$\nu_d : \PP^1 \lra \PP^d,  \ \ \ [f]\mapsto  [f \oo \cdots \oo f].$$

\vskip 3pt

If $U$ has a  basis $(x_1,x_2)$, we consider the induced monomial bases for $\Sym^d U$ and $\D^d U$ as in Section~\ref{subsec:Sym-D}. For simplicity, it will be convenient to use a \emph{dehomogenized} notation, where
\begin{equation}\label{eq:conv-x1x2-x}
x_1=x,\ x_2=1,\ x_1^i x_2^{d-i} = x^i \mbox{ and }x_1^{(i)} x_2^{(d-i)} = x^{(i)}.
\end{equation}
If we write $(1,y)$ for the basis of $U^{\vee}$ dual to $(1,x)$ and use the analogous conventions to (\ref{eq:conv-x1x2-x}), then the Veronese map $\nu_d:\PP^1\lra\PP^d$ is expressed as
\[[a+by] \lra [(a+by)\oo\cdots\oo(a+by)] = [a^d\cdot y^{(0)} + a^{d-1}b\cdot y^{(1)} + a^{d-2}b^2\cdot y^{(2)} + \cdots + b^d \cdot y^{(d)}],\]
that is, relative to the chosen bases, $\nu_d$ takes the familiar form $[a:b] \lra [a^d:a^{d-1}b:a^{d-2}b^2:\cdots:b^d]$.


\subsection{Some $\sl_2$-equivariant maps}
\label{subsec:sl2}

We let $U$ as in the previous section, with $\kk$-basis $(1,x)$ and corresponding bases $(1,x,\ldots,x^d)$ and $(x^{(0)},x^{(1)},x^{(2)},\ldots,x^{(d)})$ for $\Sym^d U$ and $\D^d U$ respectively. We let $\sl_2 := \sl(U)$ denote the Lie algebra of $\kk$-endomorphisms of $U$ with trace $0$. We write $\Lop$ (respectively $\Rop$) for the \defi{lowering} (respectively \defi{raising}) operators in $\sl_2$, whose action on $\Sym^d U$ and $\D^d U$ is given by
\[\Lop\cdot x^i = i\cdot x^{i-1},\quad \Lop\cdot x^{(i)} = (d-i+1)\cdot x^{(i-1)},\quad \Rop\cdot x^i = (d-i)\cdot x^{i+1},\quad \Rop\cdot x^{(i)} = (i+1)\cdot x^{(i+1)}.\]
Relative to the conventions (\ref{eq:conv-x1x2-x}), one can think of $\Lop$ as the operator $x_2\cdot\frac{\pd}{\pd x_1}$, and of $\Rop$ as $x_1\cdot\frac{\pd}{\pd x_2}$. The natural map (\ref{eq:D-to-Sym}) is described on the basis elements by
\begin{equation}\label{eq:(i)-i}
 \D^d U \lra \Sym^d U,\qquad x^{(i)} \mapsto {d \choose i}\cdot x^i,
\end{equation}
and can be checked to be $\sl_2$-equivariant. By abuse of notation, whenever we refer to $x^{(i)}$ as an element of $\Sym^d U$ we will interpret it as ${d \choose i}\cdot x^i$.

\vskip 4pt

For $a,b\geq 0$, we define two $\mf{sl}_2$-equivariant maps, namely the \defi{multiplication} map
\begin{equation}\label{eq:multiplication}
 \mu=\mu_U: \Sym^a U \oo \Sym^b U \lra \Sym^{a+b}U,\quad x^i \oo x^j \lra x^{i+j}
\end{equation}
and the \defi{co-multiplication} map
\begin{equation}\label{eq:co-multiplication}
\Delta=\Delta_U: \D^{a+b}U \lra \D^a U \oo \D^b U,\quad x^{(t)} \lra \sum_{\substack{i+j=t \\ i\leq a,\ j\leq b}} x^{(i)}\oo x^{(j)}.
\end{equation}

We may abuse language and refer to $\Delta$ as the \defi{dual} of $\mu$: the correct interpretation of this statement is that via the isomorphism $\D^d(U^{\vee})\cong (\Sym^d U)^{\vee}$ one has that $\mu_U$ is dual to $\Delta_{U^{\vee}}$, and $\mu_{U^{\vee}}$ is dual to $\Delta_U$. We shall need in Proposition~\ref{prop:resolution-Rbar} the map $\Delta^{(2)}:\D^{a+2}U \lra \D^a U \oo \Sym^2 U$ obtained as the composition:
\[\Delta^{(2)}:\D^{a+2}U \overset{(\ref{eq:co-multiplication})}{\lra} \D^a U \oo \D^2 U \overset{(\ref{eq:(i)-i})}{\lra} \D^a U \oo \Sym^2 U.\]
It is important to observe that the following diagram is commutative:
\begin{equation}\label{eq:comm-diag-co-multiplication}
\begin{aligned}
\xymatrix{
 \D^{a+2}U \ar[rr]^{\Delta^{(2)}} \ar[d]_{\Delta} & & \D^a U \oo \Sym^2 U \\
 \D^{a+1}U \oo U \ar[rr]^{\Delta\oo\op{id}_U} & & \D^{a}U \oo U \oo U \ar[u]_{\op{id}_{\D^aU} \oo \mu}
}
\end{aligned}
\end{equation}

\begin{lemma}\label{lem:mu1-delta1}
 Let $a\geq 1$. There exist $\sl_2$-equivariant maps
 \[\mu_1 : \Sym^a U \oo \Sym^a U \lra \Sym^{2a-2}U\mbox{ and } \Delta_1: \D^{2a-2}U \lra \D^a U \oo \D^a U\]
defined by  \ $\mu_1(x^i \oo x^j) = (i-j) \cdot x^{i+j-1}$ and respectively

 \begin{equation}\label{eq:delta1}
 \Delta_1(x^{(t)}) = \sum_{i+j = t+1} (i-j)\cdot x^{(i)}\oo x^{(j)}
 \end{equation}
 Moreover, if $\op{char}(\kk)\neq 2$ then $\mu_1$ is surjective and $\Delta_1$ is injective.
\end{lemma}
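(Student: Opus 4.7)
The plan is to check the three assertions in sequence: well-definedness together with $\sl_2$-equivariance of both maps, then surjectivity of $\mu_1$, and finally injectivity of $\Delta_1$ via a duality argument.

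For equivariance of $\mu_1$, the formulas already specify a linear map on a basis, so existence is automatic. To verify that it commutes with the lowering operator $\Lop$, I would just compute on basis elements:
\[ \Lop\,\mu_1(x^i\oo x^j) = (i-j)(i+j-1)\,x^{i+j-2},\]
while $\mu_1\Lop(x^i\oo x^j) = i(i-1-j)\,x^{i+j-2} + j(i-j+1)\,x^{i+j-2}$, which expands to the same expression. An analogous one-line check handles $\Rop$. For $\Delta_1$ I would either perform the dual direct calculation on divided-power monomials using the formulas for $\Lop,\Rop$ acting on $x^{(i)}$, or simply quote the duality identification described below.

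For surjectivity of $\mu_1$, I fix $0\leq k\leq 2a-2$ and look for $i,j$ with $i+j=k+1$, $0\leq i,j\leq a$, and $i-j\neq 0$ in $\kk$: then $\mu_1(x^i\oo x^j)=(i-j)\,x^k$ spans $\kk\cdot x^k$. The set of admissible $i$ is the integer interval $[\max(0,k+1-a),\min(a,k+1)]$, which always contains at least two consecutive values. The obstruction $i=j$ forces $2i=k+1$: if $k$ is even no integer $i$ does this, while if $k$ is odd the unique offending value is $i=(k+1)/2$, and I can avoid it by using the other available choice, at the cost of dividing by $2$ in $\kk$ (which requires $\chr(\kk)\neq 2$). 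Ranging $k$ over $0,\dots,2a-2$ then covers a basis of $\Sym^{2a-2}U$.

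For injectivity of $\Delta_1$ I would argue by duality. Using the perfect pairing $\D^d U\times\Sym^d(U^\vee)\to\kk$ of Section~\ref{subsec:Sym-D}, with $x^{(i)}$ dual to $y^i$, I compute the transpose of $\mu_1$ on the dual basis:
\[ \bigl\langle \mu_1^\vee(y^{(t)}),\, x^i\oo x^j\bigr\rangle = (i-j)\,\delta_{t,i+j-1},\]
so $\mu_1^\vee(y^{(t)})=\sum_{i+j=t+1}(i-j)\,y^{(i)}\oo y^{(j)}$. This shows that, under the identification $(\Sym^d U)^\vee\cong\D^d(U^\vee)$, the transpose of $\mu_1$ for $U$ is exactly $\Delta_1$ for $U^\vee$. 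Applying this observation with $U$ replaced by $U^\vee$, injectivity of $\Delta_1$ for $U$ becomes equivalent to surjectivity of $\mu_1$ for $U^\vee$, which I have just established. The only real place care is needed is the duality bookkeeping: one must track that the monomial basis of $\Sym^d U$ and the divided-power basis of $\D^d(U^\vee)$ are dual to one another, and not conflate $x^{(i)}$ as an element of $\D^d U$ with its image $\binom{d}{i}x^i$ in $\Sym^d U$.
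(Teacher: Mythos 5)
Your proof follows the same route as the paper: verify $\sl_2$-equivariance of $\mu_1$ by direct commutation with $\Lop$ and $\Rop$, observe that $\Delta_1$ is the dual of $\mu_1$ so that injectivity of $\Delta_1$ reduces to surjectivity of $\mu_1$ (for $U^\vee$), and then establish that surjectivity. Your duality bookkeeping is simply a more explicit spelling-out of the paper's remark that ``$\Delta_1$ is the dual of $\mu_1$,'' and your $\Lop$-equivariance check is identical, so those parts are sound.

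There is, however, a gap in the surjectivity argument when $k$ is even. You correctly set the target as finding $(i,j)$ with $i+j=k+1$, $0\leq i,j\leq a$, and $i-j\neq 0$ \emph{in $\kk$}, but you then treat $i=j$ (integer equality) as the only obstruction. In characteristic $p\geq 3$ this is not enough: one can have $i\neq j$ as integers while $p\mid(i-j)$, so merely noting ``no integer $i$ satisfies $2i=k+1$'' does not produce a pair with $i-j$ a unit; you never actually select one. For $k$ odd you do make a concrete choice (the adjacent pair, yielding $i-j=\pm2$), which is why $\chr(\kk)\neq2$ enters, but the $k$ even case is left hanging. The repair is contained in an observation you already made: since the admissible interval contains two consecutive values of $i$, the corresponding values of $i-j$ differ by $2$, so at most one can vanish modulo $p$ when $p\neq 2$. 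The paper sidesteps the issue entirely by specifying $(i,j)=(r+1,r)$ for $k=2r$, giving $i-j=1$, and $(i,j)=(r+2,r)$ for $k=2r+1$, giving $i-j=2$; you should make an analogous explicit choice for $k$ even rather than appeal to the absence of the integer obstruction.
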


\begin{proof}
 Note that $\Delta_1$ is the dual of $\mu_1$ (in the same way that $\Delta$ is dual to $\mu$), so it suffices to verify that $\mu_1$ is well-defined, $\sl_2$-equivariant and surjective when $\op{char}(\kk)\neq 2$. Note first that if $(i,j)\neq (0,0),(a,a)$, then $0\leq i+j-1\leq 2a-2$, so $\mu_1(x^i \oo x^j) \in \Sym^{2a-2}U$. If $(i,j) = (0,0)$ or $(i,j) = (a,a)$, then $\mu_1(x^i \oo x^j) = 0$ is still well-defined. To check equivariance, it suffices to verify that $\mu_1$ commutes with the operators $\Lop$ and $\Rop$. The condition $\mu_1(\Lop(x^i \oo x^j)) = \Lop(\mu_1(x^i \oo x^j))$ is equivalent to
 $$i\cdot [(i-1) - j] + j\cdot[ i -  (j-1)] = (i+j-1)\cdot(i-j),$$
 which follows by inspection. Similarly, $\mu_1(\Rop(x^i \oo x^j)) = \Rop(\mu_1(x^i \oo x^j))$ follows from a direct calculation.
Consider any $0\leq t\leq 2a-2$. If $t=2r$ then $\mu_1(x^{r+1}\oo x^r) = x^t$, so $x^t\in\Im(\mu_1)$. If $t=2r+1$ then $\mu_1(x^{r+2}\oo x^r) = 2x^t$, so $x^t\in\Im(\mu_1)$ when $2$ is invertible in $\kk$. This proves the surjectivity of $\mu_1$.
\end{proof}

From the definition of $\mu_1$, observe that $\mu_1(x^i\oo x^i) = 0$ for all $i$, and $\mu_1(x^i \oo x^j + x^j \oo x^i) = 0$ for $i\neq j$. It follows that $\mu_1(v\oo v)=0$ for all $v\in\Sym^a U$, so $\mu_1$ factors through $\bw^2\Sym^a U$. We get a map
\begin{equation}\label{eq:mu1-to-wedge2}
\mu_1:\bigwedge^2 \Sym^a U\lra \Sym^{2a-2}U,
\end{equation}
which, as explained in the Introduction, can be thought of as the Gaussian--Wahl map of $\mc{O}_{\PP^1}(a)$ (see (\ref{eq:Wahl-map})). Dually, the map $\Delta_1$ in Lemma~\ref{lem:mu1-delta1} can be viewed as a map
\begin{equation}\label{eq:Del1-to-wedge2}
\Delta_1:D^{2a-2}U\lra \bigwedge^2 D^a U.
\end{equation}

\vskip 3pt

\subsection{A Hermite type identification in arbitrary characteristic.}
\label{subsec:Hermite}

Hermite reciprocity \cite{hermite} over a field $\kk$ of characteristic $0$ asserts the existence of an $\sl_2$-isomorphism $\Sym^d(\Sym^i U)\cong \Sym^i(\Sym^d U)$, see \cite[Exercise ~11.34]{FH}. Under the same assumption, one has the isomorphism $\Sym^d(\Sym^i U)\cong \bw^d(\Sym^{d+i-1}U)$, or equivalently $\Sym^d(\Sym^i U)\cong \bw^i(\Sym^{d+i-1}U)$, see \cite[Exercise ~11.35]{FH}. These isomorphisms are usually proved non-constructively by identifying the characters of the two representations, and they no longer hold in positive characteristic. The goal of this section is to describe an explicit $\sl_2$-isomorphism
\begin{equation}\label{eq:isom-psi_d}
\psi_d^i: \Sym^d(\D^i U) \lra \bw^i(\Sym^{d+i-1}U)
\end{equation}
in arbitrary characteristic.

\begin{rmk} Since $\bw^2 U$ is isomorphic to the trivial $\sl_2$-representation, we get that $U\cong U^{\vee}$. More generally, if $V$ is an $\sl_2$-representation with $N=\dim(V)$ then we have isomorphisms $\bw^r V \cong \bw^{N-r}V^{\vee}$.
Combining this with $(\Sym^d V)^{\vee} \cong \D^d(V^{\vee})$, we get a chain of $\sl_2$-isomorphisms
\[\Sym^d(\D^i U) \cong \bw^i(\Sym^{d+i-1}U) \cong \bw^d(\Sym^{d+i-1}U)^{\vee} \cong \bw^d(\D^{d+i-1}(U^{\vee})) \cong \bw^d(\D^{d+i-1}U),\]
which is an arbitrary characteristic analogue of \cite[Exercise~11.35]{FH}. Moreover,
\[\Sym^d(\D^i U) \cong \bw^d(\Sym^{d+i-1}U)^{\vee}\cong (\Sym^i(\D^d U))^{\vee}\cong \D^i(\Sym^d(U^{\vee})) \cong \D^i(\Sym^d U)\]
is an arbitrary characteristic version of Hermite reciprocity \cite[Exercise ~11.34]{FH}.
\end{rmk}

To construct (\ref{eq:isom-psi_d}) we identify both sides with an appropriate subspace of the ring of symmetric polynomials, and construct $\psi_d^i$ from the change of basis between elementary symmetric and Schur polynomials.

\vskip 4pt

For $i>0$ we let $\mc{P}_i$ be the collection of partitions with at most $i$ parts: an element $\ll\in\mc{P}_i$ is written as $\ll=(\ll_1,\ll_2,\ldots,\ll_i)$, with $\ll_1\geq\ll_2\geq\ldots\geq\ll_i\geq 0$. We let $\mc{P}'_i$ denote the collection of partitions with parts of size at most $i$, write $\ll'$ for the conjugate of a partition $\ll$, and note that $\ll\in\mc{P}_i$ if and only if $\ll'\in\mc{P}'_i$. We let $\LL_i \subseteq \kk[z_1,\ldots,z_i]$ denote the ring of symmetric polynomials in $z_1,\ldots,z_i$, and consider the following two well-studied bases of $\LL_i$ (see \cite[Chapter~I]{macdonald}), indexed by~$\mc{P}_i$ and $\mc{P}'_i$:
\begin{itemize}
 \item For every $\ll\in\mc{P}_i$, the \defi{Schur polynomial} $s_{\ll}$ is defined via
 \[s_{\ll}(z_1, \ldots, z_i) = \frac{\det(z_k^{\ll_{\ell} + i - k})_{1\leq k,\ell\leq i}}{\det(z_k^{i - k})_{1\leq k,\ell\leq i}}.\]
 The collection $\{s_{\ll}:\ll\in\mc{P}_i\}$ is a basis for $\LL_i$, which we call the \defi{Schur basis}.
 \item The \defi{elementary symmetric polynomials} $e_0=1,e_1,\ldots, e_i$ are defined via the equality
 \[e_0 + e_1\cdot T + e_2\cdot T^2 + \cdots + e_i\cdot T^i = (1+z_1\cdot T)\cdots(1+z_i\cdot T),\]
 where $T$ is an auxiliary variable. For each $\mu\in\mc{P}'_i$ let $e_{\mu}:= e_{\mu_1}\cdot e_{\mu_2}\cdots$.
The collection $\{e_{\mu}:\mu\in\mc{P}'_i\}$ is also a basis for $\LL_i$, which we call the \defi{elementary symmetric basis}.
\end{itemize}

For each $j=0,\ldots, i$, one has $e_j = s_{(1^j)}$, where $(1^j) = (1,1,\ldots,1)$. More generally, for $d\geq 1$ we consider
the following collections of partitions
\[\mc{P}_i(d):= \bigl\{\ll\in\mc{P}_i : \ll_1\leq d\bigr\},\qquad \mc{P}'_i(d) := \bigl\{\mu\in\mc{P}'_i : \mu_{d+1}=0\bigr\}.\]
The correspondence $\ll\llra\ll'$ induces a bijection between $\mc{P}_i(d)$ and $\mc{P}'_i(d)$. Moreover, we have that $\{s_{\ll}:\ll\in\mc{P}_i(d)\}$ spans the same subspace of $\LL_i$ as $\{e_{\mu}:\mu\in\mc{P}'_i(d)\}$. We denote this subspace by $\LL_i(d)$, and get in this way a filtration of $\LL_i$ satisfying $\LL_i(d)\cdot\LL_i(d')=\LL_i(d+d')$, for all $d,d'\geq 0$.

\vskip 4pt

The multiplication map on $\LL_i$ is easy to describe with respect to the elementary symmetric basis, but it is more subtle (based on the Littlewood--Richardson rule) with respect to the Schur basis. For our purposes it is sufficient to consider products of the form $s_{\ll}\cdot s_{(1^j)} = s_{\ll} \cdot e_j$, where the multiplication is described by Pieri's rule. To describe it we first make some conventions. We write $[i]:= \{1,\ldots,i\}$ and let ${[i] \choose k}$ be the collection of $k$-element subsets of $[i]$.
\begin{itemize}
 \item We extend the definition of $s_{\ll}$ for all $\ll\in\bb{Z}^i_{\geq 0}$ by setting $s_{\ll}=0$ when $\ll\notin\mc{P}_i$, that is, when there exists some index $j$ for which $\ll_j<\ll_{j+1}$.
 \item If $I\subseteq [i]$, we write $(1^I)$ for the element $\ll\in\bb{Z}^i_{\geq 0}$ with $\ll_j = 1$ for $j\in I$ and $\ll_j = 0$ for $j\notin I$. When $I=[k]$, for $k\leq i$ we have $(1^I) = (1^k)$.
\end{itemize}

\vskip 4pt

With these conventions, Pieri's rule can be stated as follows:
\begin{equation}\label{eq:Pieri}
s_{\ll} \cdot e_j = \sum_{I \in {[i] \choose j}} s_{\ll + (1^I)} = \sum_{I \in {[i] \choose j}, \  \ll + (1^I) \in \mc{P}_i} s_{\ll + (1^I)}  .
\end{equation}

Even though $\sl_2$ does not act apriori on $\LL_i(d)$, we construct an $\sl_2$-equivariant vector space isomorphism $\psi_d$ as in (\ref{eq:isom-psi_d}) by identifying both sides with $\LL_i(d)$. We define the \defi{elementary symmetric basis} of $\Sym^d(\D^i U)$ by associating to each element $\mu\in \mc{P}'_i(d)$ the basis element $e_{\mu}(x) \in \Sym^d(\D^i U)$ defined by
\begin{equation}\label{eq:def-emu}
 e_{\mu}(x) = x^{(\mu_1)} \cdot x^{(\mu_2)}  \cdots x^{(\mu_d)}.
\end{equation}
This choice of basis induces a vector space isomorphism
\[\eps_d^i: \Sym^d(\D^i U) \lra \LL_i(d),\qquad \eps_d^i(e_{\mu}(x)) = e_{\mu}, \mbox{ for all }\mu\in\mc{P}'_i(d).\]
Thinking of $\Sym(\D^i U)$ as a polynomial ring on $x^{(0)},x^{(1)},\ldots,x^{(i)}$, we can define a ring homomorphism $\Sym(\D^i U) \lra \LL_i$ by sending $x^{(0)} \mapsto 1$, $x^{(j)} \mapsto e_j$, for $j=1,\ldots,i$. The map $\eps_d^i$ is then obtained by restricting this homomorphism to $\Sym^d(\D^i U)$. It follows that
\begin{equation}\label{eq:comp-eps}
 \eps_{d+1}^i( m \cdot x^{(j)} ) = \eps_d^i(m) \cdot \eps_1^i(x^{(j)})\mbox{ for all }m\in\Sym^d(\D^i U)\mbox{ and }j=0,\ldots,i.
\end{equation}

\vskip 4pt

We next define the \defi{Schur basis} of $\bw^i(\Sym^{d+i-1}U)$ by associating to an element $\ll\in \mc{P}_i(d)$ the basis element $s_{\ll}(x) \in \bw^i(\Sym^{d+i-1}U)$ defined by
\begin{equation}\label{eq:def-sll}
 s_{\ll}(x):= x^{\ll_1 + i - 1} \wedge x^{\ll_2 + i - 2} \wedge \cdots \wedge x^{\ll_i}.
\end{equation}
This choice of basis induces a vector space isomorphism
\[\s_d^i: \LL_i(d) \lra  \bw^i(\Sym^{d+i-1}U),\qquad \s_d^i(s_{\ll}) = s_{\ll}(x), \mbox{ for all }\ll\in \mc{P}_i(d).\]
The maps $\s_d^i$ satisfy a compatibility analogous to (\ref{eq:comp-eps}) as explained next. For all $\kk$-vector spaces $T$ and $T'$ we have a natural inclusion, see \cite[Theorem~III.2.4]{ABW} and \cite[p.96]{weyman}.
\[\Bigl(\bw^i T\Bigr) \oo \D^i (T') \lra \bw^i\left(T\oo T'\right).\]
Applying this when $T = \Sym^{d+i-1}U$ and $T' = U$, we obtain an $\sl_2$-equivariant inclusion followed by an $\sl_2$-equivariant map induced by $\mu:\Sym^{d+i-1}U\oo U \lra \Sym^{d+i}U$:
\[
\xymatrix{
\bigl(\bw^i \Sym^{d+i-1}U\bigr) \oo \D^i U\ar[r] \ar@/_2pc/[rr]_{\nu_d^i} & \bw^i\bigl(\Sym^{d+i-1}U\oo U\bigr) \ar[r] & \bw^i\bigl(\Sym^{d+i}U\bigr) \\
}
\]
The map $\nu_d^i$ can be expressed concretely as follows: for every $\ll\in\mc{P}_i(d)$ and every $j=0,\ldots,i$, we have
\begin{equation}\label{eq:nud-formula}
 \nu_d^i( s_{\ll}(x) \oo x^{(j)}) = \sum_{I\in {[i] \choose j}} s_{\ll + (1^I)}(x),
\end{equation}
where in order for the right hand side to make sense, we have to extend the definition (\ref{eq:def-sll}) for arbitrary $\ll\in\bb{Z}^i_{\geq 0}$. Notice however that if $\mu = \ll + (1^I)$ appears in (\ref{eq:nud-formula}) and $\mu\in \bb{Z}^i_{\geq 0} \setminus \mc{P}_i(d+1)$ then there exists some index $k\in [i-1]$ with $\mu_{k+1} = \mu_k + 1$, and therefore $\mu_k + i - k = \mu_{k+1} + i - (k+1)$, which yields
\[s_{\mu}(x) = x^{\mu_1 + i - 1} \wedge \cdots \wedge x^{\mu_k + i - k} \wedge x^{\mu_{k+1} + i - (k+1)} \wedge \cdots \wedge x^{\mu_i} = 0.\]
We conclude from (\ref{eq:Pieri}) and (\ref{eq:nud-formula}) that
\begin{equation}\label{eq:comp-sig}
 \s_{d+1}^i( f \cdot e_j) = \nu_d^i( \s_d(f) \oo x^{(j)} ), \mbox{ for all }f\in\LL_i(d)\mbox{ and }j=0,\ldots,i.
\end{equation}

\begin{lemma}\label{lem:psi_d-equivariant}
 For each $d\geq 0$, the vector space isomorphism
\[\psi_d^i: \Sym^d(\D^i U) \lra \bw^i(\Sym^{d+i-1}U)\]
defined by $\psi_d^i = \s_d^i \circ \eps_d^i$ is $\sl_2$-equivariant.
\end{lemma}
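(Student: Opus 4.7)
The plan is to argue by induction on $d$, using the compatibilities (\ref{eq:comp-eps}) and (\ref{eq:comp-sig}) to propagate equivariance from $\psi_d^i$ to $\psi_{d+1}^i$. The base case $d=0$ is essentially automatic: the source $\Sym^0(\D^i U)=\kk$ carries the trivial action, and the target $\bw^i(\Sym^{i-1}U)$ is one-dimensional (since $\dim\Sym^{i-1}U = i$), hence as the top exterior power of an $\sl_2$-representation it is trivial as an $\sl_2$-module. Both $\eps_0^i$ and $\s_0^i$ are the obvious identifications with $\kk$, so $\psi_0^i$ is clearly equivariant.

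For the inductive step, I would combine (\ref{eq:comp-eps}) and (\ref{eq:comp-sig}): applying $\s_{d+1}^i$ to (\ref{eq:comp-eps}) and then invoking (\ref{eq:comp-sig}) gives, for every $m\in\Sym^d(\D^i U)$ and every $j\in\{0,\ldots,i\}$,
\[
\psi_{d+1}^i\bigl(m\cdot x^{(j)}\bigr) \;=\; \nu_d^i\bigl(\psi_d^i(m)\oo x^{(j)}\bigr).
\]
Writing $\phi\colon \Sym^d(\D^i U)\oo\D^i U\onto \Sym^{d+1}(\D^i U)$ for the ring multiplication, this is the identity of linear maps
\[
\psi_{d+1}^i\circ\phi \;=\; \nu_d^i\circ\bigl(\psi_d^i\oo\op{id}_{\D^i U}\bigr).
\]

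Both $\phi$ and the right-hand side of this identity are $\sl_2$-equivariant: $\phi$ is the multiplication in the symmetric algebra and so is functorially equivariant; $\psi_d^i\oo\op{id}$ is equivariant by the induction hypothesis; and $\nu_d^i$ is the composition of the natural inclusion $(\bw^i T)\oo\D^i T'\hookrightarrow\bw^i(T\oo T')$ (built functorially, hence equivariant) with the map induced by the equivariant multiplication $\mu\colon\Sym^{d+i-1}U\oo U\lra\Sym^{d+i}U$. Since $\phi$ is moreover surjective, a short diagram chase promotes the above identity to equivariance of $\psi_{d+1}^i$ itself: for $L\in\sl_2$ and $v\in\Sym^{d+1}(\D^i U)$, writing $v=\phi(u)$ one computes
\[
\psi_{d+1}^i(L\cdot v) \;=\; \nu_d^i\bigl((\psi_d^i\oo\op{id})(L\cdot u)\bigr) \;=\; L\cdot\nu_d^i\bigl((\psi_d^i\oo\op{id})(u)\bigr) \;=\; L\cdot\psi_{d+1}^i(v),
\]
completing the induction.

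I do not anticipate any genuine obstacle here; the only step that deserves care is the verification of the identity $\psi_{d+1}^i\circ\phi = \nu_d^i\circ(\psi_d^i\oo\op{id})$, but this is essentially tautological once (\ref{eq:comp-eps}) and (\ref{eq:comp-sig}) are in hand. The substantive content of the statement lies in those two compatibilities themselves, where Pieri's rule (\ref{eq:Pieri}) and the formula (\ref{eq:nud-formula}) have already done the combinatorial heavy lifting; the equivariance of $\psi_d^i$ is then simply the organized consequence that the two sides of (\ref{eq:isom-psi_d}) are both $\sl_2$-generated, in a compatible way, by $\D^i U$ acting on the trivial top summand.
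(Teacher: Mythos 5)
Your proof is correct and follows essentially the same line as the paper: induction on $d$, with the base case $d=0$ handled by observing the maps are between one-dimensional (hence trivial) $\sl_2$-representations, and the inductive step combining (\ref{eq:comp-eps}) and (\ref{eq:comp-sig}) with the equivariance of $\nu_d^i$ and of the multiplication map. The paper phrases the inductive step via the commutative diagram (\ref{eq:sym-to-wedge}) and reads off equivariance of the bottom row, while you spell out the needed surjectivity of $\phi$ explicitly, but the two arguments are the same.
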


\begin{proof}
 We prove this by induction on $d$. In the case $d=0$ we have
\[\psi_0^i:\kk \lra \bw^i(\Sym^{i-1}U),\quad 1\mapsto x^{i-1}\wedge x^{i-2}\wedge\cdots\wedge 1,\]
which is an $\sl_2$-equivariant isomorphism. Assume now $d>0$ and consider the commutative diagram
 \begin{equation}\label{eq:sym-to-wedge}
 \begin{gathered}
 \xymatrix{
 \Sym^d(\D^i U) \oo \D^i U \ar[r]^(.6){\eps_d^i \oo \eps_1^i} \ar[d] &  \LL_i(d) \oo \LL_i(1) \ar[r]^(.4){\s_d^i \oo (\eps_1^i)^{-1}} \ar[d] &  \bw^i(\Sym^{d+i-1}U) \oo \D^i U \ar[d]^{\nu_d^i} \\
 \Sym^{d+1}(\D^i U) \ar[r]^{\eps_{d+1}^i} & \LL_i(d+1)  \ar[r]^{\s_{d+1}^i} & \bw^i(\Sym^{d+i}U) \\
 }
 \end{gathered}
 \end{equation}
where the left square commutes by (\ref{eq:comp-eps}) while the right square commutes by (\ref{eq:comp-sig}). The left vertical map and $\nu_d^i$ are $\sl_2$-equivariant. The composition  on the top line is $(\s_d^i \oo (\eps_1^i)^{-1}) \circ (\eps_d^i \oo \eps_1^i) = \psi_d^i \oo \mbox{id}$,
which is $\sl_2$-equivariant since $\psi_d^i$ is $\sl_2$-equivariant by induction. It follows that $\psi_{d+1}^i$, which is the composition of the maps on the bottom line of the diagram, is $\sl_2$-equivariant, concluding our induction.
\end{proof}

For later use, we record that the Koszul differential gives rise to an $\sl_2$-equivariant inclusion
\[ k_i^d : \bw^i  \Sym^{d}U \lra \bw^{i-1} \Sym^{d}U \oo  \Sym^{d}U, \mbox{ given by}\]
\begin{equation}\label{eq:dkos-slam}
 k_i^d(s_{\ll}(x)) = \sum_{j=1}^i (-1)^{j-1} s_{\ll^{\hat{j}}}(x) \oo x^{\ll_j + i - j}, \mbox{ for }\ll\in\mc{P}_i(d-i+1), \ \mbox{ where}
\end{equation}
\begin{equation}\label{eq:lam-hat-j}
\ll^{\hat{j}} = (\ll_1+1,\ldots,\ll_{j-1}+1,\ll_{j+1},\ldots,\ll_i) \in \mc{P}_{i-1}(d-i+2).
\end{equation}

\begin{remark}\label{hermite_geometric} When $\mathrm{char}(\kk)=0$, the Hermite identification $\Sym^d(\Sym^i U)\cong \bw^i(\Sym^{d+i-1}U)$ has a nice interpretation in terms of a simple calculation on the Hilbert scheme of points on  $\PP^1:=\mathbb P\bigl(U^{\vee}\bigr)$.
We thank Ein and Lazarsfeld for drawing our attention to this fact, which links our algebraic approach to Voisin's geometric strategy \cite{V02, V05} of interpreting syzygies as sections of tautological sheaves on Hilbert schemes. In the interest of the geometrically minded reader we recall this fact. We fix integers $i$ and $d$ and identify $\Sym^i(\PP^1)\cong \PP^i=\mathbb P \bigl((\Sym^i U)^{\vee}\bigr).$ Following \cite[5.1]{AN10}, we denote by $\Xi\subseteq \Sym^i(\PP^1)\times \PP^1$ the incidence correspondence endowed with the projections $p:\Xi\rightarrow \PP^1$ and $q:\Xi\rightarrow \Sym^i(\PP^1)$ respectively. 
We set $L:=\mathcal{O}_{\PP^1}(d+i-1)$ and consider the rank $i$ vector bundle
$L^{[i]}:=q_*p^*(L)$. On the one hand $\mbox{det}(L^{[i]})=\OO_{\PP^i}(d)$, see for instance \cite[Lemma 5.8]{AN10}, therefore 
\begin{equation}\label{hilb1}
H^0\bigl(\Sym^i(\PP^1), \mathrm{det}\ L^{[i]}\bigr)\cong H^0\bigl(\PP^i,\OO_{\PP^i}(d)\bigr)=\Sym^d(\Sym^i U).
\end{equation}
On the other hand, the evaluation map $H^0(\PP^1,L)\otimes \mathcal{\OO}_{\Sym^i(\PP^1)}\rightarrow L^{[i]}$ induces an isomorphism at the level of global sections, see for instance \cite[Lemma 5.2]{AN10}
\begin{equation}\label{hilb2}
H^0\bigl(\Sym^i(\PP^1), \mathrm{det}\ L^{[i]}\bigr)\cong \bigwedge^i H^0(\PP^1,L)=\bigwedge^i (\Sym^{d+i-1} U).
\end{equation}
Even in characteristic zero, it is however not immediately clear that the identification obtained by comparing the isomorphisms 
(\ref{hilb1}) and (\ref{hilb2}) matches the explicit $\mathfrak{sl}_2$-isomorphism $\psi_d^i$ constructed in (\ref{eq:isom-psi_d}).
\end{remark}  

\section{Syzygies and the Kempf--Weyman geometric technique}
\label{sec:KW}

The goal of this section is to summarize a number of standard results that relate to syzygies, minimal resolutions, and Koszul cohomology groups, and to establish some relevant notation that will be used in the rest of the article. In Section~\ref{subsec:syz-koszul} we discuss a relationship between Koszul cohomology groups and kernel bundles, which will be important in Section~\ref{sec:moduli-Green}. In Section~\ref{subsec:minimization} we describe the minimization of the mapping cone for certain morphisms of complexes, that will be used in the proof of Corollary~\ref{cor:C_i=ker-pi+1} and in that of Theorem~\ref{cor=kw}. Finally, in Section~\ref{subsec:Kempf-Weyman} we recall the basic aspects of the Kempf--Weyman geometric technique for constructing syzygies, that will be featured several times in Sections~\ref{sec:resln-R-bar} and \ref{sec:resln-C}.

\subsection{Syzygies and Koszul cohomology}
\label{subsec:syz-koszul}

We begin by recalling a few basic facts and notation regarding Koszul cohomology groups, and we refer the reader to \cite{G84} and \cite{AN10} for more details. Let $X$ be a projective variety over a field $\kk$, let $L$ be a very ample line bundle on $X$ and consider the associated embedding  $\varphi_L:X\lra \mathbb P\bigl(H^0(X,L)^{\vee}\bigr)\cong \mathbb P^r$. For a sheaf $\mc{F}$ on $X$, we consider the module
$$\Gamma_X(\mc{F}, L):=\bigoplus_{n\geq 0} H^0(X,\mc{F}\otimes L^{n}),$$
which can be regarded as a graded module over the polynomial algebra $S:=\Sym  H^0(X,L)$. For $i,j\geq 0$, the \defi{Koszul cohomology group of $i$-th order syzygies of weight $j$} is defined by
$$K_{i,j}(X,\mc{F}, L):=\mbox{Tor}^S_i\bigl(\Gamma_X(\mc{F}, L),\kk\bigr)_{i+j},$$
or equivalently as the middle homology of the $3$-term complex
\begin{equation}\label{eq:Kij-3term}
 \bw^{i+1} H^0(X,L) \oo \Gamma_X(\mc{F}, L)_{j-1} \lra \bw^i H^0(X,L) \oo \Gamma_X(\mc{F}, L)_j \lra \bw^{i-1} H^0(X,L) \oo \Gamma_X(\mc{F}, L)_{j+1}.
\end{equation}
When $\mc{F}=\OO_X$, we write as usual $\Gamma_X(L):=\Gamma_X(\OO_X,L)$ and $K_{i,j}(X,L):=K_{i,j}(X,\OO_X, L)$. The Koszul cohomology groups give rise to the \defi{minimal graded free $S$-resolution} of $\Gamma_X(\mc{F}, L)$: this is a complex $F_{\bullet}$ of graded $S$-modules, unique up to isomorphism, producing an exact sequence
\[ \cdots \lra F_{i+1} \lra F_i \lra \cdots \lra F_1 \lra F_0 \lra \Gamma_X(\mc{F},L) \lra 0,\]
and satisfying $F_i=\bigoplus_{j\geq 0} S(-i-j)\otimes K_{i,j}(X,\mc{F},L)$ for all $i\geq 0$. Here $S(-d)$ denotes a free graded $S$-module of rank one, generated in degree $d$. We define the \defi{kernel bundle} $M_L$ associated to the pair $(X,L)$, as the kernel of the evaluation map on sections, that is, via the exact sequence
\begin{equation}\label{lazb}
0\longrightarrow M_{L}\longrightarrow H^0(X,L) \otimes \mathcal{O}_X \longrightarrow L\longrightarrow 0,
\end{equation}
where the surjectivity on the right follows from the global generation of $L$. Applying the construction (\ref{lazb}) to the pair $(\bb{P}^r,\OO_{\bb{P}^r}(1))$, we get a short exact sequence
\begin{equation}\label{lazb-on-Pr}
0\longrightarrow M_{\OO_{\bb{P}^r}(1)}\longrightarrow H^0(\bb{P}^r,\OO_{\bb{P}^r}(1)) \otimes \OO_{\bb{P}^r} \longrightarrow \OO_{\bb{P}^r}(1) \longrightarrow 0.
\end{equation}
In fact (\ref{lazb}) is the pull-back of (\ref{lazb-on-Pr}) along $\varphi_L$. Moreover, (\ref{lazb-on-Pr}) can be identified with the twist by $\OO_{\bb{P}^r}(1)$ of the Euler sequence on $\bb{P}^r$. The proof of the following result appears
in \cite[Proposition 2.8]{AN10}.

\vskip 4pt

\begin{lemma}
 Let $L$ be a very ample line bundle on $X$. Then for $i\geq 0$ we have that
\begin{equation}\label{koszint}
K_{i+1,1}(X,L) \cong \ker\Bigl\{H^0\bigl(\bb{P}^r,\bw^i M_{\OO_{\bb{P}^r}(1)}(2)\bigr) \overset{\a}{\lra} H^0\bigl(X,\bw^i M_L \oo L^{2}\bigr)\Bigr\},
\end{equation}
where $\a$ is the natural map induced by the identifications $\varphi_L^*(\OO_{\bb{P}^r}(1))=L$ and $\varphi_L^*(M_{\OO_{\bb{P}^r}(1)})=M_L$.
\end{lemma}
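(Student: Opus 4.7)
The plan is to compute $K_{i+1,1}(X,L)$ via the kernel bundle sequence on $X$, perform the same computation on $\bb{P}^r$, and then compare the two via the naturality of the construction under $\varphi_L$. First I would take the $(i{+}1)$-st exterior power of $0 \to M_L \to V \oo \OO_X \to L \to 0$ (with $V := H^0(X,L)$), twist by $L$, and take global sections to produce
\[0 \to H^0\bigl(X, \bw^{i+1} M_L \oo L\bigr) \to \bw^{i+1} V \oo H^0(X,L) \xrightarrow{\psi_X} H^0\bigl(X, \bw^i M_L \oo L^{2}\bigr).\]
The composition of $\psi_X$ with the natural injection $H^0(X, \bw^i M_L \oo L^{2}) \hookrightarrow \bw^i V \oo H^0(X, L^{2})$ coincides with the Koszul differential $d_2$ from (\ref{eq:Kij-3term}) (for $\mc{F} = \OO_X$ and $j=1$), whence $\ker(d_2) = H^0\bigl(X, \bw^{i+1} M_L \oo L\bigr)$.

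Next I would run the identical construction on $\bb{P}^r$ with $\OO_{\bb{P}^r}(1)$, producing $\psi_{\bb{P}^r}:\bw^{i+1} V \oo V \to H^0\bigl(\bb{P}^r, \bw^i M_{\OO_{\bb{P}^r}(1)}(2)\bigr)$. Since the homogeneous coordinate ring of $\bb{P}^r$ equals $S = \Sym V$, its minimal resolution is trivial and $K_{p,q}(\bb{P}^r, \OO_{\bb{P}^r}(1)) = 0$ for all $(p,q)\neq (0,0)$. Two consequences: the vanishing of $K_{i+1,1}(\bb{P}^r, \OO(1))$ forces the image of the Koszul differential $d_1 : \bw^{i+2} V \to \bw^{i+1} V \oo V$ (which is defined intrinsically, independently of $X$) to equal $\ker(\psi_{\bb{P}^r}) = H^0\bigl(\bb{P}^r, \bw^{i+1} M_{\OO(1)}(1)\bigr)$; and the vanishing of $K_{i,2}(\bb{P}^r, \OO(1))$, applied to the analogous identification at the next stage of the Koszul complex, forces $\psi_{\bb{P}^r}$ itself to be surjective, since $\coker(\psi_{\bb{P}^r})$ is exactly $K_{i,2}(\bb{P}^r,\OO(1))$.

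Naturality of the kernel bundle construction, together with $M_L = \varphi_L^*(M_{\OO_{\bb{P}^r}(1)})$ and $L = \varphi_L^*\OO_{\bb{P}^r}(1)$, yields a commutative diagram whose top row is the short exact sequence
\[0 \to H^0\bigl(\bb{P}^r, \bw^{i+1} M_{\OO(1)}(1)\bigr) \to \bw^{i+1} V \oo V \xrightarrow{\psi_{\bb{P}^r}} H^0\bigl(\bb{P}^r, \bw^i M_{\OO(1)}(2)\bigr) \to 0,\]
whose bottom row is the analogous (a priori not right-exact) sequence on $X$, whose middle vertical is the identity, and whose right vertical is $\alpha$. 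Combining $\ker(d_2) = \ker(\psi_X)$ with $\im(d_1) = \ker(\psi_{\bb{P}^r})$, we obtain $K_{i+1,1}(X,L) = \ker(\psi_X)/\ker(\psi_{\bb{P}^r})$. Sending $[y] \mapsto \psi_{\bb{P}^r}(y)$ defines a natural map from this quotient to $\ker(\alpha)$; it is visibly injective because $\ker(\psi_{\bb{P}^r}) \subseteq \ker(\psi_X)$, and it is surjective precisely because $\psi_{\bb{P}^r}$ is surjective.

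The main point requiring care is the verification that $\psi_X$, which is defined sheaf-theoretically through the kernel bundle exact sequence, really coincides (after composing with the natural injection $H^0(\bw^i M_L \oo L^2) \hookrightarrow \bw^i V \oo H^0(L^2)$) with the explicit Koszul differential $d_2$ from (\ref{eq:Kij-3term}); once this matching is unwound from the two definitions, the argument reduces to the easy vanishings $K_{p,q}(\bb{P}^r, \OO(1))=0$ for $(p,q)\neq (0,0)$ and a one-step diagram chase.
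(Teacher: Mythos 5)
The paper offers no proof of its own here, deferring entirely to the citation [AN10, Proposition~2.8]; your kernel-bundle argument is correct and is, to my knowledge, essentially the one given there: identify $\ker(d_2)$ on both $X$ and $\bb{P}^r$ with $H^0$ of the twisted exterior power of the kernel bundle, use the vanishing of $K_{i+1,1}$ and $K_{i,2}$ on $\bb{P}^r$ to get $\im(d_1)=\ker(\psi_{\bb{P}^r})$ and surjectivity of $\psi_{\bb{P}^r}$, and conclude by a short diagram chase. All steps check out, including the compatibility $\psi_X=\alpha\circ\psi_{\bb{P}^r}$ coming from pulling back the relevant exact sequence of sheaves along $\varphi_L$.
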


\subsection{Minimizations and mapping cones}
\label{subsec:minimization}

Let $S$ be a graded polynomial ring over a field $\kk$. If $A$ is any graded $S$-module, we write $\ol{A} = A \oo_S \kk$ for the reduction of $A$ modulo the maximal homogeneous ideal. If $\phi:A\lra B$ is a map of $S$-modules then $\ol{\phi}:\ol{A}\lra\ol{B}$ denotes the induced map $\phi\,\oo_S \op{id}_\kk$. Similarly, if $A_{\bullet}$ is any complex of graded $S$-modules then $\ol{A}_{\bullet} = A_{\bullet} \oo_S \kk$.

Recall that any complex $F_\bullet$ of finitely-generated, graded, free $S$-modules is a direct sum of a minimal complex  $M_{\bullet}$ and an exact complex. The complex $M_{\bullet}$ in question is unique up to isomorphism and is called \emph{the minimization of $F_\bullet$}. Explicitly, the terms of $M_{\bullet}$ are computed by
\begin{equation}\label{eq:minimization-M}
M_i = H_i(\ol{F}_\bullet)\otimes_\kk S = \bigoplus_{j\in\bb{Z}} H_i(\ol{F}_\bullet)_j\otimes_\kk S,
\end{equation}
where $H_i(\ol{F}_\bullet)_j$ is a graded vector space concentrated in degree $j$, so that $H_i(\ol{F}_\bullet)_j\otimes_\kk S$ is a direct sum of copies of $S(-j)$. On several occasions in Section~\ref{sec:repth}, we shall need to compute the minimization of a mapping cone, and in each case the following result will apply.

\begin{lem}
\label{lem:minimization}
Let $F_\bullet$ and $G_\bullet$ be two minimal complexes of finitely-generated, free, graded $S$-modules, and let $f_\bullet:F_\bullet\to G_\bullet$ be a morphism of complexes such that for all $i$, all the entries of the matrix representing $f_i$ are constant. The terms of the minimization $M_{\bullet}$ of the mapping cone of $f_\bullet$ are $M_i=\ker(f_{i-1})\oplus \coker(f_i)$.
\end{lem}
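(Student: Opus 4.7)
The plan is to apply formula (\ref{eq:minimization-M}) to the mapping cone $C_\bullet$ of $f_\bullet$, and to carry out the homology computation by hand using the minimality of $F_\bullet$ and $G_\bullet$ together with the constant-entry hypothesis on $f_\bullet$.

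First I would write the mapping cone explicitly as $C_i = F_{i-1}\oplus G_i$ with differential
\[
d^C_i = \begin{pmatrix} -d^F_{i-1} & 0 \\ f_{i-1} & d^G_i \end{pmatrix} : F_{i-1}\oplus G_i \lra F_{i-2}\oplus G_{i-1}.
\]
Since $C_\bullet$ is a complex of finitely generated free graded $S$-modules (not necessarily minimal), formula (\ref{eq:minimization-M}) applies, giving $M_i = H_i(\ol{C}_\bullet)\otimes_{\kk} S$.

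Next I would reduce modulo the maximal homogeneous ideal. By minimality of $F_\bullet$ and $G_\bullet$, the reductions $\ol{d^F}_\bullet$ and $\ol{d^G}_\bullet$ vanish identically, so the reduced differential takes the simple form
\[
\ol{d^C}_i = \begin{pmatrix} 0 & 0 \\ \ol{f}_{i-1} & 0 \end{pmatrix} : \ol{F}_{i-1}\oplus \ol{G}_i \lra \ol{F}_{i-2}\oplus \ol{G}_{i-1}.
\]
A direct inspection then gives $\ker(\ol{d^C}_i) = \ker(\ol{f}_{i-1}) \oplus \ol{G}_i$ and $\Im(\ol{d^C}_{i+1}) = 0\oplus \Im(\ol{f}_i)$, so
\[
H_i(\ol{C}_\bullet) = \ker(\ol{f}_{i-1}) \oplus \coker(\ol{f}_i).
\]

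Finally, I would use the constant-entry hypothesis to identify these vector-space kernels and cokernels with the $S$-module kernels and cokernels appearing in the statement. Because every $f_i$ is represented by a matrix with entries in $\kk$, after choosing splittings it decomposes the free modules $F_i$ and $G_i$ compatibly, so $\ker(f_i) = \ker(\ol{f}_i)\otimes_{\kk} S$ and $\coker(f_i) = \coker(\ol{f}_i)\otimes_{\kk} S$ are themselves free $S$-modules. Tensoring the formula for $H_i(\ol{C}_\bullet)$ with $S$ and invoking (\ref{eq:minimization-M}) yields $M_i = \ker(f_{i-1})\oplus \coker(f_i)$, as desired.

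There is no real obstacle here; the only thing to be careful about is the sign and indexing convention in the mapping cone, and the observation that the constant-entry hypothesis is precisely what makes $\ker(f_i)$ and $\coker(f_i)$ interchangeable with their reductions (so that the statement of the lemma is well posed, since cokernels of maps between free modules are not free in general).
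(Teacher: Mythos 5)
Your proposal is correct and follows essentially the same route as the paper's proof: write the mapping cone explicitly, reduce modulo the maximal homogeneous ideal using minimality of $F_\bullet$ and $G_\bullet$ so that only $\ol{f}_{i-1}$ survives, compute the homology of the reduced complex, and then use the constant-entry hypothesis (base change of $\ol{f}_i$ along the flat extension $\kk \to S$) to identify $\ker(f_i)$ and $\coker(f_i)$ with $\ker(\ol{f}_i)\otimes_\kk S$ and $\coker(\ol{f}_i)\otimes_\kk S$. The only divergence is an immaterial sign convention in the cone differential.
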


\proof
We write $d_i^F$ (resp. $d_i^G$) for the differentials in $F_{\bullet}$ (resp. $G_{\bullet}$), and let $C(f)_{\bullet}$ denote the mapping cone of $f_{\bullet}$. We have that $C(f)_i=F_{i-1}\oplus G_i$, and the $i$-th differential in $C(f)_{\bullet}$ is given by the matrix
\[
\pd_i=\left(
\begin{array}{cc}
d_{i-1}^{F} &0\\
 f_{i-1} & - d_i^{G}
\end{array}
\right).
\]
Since $F_\bullet$ and $G_\bullet$ are minimal, the reduction of $C(f)_{\bullet}$ modulo the maximal homogeneous ideal of $S$ yields a complex $\ol{C(f)}_{\bullet}$, whose terms are $\ol{C(f)}_i=\ol{F}_{i-1}\oplus \ol{G}_i$ and with differentials
\[
\ol{\pd}_i=\left(
\begin{array}{cc}
0 & 0\\
\ol{f}_{i-1} & 0
\end{array}
\right).
\]
It follows that $H_i(\ol{C(f)}_\bullet)=\ker(\ol{f}_{i-1})\oplus\coker(\ol{f}_i)$ for all $i$.

Our assumption on the maps $f_i$ implies that $f_i = \ol{f}_i \oo_{\kk} S$ is obtained by base change from a map of graded vector spaces. Since this base change is flat, we conclude that
\[ \ker(f_i) = \ker(\ol{f}_i) \oo_{\kk} S\mbox{ and }\coker(f_i) = \coker(\ol{f}_i) \oo_{\kk} S\mbox{ for all }i.\]
The desired conclusion about the terms $M_i$ follows now from the description (\ref{eq:minimization-M}) of the minimization:
\[M_i = H_i(\ol{C(f)}_\bullet) \oo_{\kk} S = \ker(\ol{f}_{i-1})\oo_{\kk} S \oplus\coker(\ol{f}_i)\oo_{\kk} S =  \ker(f_{i-1})\oplus \coker(f_i). \qedhere\]
\endproof

\subsection{The geometric technique for constructing syzygies}
\label{subsec:Kempf-Weyman}

Let $Y$ be a projective variety over a field~$\kk$, and let $V$ be a finite dimensional $\kk$-vector space. Suppose  we have a short exact sequence
\begin{equation}\label{eq:ses-xi-V-eta}
 0 \lra \xi \lra V \oo \OO_{Y} \lra \eta \lra 0,
\end{equation}
of vector bundles over $Y$ and set $r:=\mbox{rk}(\xi)$. We view $S := \Sym(V)$ as the coordinate ring of the affine space $V^{\vee}$. Let $X := \ul{\op{Spec}}_{Y}(\OO_{Y} \oo S) = V^{\vee} \times Y$
denote the trivial bundle on $Y$ with fiber $V^{\vee}$ and write $\pi:X\lra V^{\vee}$ and $p:X\lra Y$ for the projections. We consider the sheaf $\mc{S}: = \Sym_{\OO_{Y}}(\eta)$, and let
\[Z := \ul{\op{Spec}}_{Y}(\mc{S}),\]
be the total space of the  vector bundle associated with $\eta$. The surjection in (\ref{eq:ses-xi-V-eta}) makes  $Z$  a subbundle of the trivial bundle $X$ and we get the following  commutative diagram:
\begin{equation}
\label{eqn:KLW}
\vcenter{\vbox{%
\xymatrix{%
Z \ar@{^{(}->}[r] \ar[d]^{\pi_{|_Z}} & X \ar[d]^{\pi} \ar[r]^{p} & Y\\
\pi(Z) \ar@{^{(}->}[r] & V^{\vee} }}}
\end{equation}
The sheaf $\OO_Z$ is resolved over $X$ by the following Koszul complex
\[
\mathcal{K}(\xi)_\bullet:\qquad 0\lra \bigwedge^r p^*(\xi) \lra \cdots \lra p^*(\xi) \lra \mathcal{O}_{X} \Bigl[\lra \mathcal{O}_Z \lra 0\Bigl].
\]
If $\mc{V}$ denotes a  locally free sheaf on $Y$,  we can tensor $\mathcal{K}(\xi)_\bullet$ with $p^*(\mc{V})$ to obtain a locally free $\mc{O}_{X}$-resolution of $\mc{M}(\mc{V}) := p^*(\mc{V})\otimes \OO_Z$, given by
\[
\mathcal{K}(\xi,\mathcal{V})_\bullet:\qquad 0\lra p^*(\mc{V}) \oo \bigwedge^r p^*(\xi) \lra \cdots \lra p^*(\mc{V}) \oo p^*(\xi) \lra p^*(\mc{V}) \Bigl[\lra \mc{M}(\mc{V}) \lra 0\Bigl].
\]
The derived direct image of $\mc{M}(\mc{V})$ along $\pi$, or equivalently, that of the complex $\mathcal{K}(\xi,\mathcal{V})_\bullet$, is represented by a minimal complex of graded $S$-modules constructed as follows (see \cite[Theorem~5.1.2]{weyman}).

\begin{thm}\label{thm:Basic-Theorem}
 Let $\mc{V}$ be a locally free sheaf on $Y$ and set $\mc{M}(\mc{V}):= p^*(\mc{V})\otimes \OO_Z$. There exists a minimal complex $F(\mathcal{V})_\bullet$ of graded $S$-modules, whose terms are defined by
\[
F(\mathcal{V})_i=\bigoplus_{j\ge 0}H^j\bigl(Y, \bigwedge^{i+j}\xi\otimes \mathcal{V}\bigr)\otimes_\kk S(-i-j)\  \mbox{ for all }i\in\bb{Z}.
\]
This complex is exact in positive homological degrees, and its homology groups in negative degrees are given by
\[
H_{-i}\bigl(F(\mathcal{V})_\bullet\bigr)=H^i\bigl(Z,\mc{M}(\mc{V})\bigr) = \bigoplus_{d\geq 0} H^i\bigl(Y,\mc{V} \oo \Sym^d\eta\bigr)\quad\mbox{ for all }i\geq 0.
\]
\end{thm}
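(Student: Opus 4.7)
The plan is to compute $R\pi_*\mc{M}(\mc{V})$ in two compatible ways and match the answers via minimization.

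\textbf{Direct pushforward.} Since $V^{\vee}=\Spec S$ is affine, $R\pi_*\mc{M}(\mc{V})$ coincides with $R\Gamma(X,\mc{M}(\mc{V}))$ as a complex of graded $S$-modules. Combining the closed embedding $Z\hookrightarrow X$ with the fact that $p|_Z:Z\to Y$ is affine with $(p|_Z)_*\OO_Z=\Sym\eta$, the projection formula yields
\[R\Gamma(X,\mc{M}(\mc{V}))=R\Gamma(Z,p|_Z^*\mc{V})=R\Gamma(Y,\mc{V}\otimes\Sym\eta),\]
so that
\[R^i\pi_*\mc{M}(\mc{V})=\bigoplus_{d\geq 0}H^i(Y,\mc{V}\otimes\Sym^d\eta),\]
with the graded $S$-module structure induced by the surjection $V\otimes\OO_Y\twoheadrightarrow\eta$ of (\ref{eq:ses-xi-V-eta}).

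\textbf{Pushforward via the Koszul resolution.} Because $X=Y\times_\kk\Spec S$, flat base change gives $R^j\pi_*(p^*\mc{F})=H^j(Y,\mc{F})\otimes_\kk S$ for any coherent $\mc{F}$ on $Y$. Applying this termwise to $\mc{K}(\xi,\mc{V})_\bullet$ after choosing a finite affine Čech cover of $Y$ produces a bounded bi-complex of graded free $S$-modules whose total complex $G_\bullet$ is quasi-isomorphic to $R\pi_*\mc{M}(\mc{V})$. Placing $\bw^k p^*(\xi)\otimes p^*(\mc{V})$ in internal degree $k$ makes the Koszul differential homogeneous, so the contribution in bi-position $(k,j)$ is a direct sum of copies of $S(-k)$ organized into the Čech complex computing $H^j(Y,\mc{V}\otimes\bw^k\xi)$.

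\textbf{Minimization and conclusion.} By graded homological algebra, $G_\bullet$ splits as $F(\mc{V})_\bullet\oplus E_\bullet$ with $F(\mc{V})_\bullet$ minimal and $E_\bullet$ acyclic, and the terms of a minimal complex are detected by reduction mod $\mathfrak{m}=S_{>0}$. The crucial point is that the Koszul differentials strictly raise the internal degree by $1$, while the Čech differentials preserve it. Hence $G_\bullet\otimes_S\kk$ retains only the Čech differentials and decouples into the independent Čech complexes computing $H^\bullet(Y,\mc{V}\otimes\bw^k\xi)$, with the $k$-th block placed in internal degree $k$. Collecting contributions to homological degree $i$ (i.e.\ $k-j=i$, so $k=i+j$) and applying minimality yields
\[F(\mc{V})_i=\bigoplus_{j\geq 0}H^j(Y,\mc{V}\otimes\bw^{i+j}\xi)\otimes_\kk S(-i-j).\]
Together with the direct pushforward computation, this gives $H_{-i}(F(\mc{V})_\bullet)=\bigoplus_{d\geq 0} H^i(Y,\mc{V}\otimes\Sym^d\eta)$ for $i\geq 0$ and $H_i(F(\mc{V})_\bullet)=0$ for $i>0$.

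\textbf{Main obstacle.} The delicate step is the simultaneous bookkeeping of the homological and internal polynomial gradings: one must verify that the minimization introduces no unexpected shifts and that no extraneous $S(-d)$ summands survive. This hinges on the observation that Koszul differentials are the only source of strictly positive internal degree shift in the bi-complex and therefore vanish upon reduction mod $\mathfrak{m}$. Otherwise, higher $d_r$-differentials in the associated spectral sequence could in principle produce cancellations that alter the multiplicities in the minimal complex; here they cannot, which is what pins down the terms $F(\mc{V})_i$ exactly as stated.
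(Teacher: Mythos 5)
The paper does not prove Theorem~\ref{thm:Basic-Theorem} itself; it cites \cite[Theorem~5.1.2]{weyman} for it. Your argument reconstructs the standard proof from that reference: push forward the Koszul resolution $\mc{K}(\xi,\mc{V})_\bullet$ via a \v{C}ech--Koszul double complex, compute the homology of the result by the direct identification $R\Gamma(Z,p|_Z^*\mc{V})=R\Gamma(Y,\mc{V}\oo\Sym\eta)$, and minimize, using that the Koszul differential is the unique source of strictly positive internal degree shift. This is the right strategy and the one the reference uses.

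The step that deserves more care is the invocation of ``graded homological algebra'' to split $G_\bullet$ into a minimal complex plus a split-exact one. Your total complex $G_\bullet$ is built from \v{C}ech cochains $\check{C}^j(\mc{U},\bw^k\xi\oo\mc{V})$, which are infinite-dimensional over $\kk$; hence the terms $G_i$ are \emph{infinitely} generated graded free $S$-modules, and the usual finite-rank minimization lemma does not apply verbatim. A clean way to close this gap is to first retract along the \v{C}ech direction: since $\kk$ is a field, each column $\check{C}^\bullet(\mc{U},\bw^k\xi\oo\mc{V})$ admits a $\kk$-linear contraction onto its cohomology $H^\bullet(Y,\bw^k\xi\oo\mc{V})$, and the homological perturbation lemma then gives a deformation retract of $G_\bullet$ onto a complex $G'_\bullet$ with $G'_i=\bigoplus_{j\geq 0}H^j\bigl(Y,\bw^{i+j}\xi\oo\mc{V}\bigr)\oo_\kk S(-i-j)$, which is now of finite rank. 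The perturbed differential on $G'_\bullet$ is built from Koszul maps interleaved with the chosen homotopies, so every one of its components strictly raises internal degree; therefore $G'_\bullet$ is already minimal and equals $F(\mc{V})_\bullet$. With this adjustment, your argument is complete and matches the proof underlying the cited reference.
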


If $\mc{M}(\mc{V})$ has vanishing higher cohomology, then the complex $F(\mathcal{V})_\bullet$ gives the minimal free resolution of $H^0\bigl(Z,\mc{M}(\mc{V})\bigr)$, as will be the case in the proof of Propositions~\ref{prop:resolution-Rbar}. We also apply Theorem~\ref{thm:Basic-Theorem} in the proof of Proposition~\ref{prop:J-complex}, where we construct a complex with two non-vanishing homology groups. 



\section{Syzygies of the tangent developable to a rational normal curve}
\label{sec:repth}

The goal of this section is to establish a link, following \cite{E91}, between the syzygies of the tangent developable $\mc{T}$ to a rational normal curve of degree $g$, and Koszul modules of a particular kind. This link, together with the general results on finite length Koszul modules that we established in Section~\ref{sec:fin-length-koszul}, allows us to completely characterize the (non-)vanishing behavior for the syzygies of $\mc{T}$.

\vskip 4pt

Throughout this section we fix a field $\kk$, a $2$-dimensional $\kk$-vector space $U$ and set $\PP^1:=\mathbb P\bigl(U^{\vee}\bigr)$. We fix $g\geq 3$, let $\PP^g:=\mathbb P\bigl(\D^g(U^{\vee})\bigr)$, and consider as in Section~\ref{subsec:Veronese} the degree $g$ \defi{Veronese embedding} $\nu:\PP^1 \lra \PP^g$, whose image is the \defi{rational normal curve} $\Gamma$ of degree $g$. We let $\mc{T}\subseteq \PP^g$ denote the \defi{tangential variety (or tangent developable)} of the curve $\Gamma$, and set
\begin{itemize}
 \item $S:=\Sym(\Sym^g U)\cong \Sym H^0(\mc{T},\OO_{\mc{T}}(1))$, the homogeneous coordinate ring of $\PP^g$.
 \item $I:=I_{\mc{T}}\subseteq S$, the homogeneous ideal defining $\mc{T}$.
 \item $R:= S/I$, the homogeneous coordinate ring of $\mc{T}$.
\end{itemize}
The goal of this section is to describe the minimal free resolution of $R$ as an $S$-module, and in particular to establish Theorem~\ref{thm:vanishing-tangential}. Our first result describes the shape of this minimal resolution.

\begin{thm}\label{thm:shape-res-R}
If $\chr(\kk)\neq 2$ then $R$ is Gorenstein with Castelnuovo--Mumford regularity $\reg(R)=3$.
\end{thm}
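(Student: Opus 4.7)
The plan is to compute the minimal graded free resolution of $R$ explicitly (up to its shape) via the resolution of singularities $\tau:\PP(\mc{J})\to\mc{T}$, and then read off both conclusions from the resulting Betti diagram~(\ref{eq:betti-T}). The key auxiliary module is the ``partial normalization''
\[\ol{R}:=\bigoplus_{d\geq 0}H^0\bigl(\PP(\mc{J}),\OO_{\PP(\mc{J})}(d)\bigr)=\bigoplus_{d\geq 0}H^0\bigl(\PP^1,\Sym^d\mc{J}\bigr).\]
Using the introductory short exact sequence $0\to\OO_{\mc{T}}\to\tau_*\OO_{\PP(\mc{J})}\to\omega_{\Gamma}\to 0$, together with $R^i\tau_*\OO_{\PP(\mc{J})}=0$ for $i>0$ and the vanishing $H^1\bigl(\mc{T},\OO_{\mc{T}}(d)\bigr)=0$ for $d\geq 0$ (both verified by pushing cohomology through the $\PP^1$-bundle $\PP(\mc{J})$ and using that of $\Gamma$), taking global sections of twists yields a short exact sequence of graded $S$-modules
\[0\lra R\lra\ol{R}\lra\Omega\lra 0,\qquad\text{where } \Omega:=\bigoplus_{d\geq 0}H^0\bigl(\Gamma,\omega_{\Gamma}(d)\bigr).\]

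Next, I would compute the resolution of $\ol{R}$ using the Kempf--Weyman technique (Theorem~\ref{thm:Basic-Theorem}) applied to $Y=\PP^1$, $V=\Sym^g U$, $\eta=\mc{J}$, and kernel bundle $\xi=\ker(V\oo\OO_{\PP^1}\onto\mc{J})$. Since $\Sym^d\mc{J}$ has vanishing higher cohomology on $\PP^1$ for all $d\geq 0$, the Kempf--Weyman complex is a genuine minimal free resolution of $\ol{R}$, and its Betti numbers are $\dim K_{i,j}(\ol{R})=\dim H^j(\PP^1,\bw^{i+j}\xi)$. An analysis of the filtration on $\bw^k\xi$ induced by the defining sequence $0\to\xi\to V\oo\OO_{\PP^1}\to\mc{J}\to 0$ (combined with $0\to\omega_{\PP^1}(g)\to\mc{J}\to\OO_{\PP^1}(g)\to 0$) shows that only rows $j=0$ and $j=1$ contribute, yielding $K_{0,0}(\ol{R})=\kk$ and the linear strand $K_{i+1,1}(\ol{R})\cong\D^{2i+2}U\oo\bw^{i+2}\Sym^{g-2}U$. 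Analogously, since $\Gamma$ is the rational normal curve with its classical Eagon--Northcott resolution, $\Omega$ is a shift of the canonical module of $S/I_\Gamma$, and graded local duality yields an explicit minimal resolution of $\Omega$ of length $g-1$ whose top free module has rank one.

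I would then lift the surjection $\ol{R}\onto\Omega$ to a chain map between the two minimal resolutions. The degree shifts match up so that this lift has matrix entries in $\kk$; Lemma~\ref{lem:minimization} then computes the minimization of the mapping cone, producing the minimal free resolution $F_\bullet\to R$ (after the obvious homological shift). The outcome has Betti diagram exactly as in~(\ref{eq:betti-T}): four rows indexed $j=0,1,2,3$, with $b_{0,0}=b_{g-2,3}=1$ and $b_{i,1}, b_{i,2}$ potentially nonzero for $1\leq i\leq g-3$. From this shape, $\reg(R)=3$ is immediate since the last nonzero row is $j=3$; and the top free module $F_{g-2}=S(-g-1)$ being of rank one forces $\op{Ext}^{g-2}_S(R,S(-g-1))$ to be cyclic, so (combined with $R$ being Cohen--Macaulay of codimension $g-2$, which is automatic once the length of the resolution equals the codimension) $R$ is Gorenstein.

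The main obstacle is the cohomology computation of $\bw^k\xi$ on $\PP^1$ in arbitrary characteristic (which in turn requires the characteristic-free tools of Sections~\ref{sec:Hermite-rec} and~\ref{sec:fin-length-koszul}) together with the verification that the mapping-cone cancellations produce exactly the symmetric diagram~(\ref{eq:betti-T}). This is also where the hypothesis $\chr(\kk)\neq 2$ enters, via the surjectivity of the Gaussian--Wahl map~(\ref{eq:Wahl-map}) (equivalently the injectivity of $\Delta_1$ from Lemma~\ref{lem:mu1-delta1}) used to control the filtration pieces on $\bw^k\xi$.
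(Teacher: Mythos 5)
Your overall strategy—resolve $\tilde R = \bigoplus_d H^0(\PP(\mc{J}),\OO(d))$ via Kempf--Weyman, resolve the canonical module of $\Gamma$, lift the surjection to a chain map with constant entries, and minimize the mapping cone—is essentially the paper's approach (Sections 5.4--5.8). However, there is a genuine circularity at the very first step, and one verification is skipped that turns out to be the crux.

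You assert a short exact sequence of graded $S$-modules
\[0\lra R\lra \ol{R}\lra\Omega\lra 0,\]
obtained from the sheaf sequence $0\to\OO_{\mc{T}}\to\tau_*\OO_{\PP(\mc{J})}\to\omega_\Gamma\to 0$ by taking sections of all twists. To extract this, you invoke both $H^1\bigl(\mc{T},\OO_{\mc{T}}(d)\bigr)=0$ for $d\geq 0$ and the identification $R\cong\bigoplus_d H^0\bigl(\mc{T},\OO_{\mc{T}}(d)\bigr)$ (projective normality). Neither of these can be ``verified by pushing cohomology through the $\PP^1$-bundle'': pushing forward along $\tau$ only shows $H^1(\tau_*\OO_{\PP(\mc{J})}(d))=0$, and the long exact sequence then reduces $H^1(\mc{T},\OO_{\mc{T}}(d))=0$ precisely to the surjectivity of $\ol{R}_d\onto\Omega_d$, which is the very content of the map you are trying to build. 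In the paper both facts are derived \emph{from} Theorem~\ref{thm:shape-res-R} (see Corollary~\ref{cor:omega-T} and the remark after it), so assuming them here is circular. The paper sidesteps this by introducing $R' := \ker\bigl(\tilde R\to C\bigr)$, which visibly contains $R$, building the mapping-cone resolution of $R'$, and only afterward proving $R=R'$ via the explicit computation $\coker(\tl q_1)\oo\kk\cong W^{(2)}_{g-3}=0$ (Lemma~\ref{lem:coker-tl-q} and Corollary~\ref{cor:R=R'}), which is where $\chr(\kk)\neq 2$ enters through the isomorphism $\Delta_1:\D^2U\xrightarrow{\sim}\bw^2(\D^2U)$. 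Your plan omits this step entirely and instead gets it ``for free'' from the unjustified exact sequence.

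Two smaller points: the cohomology of $\bw^k\xi$ in (\ref{eq:coh-wedge-xi}) is characteristic-independent, so $\chr(\kk)\neq 2$ does not enter in ``controlling the filtration pieces on $\bw^k\xi$'' as you suggest; it enters twice, once in $W^{(2)}=0$ above and once in the injectivity of $\tl q_{g-2}$ (equivalently of $\Delta_1:\D^{2g-4}U\to\bw^2\D^{g-1}U$), which is what makes the tail of the resolution a rank-one free module in degree $g+1$. Your suggestion to resolve $\Omega$ by dualizing the Eagon--Northcott complex for $\Gamma$ is legitimate, but to exhibit the lifted chain map concretely (so that Lemma~\ref{lem:minimization} actually applies and the kernels/cokernels are computable) the paper instead realizes $C_\bullet$ as the minimization of a map $p_\bullet:J_\bullet\to K_\bullet$ whose terms are matched to those of $F_\bullet$ through Hermite reciprocity; a purely abstract lift would not give you the explicit control you need to finish the argument.
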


Since $R$ is the homogeneous coordinate ring of the surface $\mc{T}$, it follows that $\dim(R)=3$. Theorem~\ref{thm:shape-res-R} implies then that the Betti table of the $S$-module $R$ is described by (\ref{eq:betti-T}), where
\[b_{i,j} := \dim \  K_{i,j}  \bigl( \mc{T}, \OO_{\mc{T}}(1)\bigr).\]
Moreover, the Gorenstein property implies that $b_{i,1} = b_{g-2-i,2}$, for $i=1,\ldots,g-3$.
Based on this equality, we characterize the (non-)vanishing behavior of the Betti numbers of $\mathcal{T}$.

\begin{thm}\label{thm:nonvanishing-kij}
 Let $p=\chr(\kk)\neq 2$. If $p=0$ or $p\geq\frac{g+2}{2}$, then
 \begin{equation}\label{eq:ki2-not-0-plarge}
  b_{i,2} \neq 0\quad \Longleftrightarrow\quad \frac{g-2}{2} \leq i \leq g-3.
 \end{equation}
 If $3\leq p\leq\frac{g+1}{2}$,  then
 \begin{equation}\label{eq:ki2-not-0-psmall}
 b_{i,2} \neq 0\quad \Longleftrightarrow\quad p-2 \leq i \leq g-3.
 \end{equation}
\end{thm}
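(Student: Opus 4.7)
The plan is to translate the (non-)vanishing of $b_{i,2}$ into a question about graded pieces of Weyman modules via Theorem~\ref{thm:intro-ki2=Wi+2}, which identifies $b_{i,2}=\dim W^{(i+2)}_{g-3-i}$ with $n:=\dim V^{(i+2)}=i+3$ and $\dim K^{(i+2)}=2n-3$, and then apply the general results of Section~\ref{sec:fin-length-koszul}. The first key step is to compute the resonance variety: since $K^{(d)\perp}$ is the kernel of the surjective Wahl map $\mu_1:\bw^2\Sym^d(U^{\vee})\to\Sym^{2d-2}(U^{\vee})$, $\mu_1(f\wedge g)=fg'-f'g$, a non-zero $a\in\Sym^d(U^{\vee})$ lies in $\mc{R}(V^{(d)},K^{(d)})$ iff its Wronskian with some $b\notin\kk a$ vanishes. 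Analyzing when $(b/a)'=0$ forces $a/\gcd(a,b)$ and $b/\gcd(a,b)$ to lie in $\kk[y^p]$, which requires $d\geq p$. Hence
\[\mc{R}(V^{(d)},K^{(d)})=\{0\} \iff \chr(\kk)=0 \text{ or } d<\chr(\kk).\]

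For the vanishing statements I apply Theorem~\ref{thm:vanishing-koszul} to $W^{(i+2)}$: whenever $\chr(\kk)\geq i+1$ and the resonance is trivial, $W^{(i+2)}_q=0$ for $q\geq i$, and the condition $q=g-3-i\geq i$ translates to $i\leq(g-3)/2$. In Case~1 ($\chr(\kk)=0$ or $\chr(\kk)\geq(g+2)/2$) the hypothesis $i<(g-2)/2$ forces $\chr(\kk)>i+2$, so both the characteristic and resonance conditions hold; in Case~2 ($3\leq p\leq(g+1)/2$) the hypothesis $i\leq p-3$ gives $p>i+2$ together with $i\leq(g-5)/2<(g-3)/2$. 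For the non-vanishing, Theorem~\ref{thm:boundHilb}'s equality case applies (as $\dim K^{(d)}=2n-3$) and yields
\[b_{i,2}=\binom{g-1}{g-3-i}\cdot\frac{(i+1)(2i-g+3)}{g-1-i}>0\]
precisely when $(g-2)/2\leq i\leq g-3$. This settles Case~1 in characteristic zero, and by upper semicontinuity of Betti numbers (the graded pieces of $W^{(d)}$ are cokernels of integrally defined maps of free modules, whose dimensions can only grow upon reduction mod $p$), the non-vanishing $b_{i,2}>0$ for $(g-2)/2\leq i\leq g-3$ persists in every characteristic, covering all of Case~1 and the upper portion of Case~2.

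The main obstacle is the extended non-vanishing required in Case~2: for $3\leq p\leq(g+1)/2$ one must show $b_{i,2}\neq 0$ in the entire range $p-2\leq i<(g-2)/2$, where the Weyman module $W^{(i+2)}$ has non-trivial resonance and neither the dimension formula nor the char-$0$ comparison is available. My approach here is to invoke the Gorenstein duality $b_{i,2}=b_{g-2-i,1}$ from Theorem~\ref{thm:shape-res-R}, which reduces the question to non-vanishing of the linear syzygies $b_{j,1}$ for $j=g-2-i$ in the range $\lceil g/2\rceil\leq j\leq g-p$. These additional linear syzygies come from the geometric fact that, in characteristic $\leq(g+1)/2$, the tangential variety $\mc{T}$ sits inside a rational normal scroll of high codimension (Remark~\ref{rem:T-on-scroll}), so that the Eagon--Northcott linear strand of the scroll contributes non-zero classes to $K_{j,1}(\mc{T},\OO_{\mc{T}}(1))$. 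The delicate point is verifying that these classes do not get absorbed by additional quadric generators in the ideal of $\mc{T}$; this is carried out via the explicit Kempf--Weyman construction of the minimal free resolution of $R$ developed in the subsequent subsections of Section~\ref{sec:repth}.
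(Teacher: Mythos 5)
Your setup is sound: you correctly reduce $b_{i,2}$ to $\dim W^{(i+2)}_{g-3-i}$ via Theorem~\ref{thm:intro-ki2=Wi+2}, correctly compute that $\mc{R}(V^{(d)},K^{(d)})=\{0\}$ iff $\chr(\kk)=0$ or $d<\chr(\kk)$ (this is exactly Lemma~\ref{lem:fdiml-Weyman} in the paper, proved by the same Wronskian argument), and your vanishing deduction from Theorem~\ref{thm:vanishing-koszul} is correct. Your treatment of non-vanishing in Case~1 via the equality case of Theorem~\ref{thm:boundHilb} in characteristic zero plus semicontinuity is valid but more roundabout than the paper's.

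The genuine gap is your ``main obstacle'' paragraph. You observe that for $3\leq p\leq\tfrac{g+1}{2}$ and $p-2\leq i<\tfrac{g-2}{2}$ the Weyman module $W^{(i+2)}$ has non-trivial resonance, and you conclude that ``neither the dimension formula nor the char-$0$ comparison is available.'' But that is precisely the case where the non-vanishing is \emph{easiest}: non-trivial resonance means $\mc{R}^{(i+2)}$ is a positive-dimensional cone, hence by (\ref{eq:mainequiv}) the Koszul module $W^{(i+2)}$ has infinite length; since it is generated in degree zero, $W^{(i+2)}_q\neq 0$ for \emph{every} $q\geq 0$, and in particular for $q=g-3-i$. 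This one-line observation finishes the extended range of Case~2 (and simultaneously streamlines the overlapping range in Case~1 where $i+2\geq p$ and your dimension formula cannot be applied). The Eagon--Northcott route you propose instead---through Gorenstein duality, the scroll containment of Remark~\ref{rem:T-on-scroll}, and the resolution of $R$---does lead to a proof (the map $\Tor_j^S(S/I_{\mathrm{scroll}},\kk)_{j+1}\to\Tor_j^S(R,\kk)_{j+1}$ is injective because $I_{\mc{T}}/I_{\mathrm{scroll}}$ is generated in degree $\geq 2$), but you flag the key injectivity as a ``delicate point'' and do not carry it out, so as written your argument is incomplete. In the paper Remark~\ref{rem:T-on-scroll} is a corollary \emph{of} Theorem~\ref{thm:nonvanishing-kij}, not an ingredient in its proof, and it is circular to invoke it here. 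Replace the entire final paragraph with the infinite-length observation above and the proof is complete and matches the paper's.
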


The vanishing Theorem~\ref{thm:vanishing-tangential} in the Introduction is now a direct consequence of Theorem~\ref{thm:nonvanishing-kij}. Indeed, it can be rephrased as $b_{\lfloor \frac{g}{2}\rfloor,1} = 0$ if $p=\chr(\kk)$ satisfies $p=0$ or $p\geq \frac{g+2}{2}$. Using the Gorenstein property of $R$ and that $g-2-\lfloor \frac{g}{2}\rfloor = \lfloor \frac{g-3}{2} \rfloor$, we have $b_{\lfloor \frac{g}{2}\rfloor,1} = b_{\lfloor \frac{g-3}{2} \rfloor,2}$. The vanishing of $b_{\lfloor \frac{g-3}{2} \rfloor,2}$ follows from (\ref{eq:ki2-not-0-plarge}).

\vskip 4pt

The key to proving Theorem~\ref{thm:nonvanishing-kij} is to interpret the Koszul cohomology groups $K_{i,2}(\mathcal{T}, \OO_{\mathcal{T}}(1))$ in the framework of Koszul modules. To that end, we consider for $a\geq 1$ the $\sl_2$-equivariant map (\ref{eq:Del1-to-wedge2}), which is an inclusion if $\chr(\kk)\neq 2$ (see Lemma~\ref{lem:mu1-delta1}), in which case we denote by $W^{(a)}$ the associated Koszul module $W(\D^{a} U,\D^{2a-2}U)$ (see (\ref{eq:def-W-mod})). This module was considered in \cite[Section~3.I.B]{E91} and was called a \defi{Weyman module}. The following is the crucial result of this section.

\begin{thm}\label{thm:Ki2=Wi+2}
If $\chr(\kk)\neq 2$, then for each $i=1,\ldots,g-3$, we have an identification
 \[K_{i,2}(\mc{T},\OO_{\mc{T}}(1)) = W^{(i+2)}_{g-3-i}.\]
\end{thm}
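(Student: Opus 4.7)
The plan is to realize $K_{i,2}(\mathcal{T},\mathcal{O}_{\mathcal{T}}(1))$ as $\ker(\tilde{\delta}_1)/\Im(\tilde{\delta}_2)$ in the right-hand column of diagram (\ref{eq:key-diagram}), since the horizontal Hermite reciprocity identifications and the commutativity of the diagram immediately match this with $W^{(i+2)}_{g-3-i}$. To carry this out, I would introduce the desingularization $\tau: \mathbb{P}(\mathcal{J}) \to \mathcal{T}$ coming from the jet bundle of $\mathcal{O}_{\mathbb{P}^1}(g)$ on $\Gamma$, and use the short exact sequence $0 \to \mathcal{O}_{\mathcal{T}} \to \tau_*\mathcal{O}_{\mathbb{P}(\mathcal{J})} \to \omega_{\Gamma} \to 0$ on $\mathbb{P}^g$. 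The associated long exact sequence in Koszul cohomology contains the fragment
\[K_{i+1,1}\bigl(\mathcal{T},\tau_*\mathcal{O}_{\mathbb{P}(\mathcal{J})},\mathcal{O}_{\mathcal{T}}(1)\bigr) \overset{\zeta}{\to} K_{i+1,1}\bigl(\Gamma,\omega_{\Gamma},\mathcal{O}_{\Gamma}(1)\bigr) \to K_{i,2}\bigl(\mathcal{T},\mathcal{O}_{\mathcal{T}}(1)\bigr) \to K_{i,2}\bigl(\mathcal{T},\tau_*\mathcal{O}_{\mathbb{P}(\mathcal{J})},\mathcal{O}_{\mathcal{T}}(1)\bigr),\]
and I would reduce the theorem to computing the outer groups and the map $\zeta$.

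For the first outer group, I would apply the Kempf--Weyman technique of Section~\ref{subsec:Kempf-Weyman} to the kernel bundle $\xi := \ker(\Sym^g U \oo \mathcal{O}_{\mathbb{P}^1} \to \mathcal{J})$ on $\Gamma$, whose cohomology controls the syzygies of the scroll containing $\mathcal{T}$. Using that $\mathbb{P}(\mathcal{J})$ is a $\mathbb{P}^1$-bundle over $\Gamma$ and a direct calculation of $H^1(\Gamma, \bw^{i+2}\xi)$ via Bott-type cohomology on $\mathbb{P}^1$, one identifies $K_{i+1,1}(\mathcal{T},\tau_*\mathcal{O}_{\mathbb{P}(\mathcal{J})},\mathcal{O}_{\mathcal{T}}(1)) \cong \D^{2i+2}U \oo \bw^{i+2}\Sym^{g-2}U$, while the analogous degree-two group vanishes. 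This step produces the top right corner of diagram (\ref{eq:key-diagram}) and kills the right neighbor of $K_{i,2}$. For the second outer group, I would apply the kernel-bundle formula (\ref{koszint}) to $\Gamma \hookrightarrow \mathbb{P}^g$: this identifies $K_{i+1,1}(\Gamma,\omega_{\Gamma},\mathcal{O}_{\Gamma}(1))$ with the kernel of a natural map $\alpha: \D^{i+2}U \oo \bw^{i+2}\Sym^{g-1}U \to \bw^{i+2}\Sym^g U$, and a direct check, comparing $\alpha$ to the unique $\mathfrak{sl}_2$-equivariant map between these highest-weight modules, shows $\alpha = \tilde{\delta}_1$.

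The main obstacle is the last step: identifying the connecting homomorphism $\zeta$ with $\tilde{\delta}_2$. Both maps have the same source and target once the preceding identifications are installed, but matching them on the nose requires an exhausting bookkeeping of the Koszul differentials, the Hermite reciprocity isomorphisms from Section~\ref{subsec:Hermite}, and the sign conventions in the Kempf--Weyman resolutions. I would try to pin down $\zeta$ as multiplication by the extension class in $\Ext^1(\omega_\Gamma, \mathcal{O}_{\mathcal{T}})$ and then translate this class across the Hermite isomorphism $\psi_d^i$ using the compatibility diagram (\ref{eq:comm-diag-co-multiplication}) together with the formula (\ref{eq:dkos-slam}) for the Koszul differential in the Schur basis. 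If this can be pushed through, the theorem follows. As the authors indicate, however, the more robust route is the algebraic one: replace the geometric identification by a direct Kempf--Weyman computation of the minimal resolution of $R$ and of the auxiliary ring $\bar{R} := \Gamma_*(\tau_*\mathcal{O}_{\mathbb{P}(\mathcal{J})})$, then take the minimization of a mapping cone via Lemma~\ref{lem:minimization}. This bypasses the need to realize $\tilde{\delta}_1$ and $\tilde{\delta}_2$ as maps induced in cohomology, at the price of heavier syzygy calculations.
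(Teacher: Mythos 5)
Your main proposal retraces the geometric outline from the Introduction, which the paper explicitly declines to execute because, as the authors write, it ``requires at each step a very careful bookkeeping of the identifications involved.'' Two steps in your sketch would not go through as stated. In step (2) you claim $\alpha=\tilde{\delta}_1$ because there is a ``unique $\mathfrak{sl}_2$-equivariant map between these highest-weight modules''; but $\D^{i+2}U\otimes\bw^{i+2}\Sym^{g-1}U$ and $\bw^{i+2}\Sym^g U$ are highly reducible $\mathfrak{sl}_2$-representations (tensor and exterior products of symmetric powers), so no uniqueness argument is available and the agreement would have to be verified directly. In step (3), as you acknowledge, the identification of the connecting map $\zeta$ with $\tilde{\delta}_2$ is left open; the paper itself flags this as the real difficulty and never carries it out.

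What the paper actually does is the algebraic route you gesture at in your last sentences, and it is substantially more than ``replace the geometric identification by a Kempf--Weyman computation.'' Concretely: it computes the minimal free resolution $F_\bullet$ of $\tilde{R}=H^0(Z,\mathcal{O}_Z)$ via Kempf--Weyman (Proposition~\ref{prop:resolution-Rbar}), builds a linear complex $J_\bullet$ with $H_0(J_\bullet)=\kk$ and $H_1(J_\bullet)=C$, the canonical module of the cone over $\Gamma$ (Proposition~\ref{prop:J-complex}), constructs explicit chain maps $p_\bullet:J_\bullet\to K_\bullet$ and $q_\bullet:F_\bullet\to J_{\bullet+1}$ with formulas verified term by term on the Schur and divided-power bases (Propositions~\ref{prop:map-J-to-K} and~\ref{prop:map-F-to-J}), proves $p_{i+1}\circ q_i=0$ via the Hermite-reciprocity compatibility diagram (\ref{eq:big-sym-to-wedge}) so that $q_\bullet$ factors through $\tilde{q}_\bullet:F_\bullet\to C_\bullet$ (Corollary~\ref{cor:F-to-C}), identifies $\coker(\tilde{q}_i)\otimes_S\kk=W^{(i+1)}_{g-2-i}$ (Lemma~\ref{lem:coker-tl-q}), and finally minimizes the mapping cone of $\tilde{q}_\bullet$ via Lemma~\ref{lem:minimization} and shows $R'=R$ (Corollary~\ref{cor:R=R'}) to read off $K_{i,2}(\mathcal{T},\OO_\mathcal{T}(1))=\Tor_i^S(R,\kk)_{i+2}=\coker(\tilde{q}_{i+1})\otimes_S\kk=W^{(i+2)}_{g-3-i}$. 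The chain-map verifications and the Hermite-reciprocity lemmas of Section~\ref{subsec:Hermite} are exactly the exhausting bookkeeping you hoped to sidestep in the geometric route; the algebraic approach does not avoid it, it relocates it to a setting where it can actually be carried out.
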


Our analysis of the syzygies of $\mc{T}$ is for the most part characteristic-free, and we will be careful to point out when the assumption that $\chr(\kk)\neq 2$ is used (see Section~\ref{subsec:char2} for the case $\chr(\kk)=2$). In particular, our analysis yields the following characteristic-free description of the groups $K_{i,1}(\mathcal{T}, \OO_{\mathcal{T}}(1))$.

\begin{thm}
\label{cor=kw}
If $\chr(\kk)$ is arbitrary, then for $i=0, \ldots, g-2$, we have an identification
\begin{equation}\label{eq:ker-delta-2}
K_{i,1}(\mathcal{T}, \OO_{\mathcal{T}}(1)) = \mathrm{Ker} \Bigl\{ \delta_2 :  \D^{2i}U \otimes \Sym^{g-2-i}(\D^{i+1}U) \lra
\D^{i+1}U \otimes \mathrm{Sym}^{g-1-i}(\D^{i+1}U )\Bigr\},
\end{equation}
where $\delta_2$ is the composition of the Koszul differential
\[\bw^2(\D^{i+1}U) \otimes \mathrm{Sym}^{g-2-i}(\D^{i+1}U ) \lra \D^{i+1}U \otimes \mathrm{Sym}^{g-1-i}(\D^{i+1}U)\]
with the map induced by (\ref{eq:Del1-to-wedge2})
\[\D^{2i}U \otimes \mathrm{Sym}^{g-2-i}(\D^{i+1}U )\lra \bw^2(\D^{i+1}U) \otimes \mathrm{Sym}^{g-2-i}(\D^{i+1}U).\]

\end{thm}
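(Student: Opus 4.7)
The plan is to derive the minimal free $S$-resolution of $R$ by the Kempf--Weyman technique and extract the weight-$1$ strand of its Koszul cohomology. Since this approach avoids the Gorenstein duality of Theorem~\ref{thm:shape-res-R} (which requires $\chr(\kk)\neq 2$), it produces a characteristic-free description of $K_{i,1}(\mc{T},\OO_{\mc{T}}(1))$.

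To begin, I would apply Theorem~\ref{thm:Basic-Theorem} with $Y=\PP^1$ to the short exact sequence $0 \to \xi \to \Sym^g U \oo \OO_{\PP^1} \to \mc{J} \to 0$, where $\mc{J}$ is the jet bundle of $\OO_{\PP^1}(g)$ and $\xi$ is the kernel of its Taylor homomorphism. This yields the minimal free $S$-resolution $F_\bullet$ of $\bar{R} := \bigoplus_d H^0(\PP^1,\Sym^d \mc{J})$ with terms
\[ F_i = H^0(\PP^1,\bw^i \xi)\oo S(-i)\,\oplus\, H^1(\PP^1,\bw^{i+1}\xi)\oo S(-i-1).\]
By Proposition~\ref{prop:resolution-Rbar}, the weight-$1$ contribution is $H^1(\PP^1,\bw^{i+1}\xi) \cong \D^{2i}U \oo \bw^{i+1}\Sym^{g-2}U$, which via the Hermite reciprocity isomorphism $\psi_{g-2-i}^{i+1}$ (Lemma~\ref{lem:psi_d-equivariant}) becomes $\D^{2i}U \oo \Sym^{g-2-i}(\D^{i+1}U)$---the source of $\delta_2$ in the theorem. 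In parallel, I would construct a minimal free resolution $G_\bullet$ of the quotient $\bar{R}/R \cong \bigoplus_{d\geq 1} H^0(\Gamma,\omega_\Gamma\oo \OO_\Gamma(d))$, whose weight-$1$ piece at level $i$ is naturally identified with $\D^{i+1}U\oo \bw^{i+1}\Sym^{g-1}U \cong \D^{i+1}U \oo \Sym^{g-1-i}(\D^{i+1}U)$---the target of $\delta_2$.

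Next I would lift the surjection $\bar{R}\twoheadrightarrow \bar{R}/R$ to a morphism $f_\bullet:F_\bullet\to G_\bullet$; minimality of both resolutions, together with the compatibility of the strand decompositions on the two sides, ensures that in every weight the entries of $f_i$ can be chosen to be constants. The shifted mapping cone $\op{cone}(f_\bullet)[-1]$ then resolves $R$, and Lemma~\ref{lem:minimization} identifies the terms of its minimization as $\ker(\ol{f}_{i-1})\oplus\coker(\ol{f}_i)$. Restricting to weight~$1$, I would deduce
\[ K_{i,1}(R)\cong \ker\!\Bigl(\ol{f}_i|_{\text{weight }1}:\D^{2i}U \oo \Sym^{g-2-i}(\D^{i+1}U)\to \D^{i+1}U \oo \Sym^{g-1-i}(\D^{i+1}U)\Bigr).\]

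The main obstacle is the final identification of $\ol{f}_i|_{\text{weight }1}$ with the map $\delta_2$ of the theorem. The expected decomposition is as follows: the Koszul differential on $\bw^{\bullet}\xi$, after passing to $H^1(\PP^1,-)$, corresponds under Hermite reciprocity---using the compatibility (\ref{eq:dkos-slam}) of $\psi_d^i$ with the Koszul differential $k_i^d$---precisely to the Koszul differential on $\Sym(\D^{i+1}U)$; and the extra factor $\Delta_1:\D^{2i}U\to \bw^2(\D^{i+1}U)$ of Lemma~\ref{lem:mu1-delta1} arises from the two-step divided-power comultiplication encoded in the commutative diagram (\ref{eq:comm-diag-co-multiplication}), reflecting the fact that $\mc{J}$ is built from a second-order Taylor expansion. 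Making this matching precise demands a careful bookkeeping of sign conventions and of how Hermite reciprocity intertwines the two Koszul differentials, and it is here that the explicit, characteristic-free version of $\psi_d^i$ developed in Section~\ref{subsec:Hermite} becomes indispensable.
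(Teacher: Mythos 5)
The proposal has the right skeleton---a Kempf--Weyman resolution $F_\bullet$ of $\tl R$, a comparison map onto a complex capturing the quotient, a minimized mapping cone, and the anticipated role of Hermite reciprocity in matching $\ol f_i$ with $\delta_2$---but it contains two genuine gaps, and both are precisely the places where the paper's argument had to do something extra.

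\textbf{First, the identification $\tl R/R\cong C=\bigoplus_{d\geq 1}H^0(\Gamma,\omega_\Gamma\otimes\OO_\Gamma(d))$ is false in characteristic $2$.} There the map $\pi|_Z\colon Z\to\widehat{\mc T}$ has degree $2$ (see Section~\ref{subsec:pparts}), so the $R$-module $\tl R/R$ has Krull dimension $3$, whereas $C$ is a module of dimension $2$; they cannot be isomorphic. Since the theorem is claimed in \emph{arbitrary} characteristic, this step cannot survive as stated. The paper's workaround is instructive: it only establishes a surjection $\tl R\onto C$ whose kernel $R'$ \emph{contains} $R$ (Proposition~\ref{prop:map-F-to-J}), so the mapping cone resolves $R'$, not $R$, and a separate observation is needed, namely that $R'/R$ is generated in degrees $\geq 2$ (because $q_0$ maps the degree-one generator space $\Sym^{g-2}U\subseteq F_0$ isomorphically onto the generators of $C$, and sends the degree-zero generator to zero). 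This degree bound is what makes $\Tor_i^S(R,\kk)_{i+1}=\Tor_i^S(R',\kk)_{i+1}$, and your proposal omits it entirely: as written it would at best compute $K_{i,1}$ of the wrong module in characteristic $2$.

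\textbf{Second, the asserted structure of $G_\bullet$ is incorrect.} If $G_\bullet$ is to be the minimal resolution of $C$, then $G_i$ is generated by $\ker(\ol p_{i+1})$, a proper subspace of $\D^{i+1}U\otimes\bw^{i+1}\Sym^{g-1}U$ of dimension $(g-1-i)\binom{g}{i}$, not the full $\D^{i+1}U\otimes\bw^{i+1}\Sym^{g-1}U$ as you write. The paper never resolves $C$ directly: it constructs the linear complex $J_\bullet$ with $H_0(J_\bullet)=\kk$ and $H_1(J_\bullet)=C$, builds $q_\bullet\colon F_\bullet\to J_{\bullet+1}$, and only afterward shows $\op{Im}(q_i)\subseteq C_i=\ker(p_{i+1})$ (Corollary~\ref{cor:F-to-C}). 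Because $C_i$ is a direct summand of $J_{i+1}$, the kernel one ultimately wants is unchanged---$\ker(\tl q_i)=\ker(q_i)$---so your dimensional error happens to be harmless for the final conclusion, but you neither notice nor justify why; and you assert without argument that the lifted map $f_\bullet$ can be chosen with constant entries, which is only automatic once one has the explicit $q_\bullet$ (in particular $q_0(S)=0$, since a priori a map from the degree-zero generator of $F_0$ to the degree-one generators of the target would have a degree-one entry). The remaining matching of $\ol f_i$ with $\delta_2$ is correctly flagged as the technical heart, but it is left unexecuted; this is precisely where the compatibility diagrams of the Hermite map with the Koszul differentials (the analogue of the square used in Corollary~\ref{cor:F-to-C}) must be verified.
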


\vskip 4pt

Before delving into the proofs, we summarize the contents of this section. We begin by defining Weyman modules in Section~\ref{subsec:rep-theory}, and explaining how Theorem~\ref{thm:nonvanishing-kij} follows from Theorems~\ref{thm:shape-res-R},~\ref{thm:Ki2=Wi+2}, and from the results in Section~\ref{sec:fin-length-koszul}. In Section~\ref{subsec:summary-cor=kw} we give a roadmap to the proofs of Theorems~\ref{thm:shape-res-R},~\ref{thm:Ki2=Wi+2}, and~\ref{cor=kw}, which occupy the rest of the section. In Section~\ref{subsec:cohom-P1}, we summarize the $\sl_2$ description of cohomology of line bundles on $\PP^1$, which will be used throughout. In Section~\ref{subsec:pparts} we explain how to parametrize the affine cone over $\mc{T}$ using the bundle of principal parts. Sections~\ref{sec:resln-R-bar} -- \ref{sec:resln-R} describe the technical parts of our arguments: they rely heavily on the preliminaries discussed in Sections~\ref{sec:Hermite-rec} and~\ref{sec:KW}, particularly on Hermite reciprocity and the Kempf--Weyman technique, and they conclude with a proof of Theorem~\ref{cor=kw}. Using the foundation laid in the preceding sections, we give a quick proof of Theorems~\ref{thm:shape-res-R} and~\ref{thm:Ki2=Wi+2} in Section~\ref{subsec:char-not-2}. We end in Section~\ref{subsec:char2} with a concrete discussion of the resolution of $\mc{T}$ in characteristic $2$.

\subsection{Weyman modules in characteristic different from $2$}
\label{subsec:rep-theory}

For $n\geq 2$ we define
\begin{equation}\label{eq:def-W-mod}
W^{(n-1)} := W(\D^{n-1}U,\D^{2n-4}U)
\end{equation}
where $\D^{2n-4}U$ is regarded as a subspace of $\bw^2(\D^{n-1}U)$ via the map $\Delta_1$ (see (\ref{eq:Del1-to-wedge2}) and Lemma~\ref{lem:mu1-delta1} with $a=n-1$). We define $\mc{R}^{(n-1)}$ to be the resonance variety $\mc{R}(\D^{n-1}U,\D^{2n-4}U)$ (see (\ref{eq:defr})).

\begin{lemma}\label{lem:fdiml-Weyman}
 Let $p=\op{char}(\kk)$. If $p=0$ or $p\geq n$, then $\mc{R}^{(n-1)}=\{0\}$. If $3\leq p\leq n-1$ then $\mc{R}^{(n-1)}\neq\{0\}$.
\end{lemma}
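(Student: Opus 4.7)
The plan is to translate the defining condition of $\mc{R}^{(n-1)}$ into a concrete statement about polynomials in one variable, via the dual picture of the Wahl map. Using the pairings from Section~\ref{subsec:Sym-D}, we have $(\D^{n-1}U)^{\vee} = \Sym^{n-1}(U^{\vee})$ and $(\D^{2n-4}U)^{\vee} = \Sym^{2n-4}(U^{\vee})$. The dual of the inclusion $\Delta_1:\D^{2n-4}U\hookrightarrow\bw^2\D^{n-1}U$ is the Wahl surjection $\mu_1:\bw^2\Sym^{n-1}(U^{\vee})\twoheadrightarrow\Sym^{2n-4}(U^{\vee})$ of (\ref{eq:Wahl-map}), so $K^{\perp}=\ker(\mu_1)$. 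Unwinding (\ref{eq:defr}), a non-zero $a\in V^{\vee} = \Sym^{n-1}(U^{\vee})$ lies in $\mc{R}^{(n-1)}$ precisely when there exists $b\in\Sym^{n-1}(U^{\vee})$, linearly independent from $a$, with $\mu_1(a\wedge b)=0$. Expanding $a=\sum a_i y^i$ and $b=\sum b_j y^j$ and using the formula $\mu_1(y^i\wedge y^j)=(i-j)y^{i+j-1}$ yields the Wronskian identity
\[ \mu_1(a\wedge b)\;=\;a'\,b-a\,b', \]
so membership in $\mc{R}^{(n-1)}\setminus\{0\}$ is equivalent to the existence of a partner $b$, independent from $a$, satisfying $a'b-ab'=0$.

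Assume first that $p=0$ or $p\geq n$, and suppose by contradiction that $a,b\in\Sym^{n-1}(U^{\vee})$ are linearly independent with $a'b-ab'=0$. Since $a\neq 0$, pass to the fraction field $\kk(y)$ and rewrite this as $(b/a)'=0$. In characteristic zero this forces $b/a\in\kk$, contradicting linear independence. In characteristic $p\geq n$, the kernel of $d/dy$ on $\kk(y)$ is $\kk(y^p)$, so we may write $b/a=P(y^p)/Q(y^p)$ with $P,Q\in\kk[t]$ coprime. Substituting $t=y^p$ preserves coprimality in $\kk[y]$ (any common root $\alpha$ of $P(y^p)$ and $Q(y^p)$ would give a common root $\alpha^p$ of $P$ and $Q$), so from $b\,Q(y^p)=a\,P(y^p)$ we deduce $Q(y^p)\mid a$ and $P(y^p)\mid b$. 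If $b/a$ were non-constant, one of $\deg P$, $\deg Q$ would be positive, forcing $\deg a\geq p\geq n$ or $\deg b\geq p\geq n$, both contradicting the bound $\deg a,\deg b\leq n-1$. Hence $b/a$ is constant, again contradicting linear independence. This proves $\mc{R}^{(n-1)}=\{0\}$.

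Assume now $3\leq p\leq n-1$, and take $a:=y^p$ and $b:=1$, both lying in $\Sym^{n-1}(U^{\vee})$ since $p\leq n-1$. Then $a'=p\,y^{p-1}=0$ and $b'=0$, so $\mu_1(a\wedge b)=a'b-ab'=0$ in $\Sym^{2n-4}(U^{\vee})$. Since $a$ and $b$ are linearly independent, $a\wedge b$ is a non-zero element of $K^{\perp}$, and therefore $a\in\mc{R}^{(n-1)}\setminus\{0\}$.

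The main subtle point is the vanishing direction when $p\geq n$: one must rule out exotic ``Wronskian-zero'' pairs $(a,b)$ that are permitted by the Frobenius in positive characteristic. Once the Wronskian interpretation of $\mu_1$ is in place, the coprimality argument in $\kk(y^p)$ dispatches this obstruction cleanly, while the non-vanishing case is witnessed by an explicit pair.
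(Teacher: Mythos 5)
Your proof is correct, but it takes a genuinely different route from the paper's. Both start from the same dual picture -- the resonance $\mc{R}^{(n-1)}$ is non-trivial if and only if $\ker(\mu_1)$ contains a non-zero decomposable element, and both use the explicit pair $(1,y^p)$ for the non-vanishing direction. The divergence is in the vanishing direction. The paper argues by rescaling $f_1\wedge f_2$ to a pair of monic polynomials and extracting a non-zero coefficient of $\mu_1(f_1\wedge f_2)$: if the degrees $r>s$ differ, the leading term $(r-s)y^{r+s-1}$ is non-zero because $0<r-s\leq n-1<p$; if the degrees agree, one looks at the top index where the two polynomials differ. Your argument instead identifies $\mu_1(a\wedge b)$ with the Wronskian $a'b-ab'$, observes that its vanishing forces $(b/a)'=0$, and then invokes the fact that the field of constants of $\kk(y)$ under $d/dy$ is $\kk(y^p)$, after which the degree bound $\deg a,\deg b\leq n-1<p$ rules out a genuine Frobenius contribution via a coprimality argument. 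Your approach isolates the Wronskian structure of $\mu_1$ cleanly and makes the role of Frobenius transparent, at the cost of importing a small fact from differential algebra in positive characteristic; the paper's argument is more pedestrian but entirely self-contained, needing only elementary leading-coefficient bookkeeping. Both are valid and of comparable length.
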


\begin{proof}
Consider the map $\mu_1:\bw^2\Sym^{n-1}(U^{\vee})\lra\Sym^{2n-4}(U^{\vee})$ (defined in (\ref{eq:mu1-to-wedge2}) and Lemma~\ref{lem:mu1-delta1}), which is dual to $\Delta_1$. The condition $\mc{R}^{(n-1)}\neq\{0\}$ is equivalent to the fact that $\ker(\mu_1)$ contains a non-zero decomposable form $f_1\wedge f_2$. We choose a basis $(1,y)$ for $U^{\vee}$, so that for each $d\geq 0$ we can identify $\Sym^d(U^{\vee})$ with the space of polynomials of degree at most $d$ in $y$.

\vskip 3pt

If $3\leq p\leq n-1$ then $f_1=1$ and $f_2=y^p$ belong to $\Sym^{n-1}(U^{\vee})$, and $\mu_1(f_1\wedge f_2) = \mu_1(1\wedge y^p) = 0$. Since $1\wedge y^p\neq 0$, we conclude using (\ref{eq:defr}) that $\mc{R}^{(n-1)}\neq\{0\}$. Suppose now by contradiction that $\ker(\mu_1)$ contains a non-zero decomposable form $f_1\wedge f_2$, and that $p=0$ or $p\geq n$. By rescaling $f_1\wedge f_2$, we may assume that $f_1,f_2$ are monic polynomials in $y$, say
 \[f_1=y^r + b_{r-1}y^{r-1}+\cdots + b_0,\quad f_2 = y^s + c_{s-1}y^{s-1}+\cdots+c_0.\]
If $r\neq s$, we may assume $r>s$ and we get $\mu_1(f_1\wedge f_2) = (r-s)y^{r+s-1} + h(y)$, where $\deg(h)<r+s-1$  and $0<r-s\leq n-1$.
It follows that $p$ cannot divide $r-s$, hence $\mu_1(f_1\wedge f_2)\neq 0$, a contradiction. Suppose finally that $r=s$, and let $i$ be the maximal index for which $b_i\neq c_i$, which exists since $f_1\wedge f_2\neq 0$. We get
\[\mu_1(f_1\wedge f_2) = (r-i)\cdot(b_i-c_i)\cdot y^{r+i-1} + h(y),\mbox{ where }\deg(h)<r+i-1.\]
Since $0<r-i\leq n-1$ and $b_i-c_i\neq 0$, it follows again that $f_1\wedge f_2\not\in\ker(\mu_1)$, yielding a contradiction.
\end{proof}

\vskip 5pt

We can now give a quick proof of Theorem~\ref{thm:nonvanishing-kij}, assuming the statements of Theorems~\ref{thm:shape-res-R} and~\ref{thm:Ki2=Wi+2}.

\begin{proof}[Proof of Theorem~\ref{thm:nonvanishing-kij}]
By Theorem~\ref{thm:shape-res-R}, the only potentially non-zero $b_{i,2}$ occur for $1\leq i\leq g-3$, so we fix one such index $i$. Since $p\neq 2$, Theorem~\ref{thm:Ki2=Wi+2} applies to give
\begin{equation}\label{eq:ki2=dimWi+2}
b_{i,2} = \dim \bigl(W^{(i+2)}_{g-3-i}\bigr).
\end{equation}
Let $n=i+3$, so that $W^{(i+2)}=W^{(n-1)}$, to which we apply Lemma~\ref{lem:fdiml-Weyman}. Suppose first $3\leq p\leq n-1 = i+2$, so the support $\mc{R}^{(n-1)}$ of $W^{(n-1)}$ is positive dimensional, in particular $W^{(n-1)}_q \neq 0$ for all $q\geq 0$. Taking $q=g-3-i$ we obtain $b_{i,2}\neq 0$, in particular the implication ``$\Longleftarrow$" in  (\ref{eq:ki2-not-0-psmall}) holds when $3\leq p\leq \frac{g+1}{2}$. If $p\geq \frac{g+2}{2}$,  using the assumption $p\leq i+2$, we conclude that $i\geq \frac{g-2}{2}$, so conditioned on $3\leq p \leq i+2$, the equivalence in (\ref{eq:ki2-not-0-plarge}) holds.

\vskip 3pt

Suppose now that $p=0$ or $p\geq n=i+3$, so that $\mc{R}^{(n-1)}=\{0\}$. From Theorems~\ref{thm:vanishing-koszul} and~\ref{thm:boundHilb} we obtain the equivalence $W^{(n-1)}_q \neq 0$ if and only if  $0\leq q\leq n-4$,
which, in view of (\ref{eq:ki2=dimWi+2}), implies that
\begin{equation}\label{eq:equiv-ki2}
b_{i,2} \neq 0 \Longleftrightarrow 0\leq g-3-i\leq i-1 \Longleftrightarrow \frac{g-2}{2}\leq i\leq g-3.
\end{equation}

If $3\leq p\leq \frac{g+1}{2}$ then $i\leq p-3 \leq \frac{g-5}{2}$, proving based on (\ref{eq:equiv-ki2}) that $b_{i,2}=0$ and establishing the remaining part of the equivalence (\ref{eq:ki2-not-0-psmall}). If $p=0$ or $p\geq \frac{g+2}{2}$ then (\ref{eq:equiv-ki2}) agrees with (\ref{eq:ki2-not-0-plarge}).
\end{proof}

\subsection{The roadmap to the minimal free resolution of $R$.}
\label{subsec:summary-cor=kw}
For the  convenience of the reader, we summarize here our analysis of the minimal free resolution of $R$ as an $S$-module, which concludes with the proofs of Theorems~\ref{thm:shape-res-R},~\ref{thm:Ki2=Wi+2}, and~\ref{cor=kw}. The key players in our proof are four minimal complexes of free graded $S$-modules, denoted $F_{\bullet},J_{\bullet},C_{\bullet}$ and $K_{\bullet}$, the first two of which are constructed via the Kempf--Weyman geometric technique (see Section~\ref{subsec:Kempf-Weyman}). For visualization purposes, we record their Betti diagrams below: our convention is that for a complex $A_{\bullet}$, a $``*"$ in row $j$ and column $i$ indicates that the free module $A_i$ has generators of degree $i+j$, while a $``-"$ indicates that no such generators exist. We will also write $``1"$ instead of $``*"$ when $A_i \cong S(-i-j)$ is a free module of rank one, generated in degree $i+j$. With these conventions, our four complexes take the following shapes:

\[
F_{\bullet}:\qquad
\begin{array}{c|cccccc}
     &0&1&2&\cdots&g-3&g-2\\ \hline
     0&1&-&-&\cdots&-&-\\
     1&*&*&*&\cdots&*&*\\
\end{array}
\qquad\qquad
C_{\bullet}:\qquad
\begin{array}{c|ccccccc}
     &0&1&2&\cdots&g-3&g-2&g-1 \\ \hline
     0&-&-&-&\cdots&-&-&-\\
     1&*&*&*&\cdots&*&*&-\\
     2&-&-&-&\cdots&-&-&1\\
\end{array}
\]
\[
J_{\bullet}:\qquad
\begin{array}{c|cccccc}
     &0&1&2&\cdots&g-1&g\\ \hline
     0&1&*&*&\cdots&*&*\\
\end{array}
\qquad\qquad
K_{\bullet}:\qquad
\begin{array}{c|cccccc}
     &0&1&2&\cdots&g-1&g\\ \hline
     0&1&*&*&\cdots&*&1\\
\end{array}
\]

\vskip 4pt

\noindent
\textbf{The complex $F_{\bullet}$.} The affine cone $\widehat{\mc{T}}$ over $\mc{T}$ admits a dominant map from the total space $Z$ of the bundle of principal parts associated with the sheaf $\OO_{\PP^1}(g)$ (see Section~\ref{subsec:pparts}). As such, $R$ is naturally a subring of $\tl{R} = H^0(Z,\mc{O}_Z)$. The complex $F_{\bullet}$ gives the minimal free resolution of $\tl{R}$ (see Section~\ref{sec:resln-R-bar}). In characteristic different from two, the map $Z\to \widehat{\mc{T}}$ is birational, and $\tl{R}$ is the normalization of $R$.

\vskip 4pt
\noindent
\textbf{The complex $C_{\bullet}$.} The affine cone $\widehat{\Gamma}$ over the rational normal curve is Cohen--Macaulay, so it has a canonical module which we denote by $C$. The complex $C_{\bullet}$ is the minimal resolution of $C$ (see Section~\ref{sec:resln-C}).

\vskip 4pt
\noindent
\textbf{The complex $J_{\bullet}$.} The complex $J_{\bullet}$ is the only one that is not a resolution (see Proposition~\ref{prop:J-complex}): it is a linear complex with two non-zero cohomology groups, namely $H_0(J_{\bullet}) = \kk$, and $H_1(J_{\bullet})=C$.

\vskip 4pt
\noindent
\textbf{The complex $K_{\bullet}$.} The complex $K_{\bullet}$ is the Koszul complex resolving the residue field $\kk \cong S/\mf{m}$.

\vskip 4pt

We now describe the various links between the complexes $F_{\bullet},J_{\bullet},C_{\bullet}$ and $K_{\bullet}$.

\vskip 4pt

\noindent
\textbf{The relationship between $C_{\bullet},J_{\bullet}$ and $K_{\bullet}$.} There is a degree preserving map  $p_{\bullet}:J_{\bullet}\lra K_{\bullet}$, constructed in Proposition~\ref{prop:map-J-to-K}, inducing an isomorphism $\kk=H_0(J_{\bullet}) \cong H_0(K_{\bullet})=\kk$. The mapping cone of $p_{\bullet}$ gives rise after minimization (as in Section~\ref{subsec:minimization}) and a homological shift, to a minimal resolution of $C$, and is therefore quasi-isomorphic to $C_{\bullet}$. Precisely, $C_i = \ker(p_{i+1})$ for $i=0,\ldots,g-2$ (see Corollary~\ref{cor:C_i=ker-pi+1}).

\vskip 4pt

\noindent
\textbf{The relationship between $F_{\bullet},J_{\bullet}$ and (\ref{eq:ker-delta-2}).} There is a map $q_{\bullet}:F_{\bullet}\lra J_{\bullet+1}$ of complexes, which is constructed in Proposition~\ref{prop:map-F-to-J}, inducing a surjection $\tl{R} = H_0(F_{\bullet}) \onto H_1(J_{\bullet})=C$. Using the explicit Hermite reciprocity discussed in Section~\ref{subsec:Hermite}, it follows that if we restrict the map $q_i$ to the minimal generators of degree $i+1$ of $F_i$ and $J_{i+1}$, then we get precisely the map $\delta_2$ in (\ref{eq:ker-delta-2}), so
\begin{equation}\label{eq:kerd2=kerqi-oo-kk}
\ker(\delta_2)=\ker\bigl((q_i \oo \kk)_{i+1}\bigr)\mbox{ for }i=0,\ldots,g-2.
\end{equation}

\vskip 4pt

\noindent
\textbf{The relationship between $q_{\bullet}$ and the first linear strand of the resolution of $R$.}
The morphism $q_0$ sends the unique generator of $F_0$ of degree $0$ to zero, and therefore $R$ is contained in $R':=\ker(\tl{R}\onto C)$. The quotient $R'/R$ is generated in degree $\geq 2$, so the resolutions of $R$ and $R'$ have the same first linear strand. To construct the minimal resolution of $R'$, we show in Corollary~\ref{cor:F-to-C} that $q_i\circ p_{i+1}=0$ for $i=0, \ldots,g-2$, so $q_i$ sends $F_i$ into $C_i\subseteq J_{i+1}$. By restricting the range, this induces a map of complexes $\tl{q}_{\bullet}:F_{\bullet}\lra C_{\bullet}$, lifting the quotient map $\tl{R}\onto C$. Minimizing the mapping cone of $\tl{q}_{\bullet}$ as in Lemma~\ref{lem:minimization}, we get a minimal resolution of $R'$, with
\begin{equation}\label{eq:Tor-R'}
\Tor_i^S(R',\kk)_{i+1} = \ker\bigl((\tl{q}_i \oo \kk)_{i+1}\bigr),\quad \Tor_i^S(R',\kk)_{i+2} = \coker\bigl((\tl{q}_{i+1} \oo \kk)_{i+2}\bigr).
\end{equation}
We conclude in Section~\ref{sec:resln-R} that
\[ K_{i,1}(\mc{T},\OO_{\mc{T}}(1)) = \Tor_i^S(R,\kk)_{i+1} = \Tor_i^S(R',\kk)_{i+1} = \ker\bigl((\tl{q}_i \oo \kk)_{i+1}\bigr) = \ker\bigl((q_i \oo \kk)_{i+1}\bigr).\]
It then follows from (\ref{eq:kerd2=kerqi-oo-kk}) that $K_{i,1}(\mc{T},\OO_{\mc{T}}(1))=\ker(\delta_2)$ as in (\ref{eq:ker-delta-2}), proving Theorem~\ref{cor=kw}.

\vskip 4pt

\noindent
\textbf{The minimal resolution of $R$ for $\chr(\kk)\neq 2$.}
By construction, we have $\coker(\tl{q}_i) = \ker(p_{i+1}) / \Im(q_i)$, which allows us to realize $\coker(\tl{q}_i)$ as the middle homology of a $3$-term complex
\[ F_i \overset{q_i}{\lra} J_{i+1} \overset{p_{i+1}}{\lra} K_{i+1}.\]
Restricting this sequence to minimal generators of degree $(i+1)$ we obtain via Hermite reciprocity the $3$-term complex defining the degree $g-2-i$  component of the Weyman module $W^{(i+1)}$, and conclude in Lemma~\ref{lem:coker-tl-q} that $\coker(\tl{q}_i) \oo_S \kk = W^{(i+1)}_{g-2-i}$. Taking $i=0$ we find that $\tl{q}_1$ is surjective, and deduce that $R'=R$ in Corollary~\ref{cor:R=R'}. Combining this with (\ref{eq:Tor-R'}) proves Theorem~\ref{thm:Ki2=Wi+2}. To prove Theorem~\ref{thm:shape-res-R} one is left with verifying that $\tl{q}_{g-2}$ is injective, which follows from one final application of Hermite reciprocity and is explained at the end of Section~\ref{subsec:char-not-2}.

\vskip 4pt

\noindent
\textbf{The minimal resolution of $R$ for $\chr(\kk)=2$.} It follows from the discussion in Section~\ref{subsec:pparts} that $\mc{T}$ is a variety of minimal degree in characteristic two. We show in Section~\ref{subsec:char2} that it is a rational normal scroll, and conclude that its minimal resolution is given by an Eagon--Northcott complex.

\subsection{Cohomology on $\PP^1$}
\label{subsec:cohom-P1}

Let $U$ be a two dimensional vector space as before and recall that $\PP^1=\mathbb P\bigl(U^{\vee}\bigr)$. The tautological quotient map $U\oo\OO_{\PP^1} \onto \OO_{\PP^1}(1)$ gives rise to an exact Koszul complex
\begin{equation}\label{eq:tautological-seq}
0\lra\bw^2 U \oo \OO_{\PP^1}(-1)\lra U\oo\OO_{\PP^1} \lra\OO_{\PP^1}(1)\lra 0,
\end{equation}
and using (\ref{lazb}) we get a natural identification $M_{\OO_{\PP^1}(1)} \cong  \bw^2 U \oo \OO_{\PP^1}(-1)$. The canonical bundle $\omega_{\PP^1}$ is naturally identified with $\bw^2 U \oo \OO_{\PP^1}(-2)$. The cohomology groups of twists of $\OO_{\PP^1}$ are then given~by:
\[
H^0(\PP^1,\OO_{\PP^1}(a)) = \begin{cases}
 \Sym^a U & \mbox{if }a\geq 0, \\
0 & \mbox{otherwise,}
\end{cases}
\quad
H^1(\PP^1,\OO_{\PP^1}(a)) = \begin{cases}
 \bw^2 U^{\vee} \oo \D^{-a-2}(U^{\vee}) & \mbox{if }a \leq -2, \\
0 & \mbox{otherwise.}
\end{cases}
\]


\noindent Thinking of (\ref{eq:tautological-seq}) as a two-step resolution of $\OO_{\PP^1}(1)$, the sheaf $\mc{O}_{\PP^1}(a) \cong \Sym^a(\mc{O}_{\PP^1}(1))$ is then resolved by the $a$-th symmetric power of the said two-step resolution. This gives an exact sequence
\begin{equation}\label{eq:koszul-P1}
0\lra \Sym^{a-1} U\oo \bw^2 U \oo \OO_{\PP^1}(-1) \overset{\iota_a}{\lra} \Sym^{a}U\oo\OO_{\PP^1} \lra \OO_{\PP^1}(a)\lra 0,
\end{equation}
where the second map is given by evaluation of sections, and the first is induced by the inclusion in (\ref{eq:tautological-seq}), and by the multiplication  $\mu:\Sym^{a-1}U\oo U\lra \Sym^a U$. Using (\ref{lazb}) we get from (\ref{eq:koszul-P1}) an identification
\begin{equation}\label{eq:Mbun-Oa}
 M_{\OO_{\PP^1}(a)} \cong  \Sym^{a-1} U \oo \bw^2 U \oo \OO_{\PP^1}(-1).
\end{equation}


\subsection{Jet bundles and the Gauss map}\label{subsec:pparts}
Recall that $\Gamma$ is the rational normal curve of degree $g$ in $\PP^g$, obtained as the image of the embedding of $\PP^1$ via the complete linear system $|\OO_{\PP^1}(g)|$, and  that $\mc{T}\subseteq \PP^g$ denotes the tangential variety of $\Gamma$. The first jet bundle $\mc{J}:=\mc{P}^1\bigl(\OO_{\PP^1}(g)\bigr)$ has the explicit description
\begin{equation}\label{eq:eta-split}
\mc{J}= \begin{cases}
U \otimes \OO_{\PP^1}(g-1) & \mbox{if }p\nmid g; \\
\bw^2 U \oo \OO_{\PP^1}(g-2)\oplus\OO_{\PP^1}(g) & \mbox{if }p|g.
\end{cases}
\end{equation}
The vector bundle $\mc{J}$ sits in an exact sequence
\begin{equation}\label{eq:ses-eta}
 0 \lra \bigl(\bw^2 U\bigr)^{\oo 2} \oo \Sym^{g-2}U \oo \OO_{\PP^1}(-2) \overset{\iota}{\lra} \Sym^g U \oo \mc{O}_{\PP^1} \lra \mathcal{J} \lra 0,
\end{equation}
where $\bigl(\bw^2 U\bigr)^{\oo 2} \oo \Sym^{g-2}U\otimes \OO_{\PP^1}(-2) \cong \mc{N}_{\Gamma/\PP^g}^{\vee}(g)$, see for instance \cite[Corollary ~2.5]{CR} for a characteristic free calculation. The inclusion map $\iota$ in (\ref{eq:ses-eta}) is induced by composing the map $\iota_{g}$ in (\ref{eq:koszul-P1}) with $\op{id}_{\bw^2 U} \oo \iota_{g-1}(-1)$. We denote by
$\nu:\PP(\mc{J})\rightarrow \PP^g$ the map induced by the subspace of sections
$$\mbox{Sym}^g U\subseteq H^0(\PP^1,\mc{J})\cong H^0\bigl(\PP(\mc{J}), \OO_{\PP(\mc{J})}(1)\bigr).$$
The image of $\tau$ is the tangential variety $\mc{T}$ and $\tau$ is  a resolution of singularities if $\chr(\kk)\neq 2$. Let
\begin{equation}\label{eq:def-mcS-Z-X}
\mc{S} := \Sym_{\OO_{\PP^1}}(\mc{J}) \ \mbox{ and } Z := \relSpec_{\PP^1}(\mc{S}) \lhra \D^g(U^{\vee}) \times \PP^1 =: X.
\end{equation}
If we let $\bb{G}$ denote the Grassmannian of two dimensional subspaces of $\D^g(U^{\vee})$, then the surjection in (\ref{eq:ses-eta}) gives rise to the \defi{Gauss map} $\gamma:\Gamma\lra\bb{G}$, whose image we denote by $\Gamma^*$. We have a diagram (see \cite{kaji})
\begin{equation}\label{eq:pparts-gauss-diag}
\begin{aligned}
\xymatrix{
& \Gamma\cong\PP^1 \ar[d]_{\gamma} & Z \ar[l]_(.35){\rho} \ar@{^{(}->}[r] \ar[d]_{\pi_{|_Z}} & X \ar[d]^{\pi} \\
\bb{G} & \Gamma^* \ar@{_{(}->}[l] & \widehat{\mc{T}} \ar@{^{(}->}[r] & \D^g(U^{\vee})
}
\end{aligned}
\end{equation}
where $\widehat{\mathcal{T}}$ is the affine cone over $\mathcal{T}$. We recall that $R=H^0(\widehat{\mc{T}},\mc{O}_{\widehat{\mc{T}}})$ denotes the homogeneous coordinate ring of $\mc{T}$, and let $\tl{R} = H^0(Z,\mc{O}_Z)$. Since $\pi$ maps $Z$ onto $\widehat{T}$, the pull-back along $\pi_{|_Z}$ leads to an inclusion of $R$ into $\tl{R}$, which will be analyzed later on. Applying \cite[Lemma~1.6,~Corollary~1.7,~Remark ~1.9]{kaji}, we conclude that $\deg(\mc{T}) = \deg(\Gamma^*)$ and $\deg(\gamma)=\deg(\pi_{|_Z})$.  Moreover, $\gamma$ can be written in an affine chart as
\begin{equation}\label{eq:chart-T}
t \overset{\gamma}{\lra} \mbox{row Span}\begin{bmatrix}
1 & t & t^2 & \cdots & t^g \\
0 & 1 & 2t & \cdots & gt^{g-1}
\end{bmatrix}.
\end{equation}

Suppose first that $\chr(\kk)\neq 2$. Since the $2\times 2$ minors of the above matrix span the space of polynomials in $t$ of degree up to $2g-2$, it follows that when viewed in its Pl\"ucker embedding, $\Gamma^*$ is a rational normal curve (in its span) of degree $2g-2$. Moreover, the map $\gamma$ is an isomorphism. We conclude that $\deg(\Gamma^*)=2g-2$ and $\deg(\gamma)=1$, and therefore $\deg(\mc{T})=2g-2$ and $\pi_{|_Z}$ is birational. It follows that the map $\pi_{|_Z}$ gives a resolution of singularities for $\widehat{\mc{T}}$, and in particular $\tl{R}$ is the normalization of $R$.

Suppose now that $\chr(\kk)=2$. The $2\times 2$ minors of (\ref{eq:chart-T}) span the space of polynomials in $t^2$ of degree $g-1$, so $\gamma$ can be regarded as the composition of the Frobenius map with a Veronese embedding of degree $g-1$. It follows that $\deg(\mc{T}) = \deg(\Gamma^*)=g-1$ and $\deg(\pi_{|_Z})=\deg(\gamma)=2$.
\vskip 4pt

\subsection{The resolution of $\tl{R}$}
\label{sec:resln-R-bar}

Recall that $S=\Sym(\Sym^g U)\cong \mbox{Sym}\ H^0\bigl(\mc{T}, \OO_{\mc{T}}(1)\bigr)$ denotes the homogeneous coordinate ring of~$\PP^g$, let $X,Z$ as in (\ref{eq:def-mcS-Z-X}), and let $\tl{R}:=H^0(Z,\mc{O}_Z)=\bigoplus_{n\geq 0} H^0(\PP(\mc{J}),\OO_{\PP(\mc{J})}(n))$. We describe the minimal free resolution of $\tl{R}$ as an $S$-module.

\begin{proposition}
\label{prop:resolution-Rbar}
 The $S$-module $\tl{R}:=H^0(Z,\OO_Z)$ has an $\mf{sl}_2$-equivariant minimal free resolution given by
 \[F_{\bullet}:\qquad F_{g-2} \lra \cdots \lra F_1 \lra F_0 \bigl[\lra \tl{R} \lra 0\bigr] , \]
 where $F_0 = S \oplus \Sym^{g-2}U \oo S(-1)$, and
 \[F_i = \Bigl(\D^{2i}U \oo \bw^{i+1}\Sym^{g-2}U\Bigr)\oo S(-i-1) \  \mbox{ for }i=1,\ldots,g-2.\]
 The linear part of the differential $\pd^F_i : F_i \lra F_{i-1}$ is defined  via an $\sl_2$-equivariant map
 \[ \D^{2i}U \oo \bw^{i+1}\Sym^{g-2}U \lra \Bigl(\D^{2i-2}U \oo \bw^{i}\Sym^{g-2}U \Bigr) \oo \Sym^g U\]
 induced by
 \[ \Delta^{(2)}:\D^{2i}U \overset{(\ref{eq:comm-diag-co-multiplication})}{\lra} \D^{2i-2}U \oo \Sym^2 U,\quad k_{i+1}^{g-2}:\bw^{i+1}\Sym^{g-2}U \overset{(\ref{eq:dkos-slam})}{\lra} \bw^{i}\Sym^{g-2}U \oo \Sym^{g-2}U,\]
 \[\mbox{ and }\mu:\Sym^2 U \oo \Sym^{g-2}U \overset{(\ref{eq:multiplication})}{\lra} \Sym^g U.\]
 If we ignore the quadratic part of the differential $\pd^F_1 : F_1 \lra F_0$, then we have the formula
 \begin{equation}\label{eq:def-delF}
  \pd^F_i(x^{(t)} \oo s_{\ll}(x)) = \sum_{\substack{u+s = t \\ u\leq 2,\ s\leq 2i-2}} \sum_{j=1}^{i+1} (-1)^{j-1} \cdot x^{(s)} \oo s_{\ll^{\hat{j}}}(x) \oo \left({2\choose u}\cdot x^u \cdot x^{\ll_j + i + 1 - j}\right)
 \end{equation}
 for all $0\leq t\leq 2i$ and $\ll\in\mc{P}_{i+1}(g-2-i)$.
\end{proposition}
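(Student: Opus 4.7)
The plan is to apply the Kempf--Weyman geometric technique (Theorem~\ref{thm:Basic-Theorem}) to the sequence (\ref{eq:ses-eta}), taking $Y=\PP^1$, $V=\Sym^g U$, $\eta=\mc{J}$, $\xi=(\bw^2 U)^{\oo 2}\oo\Sym^{g-2}U\oo\OO_{\PP^1}(-2)$, and $\mc{V}=\OO_{\PP^1}$. Since $\xi$ has rank $g-1$ and is the twist of a trivial vector bundle by a line bundle, its exterior powers simplify to $\bw^k\xi\cong (\bw^2 U)^{\oo 2k}\oo\bw^k\Sym^{g-2}U\oo\OO_{\PP^1}(-2k)$. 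The cohomology computations from Section~\ref{subsec:cohom-P1}, combined with the $\sl_2$-isomorphism $\D^d(U^{\vee})\cong (\bw^2 U^{\vee})^{\oo d}\oo\D^d U$, then yield the identifications $H^0(\PP^1,\OO_{\PP^1})=\kk$, $H^1(\PP^1,\xi)\cong \Sym^{g-2}U$, and $H^1(\PP^1,\bw^{i+1}\xi)\cong \D^{2i}U\oo\bw^{i+1}\Sym^{g-2}U$ for $i\geq 1$, exhibiting the stated terms of $F_\bullet$.

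To confirm that $F_\bullet$ is a minimal free resolution of $\tl{R}$, I will check via Theorem~\ref{thm:Basic-Theorem} that $H^i(\PP^1,\Sym^d\mc{J})=0$ for all $i>0$ and $d\geq 0$: the explicit splitting (\ref{eq:eta-split}) realizes $\Sym^d\mc{J}$ as a direct sum of line bundles of nonnegative degree, so their $H^1$ vanishes, and the same computation identifies $H_0(F_\bullet)=\bigoplus_{d\geq 0}H^0(\PP^1,\Sym^d\mc{J})=\tl{R}$. Minimality is automatic because each $F_i$ has generators concentrated in a single internal degree (namely $i+1$) apart from $F_0$, whose generators in degrees $0$ and $1$ give rise to the unavoidable quadratic tail of $\pd^F_1$.

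The bulk of the work will be identifying the linear part of $\pd^F_i$ with the explicit formula (\ref{eq:def-delF}). By construction this linear part is induced by the Koszul sheaf map $\bw^{i+1}\xi\to\bw^i\xi\oo\Sym^g U$, passed through $H^1$. I will factor the inclusion $\xi\hookrightarrow\Sym^g U\oo\OO_{\PP^1}$ via the twisted short exact sequence (\ref{eq:koszul-P1}) with $a=2$ (giving $\OO_{\PP^1}(-2)\hookrightarrow\Sym^2 U\oo\OO_{\PP^1}$) tensored with $(\bw^2 U)^{\oo 2}\oo\Sym^{g-2}U$, followed by the multiplication $\mu:\Sym^2 U\oo\Sym^{g-2}U\to\Sym^g U$. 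Tracking these pieces through cohomology produces respectively: the exterior Koszul map $k_{i+1}^{g-2}$ written in the Schur basis (\ref{eq:def-sll}); the comultiplication $\Delta^{(2)}:\D^{2i}U\to\D^{2i-2}U\oo\Sym^2 U$, arising as the connecting map of the relevant twisted short exact sequence; and the multiplication $\mu$ itself. Assembling these ingredients via the explicit Hermite reciprocity of Section~\ref{subsec:Hermite} delivers the stated formula, with signs $(-1)^{j-1}$ coming from the Koszul differential and binomial factors $\binom{2}{u}$ originating from the comparison (\ref{eq:(i)-i}) built into $\Delta^{(2)}$. The principal obstacle will be the careful bookkeeping of $\sl_2$-equivariant identifications (in particular, of the trivial but easily-misidentified $\bw^2 U$ twists) and the verification that the connecting map for the twist of (\ref{eq:koszul-P1}) is indeed the comultiplication on divided powers.
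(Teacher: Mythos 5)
Your first two paragraphs match the paper's argument closely: apply Theorem~\ref{thm:Basic-Theorem} to (\ref{eq:ses-eta}) with $\mc{V}=\OO_{\PP^1}$, compute $H^j\bigl(\PP^1,\bw^{i+j}\xi\bigr)$ using $\bw^k\xi\cong(\bw^2U)^{\oo 2k}\oo\bw^k\Sym^{g-2}U\oo\OO_{\PP^1}(-2k)$, and verify the vanishing of higher cohomology of $\mc{O}_Z$ via (\ref{eq:eta-split}). The separate minimality argument you offer is correct but not needed, since Theorem~\ref{thm:Basic-Theorem} already produces a minimal complex.

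In the third paragraph the strategy is right --- to produce $\Delta^{(2)}$ you must indeed factor the inclusion $\xi\hookrightarrow\Sym^g U\oo\OO_{\PP^1}$ through $\Sym^{g-2}U\oo\Sym^2 U\oo\OO_{\PP^1}$ and then multiply --- but two details are off. First, the intermediate inclusion $(\bw^2 U)^{\oo 2}\oo\OO_{\PP^1}(-2)\hookrightarrow\Sym^2 U\oo\OO_{\PP^1}$ is \emph{not} (\ref{eq:koszul-P1}) with $a=2$: that sequence has the rank-two subsheaf $U\oo\bw^2 U\oo\OO_{\PP^1}(-1)=M_{\OO_{\PP^1}(2)}$ on the left, not a line bundle. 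The sequence you want is the jet-bundle sequence (\ref{eq:ses-eta}) in its $g=2$ incarnation, i.e.\ the composite $\iota_2\circ\bigl(\op{id}_{\bw^2 U}\oo\iota_1(-1)\bigr)$. Second, there is no connecting homomorphism involved: one simply applies the functor $H^1$ to the sheaf map $\OO_{\PP^1}(-2i-2)\to\Sym^2 U\oo\OO_{\PP^1}(-2i)$ (after stripping off the $\bw^{i+1}\Sym^{g-2}U$ and $\bw^2 U$ tensor factors), and the identification with $\Delta^{(2)}$ falls out of dualizing the multiplication $\Sym^{2i}(U^{\vee})\oo\Sym^2(U^{\vee})\to\Sym^{2i+2}(U^{\vee})$ and invoking (\ref{eq:(i)-i}); this is exactly why the binomial coefficients $\binom{2}{u}$ appear. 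Finally, the explicit Hermite isomorphism $\psi_d^i$ is not actually used in this proof --- only the Schur basis (\ref{eq:def-sll}) of $\bw^{i+1}\Sym^{g-2}U$ and the Koszul formula (\ref{eq:dkos-slam}) in that basis, as in the paper's citation to (\ref{eq:multiplication}), (\ref{eq:co-multiplication}), (\ref{eq:comm-diag-co-multiplication}), (\ref{eq:dkos-slam}); Hermite reciprocity proper enters later, in Propositions~\ref{prop:map-J-to-K} and~\ref{prop:map-F-to-J}.
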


\begin{rmk} In  the language of Koszul cohomology, Proposition \ref{prop:resolution-Rbar} yields the $\mathfrak{sl}_2$-identification
$$K_{i,1}\bigl(\mc{T}, \tau_*\OO_{\PP(\mc{J})}, \OO_{\mc{T}}(1)\bigr)\cong D^{2i} U\otimes \bigwedge^{i+1} \mbox{Sym}^{g-2} U, \mbox{ for } i=1, \ldots, g-2.$$
\end{rmk}
\begin{proof}
We let $\xi = \bigl(\bw^2 U\bigr)^{\oo 2} \oo \Sym^{g-2}U \oo \OO_{\PP^1}(-2)$, $V = \Sym^g U$ and $\eta = \mc{J}$, so that (\ref{eq:ses-eta}) is a special instance of (\ref{eq:ses-xi-V-eta}). We let $\mc{V} = \OO_{\PP^1}$ and apply Theorem~\ref{thm:Basic-Theorem} to construct a complex $F_{\bullet}$ with
\begin{equation}\label{eq:res-tildeR}
 F_i = \bigoplus_{j\geq 0} H^j\Bigl(\PP^1,\bw^{i+j}\xi\Bigr) \oo S(-i-j).
\end{equation}
It follows from (\ref{eq:eta-split}) and Section~\ref{subsec:cohom-P1} that $\mc{M}(\mc{V}) = \mc{O}_Z$ has no higher cohomology, so $F_{\bullet}$ is a minimal free resolution of $\tl{R}$. We have that $\bw^{i+j}\xi = \bigl(\bw^2 U\bigr)^{\oo (2i+2j)} \oo \bw^{i+j}\Sym^{g-2}U \oo \OO_{\PP^1}(-2i-2j) $ and therefore
\begin{equation}\label{eq:coh-wedge-xi}
H^j\Bigl(\PP^1,\bw^{i+j}\xi\Bigr) = \begin{cases}
 \kk & \mbox{if }i=j=0; \\
 \bw^2 U \oo \D^{2i}U \oo \bw^{i+1}\Sym^{g-2}U & \mbox{if }1\leq i\leq g-2\mbox{ and }j=1; \\
 0 & \mbox{otherwise}.
\end{cases}
\end{equation}
Indeed, if $i+j=0$ then $\bw^{i+j}\xi=\mc{O}_{\PP^1}$, whose only non-vanishing cohomology is $H^0(\PP^1,\mc{O}_{\PP^1})=\kk$. Since $\rk(\xi)=g-1$, we may assume that $0<i+j\leq g-1$, and since $\OO_{\PP^1}(-2i-2j)$ has no global sections, we may assume that $j=1$. It follows from Section~\ref{subsec:cohom-P1} that
\[H^1\bigl(\PP^1,\bw^{i+1}\xi\bigr) = \bigl(\bw^2 U\bigr)^{\oo (2i+2)} \oo \bw^{i+1}\Sym^{g-2}U \oo \bw^2 U^{\vee} \oo \D^{2i}(U^{\vee}).\]
Using $\bw^2 U \oo \bw^2 U^{\vee} \cong \kk$ and $\bigl(\bw^2 U\bigr)^{\oo 2i} \oo \D^{2i}(U^{\vee}) \cong \D^{2i}U$, we get (\ref{eq:coh-wedge-xi}).

Since we are only interested in the $\sl_2$-structure, we have that $\bw^2 U \cong \kk$, so (\ref{eq:res-tildeR}) and (\ref{eq:coh-wedge-xi}) give the desired formula for the terms in the resolution $F_{\bullet}$. The formula (\ref{eq:def-delF}) follows by combining (\ref{eq:multiplication}), (\ref{eq:co-multiplication}), (\ref{eq:comm-diag-co-multiplication}) and (\ref{eq:dkos-slam}), and recalling that $x^{(u)}\in\Sym^2 U$ should be interpreted as ${2\choose u}\cdot x^u$ for $0\leq u\leq 2$.
\end{proof}

Since $R$ is a quotient of $S$ and a subring of $\tl{R}$, it is isomorphic to $S/I$, where $I$ is the image of the quadratic part of the differential $\pd^F_1:F_1\to F_0$. It is not clear what this is from the construction above, but the construction tells us that $I$ is generated by quadrics. In characteristic $2$, it is easy to identify a subcomplex of $F_{\bullet}$ which gives a resolution of $R$, which we do in Section~\ref{subsec:char2}. In the general case we need one more auxiliary construction. 

\subsection{The minimal resolution of the canonical module $C$ of the affine cone $\widehat{\Gamma}$}
\label{sec:resln-C}

We construct a minimal resolution of $C$ as the minimization of a mapping cone. We show that it admits an explicit map from the resolution of $\tl{R}$, inducing a surjection $\tl{R}\onto  C$. In Section~\ref{subsec:char-not-2} we show that this map gives rise to an isomorphism $C\cong\widetilde{R}/R$ provided that $\mbox{char}(\kk)\neq 2$.

\vskip 4pt

Recall from (\ref{lazb}) and (\ref{eq:Mbun-Oa}) that $M_{\OO_{\PP^1}(g)}= \Sym^{g-1}U \oo \bw^2 U \oo \OO_{\PP^1}(-1)$. We define, in analogy with (\ref{eq:def-mcS-Z-X})
\begin{equation}\label{eq:def-mcS'-Z'-X}
\mc{S}' := \Sym_{\OO_{\PP^1}}\bigl(\OO_{\PP^1}(g)\bigr) \ \mbox{ and } Z' := \relSpec_{\PP^1}(\mc{S}') \lhra \D^g(U^{\vee}) \times \PP^1 = X,
\end{equation}
and we consider the canonical module of $\widehat{\Gamma}$, given as
\begin{equation}\label{eq:defC}
C := H^0(Z',\mc{O}_{Z'}(g-2))=\bigoplus_{n\geq 0} H^0\bigl(\Gamma, \omega_{\Gamma}\oo \OO_{\Gamma}(n)\bigr).
\end{equation}
We note that $C$ is generated in degree one by $\Sym^{g-2}(U)$. We let $C_{\bullet}$ denote the minimal free resolution of $C$, which is dual to that of the homogeneous coordinate ring of $\Gamma$. Using \cite[Corollary~6.2]{E04} it follows that for $i=0,\ldots,g-2$, the free module $C_i$ is generated in degree $i+1$, and $C_{g-1} \cong S(-g-1)$. Our goal is to realize $C_{\bullet}$ as the minimization of a mapping cone. To that end, we start with the following.

\begin{proposition}\label{prop:J-complex}
 There exists an $\mf{sl}_2$-equivariant complex of free $S$-modules
\[J_{\bullet}:\qquad 0\lra J_g \lra \cdots \lra J_1 \lra J_0,\ \mbox{ where}\]
\[J_i = \Bigl(\D^{i}U \oo \bw^{i}\Sym^{g-1}U\Bigr)\oo S(-i), \mbox{ for }i=0,\ldots,g,\]
and whose homology is given by $H_0(J_{\bullet}) = \kk \cong S/\mf{m},\ H_1(J_{\bullet}) = C$ and $H_i(J_{\bullet}) = 0$,  for $i>1$.
Moreover, the differential $\pd^J_i : J_i \lra J_{i-1}$ is defined at the level of generators via an $\sl_2$-equivariant map
\[ \D^{i}U \oo \bw^{i}\Sym^{g-1}U \lra \Bigl(\D^{i-1}U \oo \bw^{i-1}\Sym^{g-1}U \Bigr) \oo \Sym^g U\]
 induced by
 \[ \Delta:\D^{i}U \overset{(\ref{eq:co-multiplication})}{\lra} \D^{i-1}U \oo U,\quad k_i^{g-1}:\bw^{i}\Sym^{g-1}U \overset{(\ref{eq:dkos-slam})}{\lra} \bw^{i-1}\Sym^{g-1}U \oo \Sym^{g-1}U,\]
 \[\mbox{ and }\mu:U \oo \Sym^{g-1}U \overset{(\ref{eq:multiplication})}{\lra} \Sym^g U.\]
 More precisely, for all $0\leq t\leq i$ and $\ll\in\mc{P}_i(g-i)$, we have
 \begin{equation}\label{eq:def-delJ}
  \pd^J_i(x^{(t)} \oo s_{\ll}(x)) = \sum_{\substack{u+s = t \\ u\leq 1,\ s\leq i-1}} \sum_{j=1}^{i} (-1)^{j-1} \cdot x^{(s)} \oo s_{\ll^{\hat{j}}}(x) \oo x^{\ll_j + i - j + u}.
 \end{equation}
\end{proposition}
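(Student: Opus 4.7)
The plan is to realize $J_\bullet$, up to a homological shift by one, as the Kempf--Weyman complex $F(\mc{V})_\bullet$ of Theorem~\ref{thm:Basic-Theorem}, applied to the short exact sequence
\[0 \lra \xi \lra \Sym^g U \oo \OO_{\PP^1} \lra \OO_{\PP^1}(g) \lra 0,\]
where $\xi = M_{\OO_{\PP^1}(g)} = \Sym^{g-1}U \oo \bw^2 U \oo \OO_{\PP^1}(-1)$ by (\ref{eq:Mbun-Oa}), with auxiliary sheaf $\mc{V} = \OO_{\PP^1}(-2)$. In the language of Section~\ref{subsec:Kempf-Weyman}, this means $\eta = \OO_{\PP^1}(g)$, so the total space $Z$ coincides with $Z'$ from (\ref{eq:def-mcS'-Z'-X}) and $\mc{M}(\mc{V}) = \rho^*\OO_{\PP^1}(-2)$.

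First I will compute the terms of $F(\mc{V})_\bullet$. Writing $\bw^{i+j}\xi \oo \mc{V} = \bw^{i+j}\Sym^{g-1}U \oo (\bw^2 U)^{\oo(i+j)} \oo \OO_{\PP^1}(-i-j-2)$, Section~\ref{subsec:cohom-P1} gives $H^0 = 0$ and, after invoking $\D^d(U^\vee) \cong (\bw^2 U^\vee)^{\oo d} \oo \D^d U$ from Section~\ref{subsec:Sym-D},
\[H^1\bigl(\PP^1,\bw^{i+j}\xi\oo\mc{V}\bigr) \cong \bw^{i+j}\Sym^{g-1}U \oo \D^{i+j}U \oo \bw^2 U^\vee.\]
Only $j=1$ contributes, so $F(\mc{V})_i = \D^{i+1}U \oo \bw^{i+1}\Sym^{g-1}U \oo \bw^2 U^\vee \oo S(-i-1)$ for $-1 \leq i \leq g-1$, with $F(\mc{V})_{-1} \cong \bw^2 U^\vee \oo S$ in particular (coming from $H^1(\PP^1, \OO_{\PP^1}(-2))$). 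Setting $J_\bullet := F(\mc{V})_{\bullet - 1}$ and discarding the trivial $\sl_2$-factor $\bw^2 U^\vee$, I recover exactly the terms of the statement: $J_0 = S$ and $J_i = \D^i U \oo \bw^i\Sym^{g-1}U \oo S(-i)$ for $1\leq i\leq g$.

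Next I will compute the homology using Theorem~\ref{thm:Basic-Theorem} together with the identification $\rho_*\OO_{Z'} = \bigoplus_n \OO_{\PP^1}(gn)$:
\[H^i\bigl(Z',\mc{M}(\mc{V})\bigr) = \bigoplus_{n\geq 0} H^i\bigl(\PP^1,\OO_{\PP^1}(gn-2)\bigr).\]
For $i = 0$ this is exactly $C$ by (\ref{eq:defC}); for $i = 1$ only $n = 0$ contributes, since $g\geq 3$ forces $gn-2\geq 1$ when $n\geq 1$, yielding $\bw^2 U^\vee\cong \kk$ as an $\sl_2$-representation; and higher $i$ vanish on $\PP^1$. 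Shifting by one produces $H_0(J_\bullet) = \kk$, $H_1(J_\bullet) = C$, and $H_i(J_\bullet) = 0$ for $i>1$, as claimed.

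Finally, I will verify the explicit differential formula (\ref{eq:def-delJ}) by retracing the Kempf--Weyman recipe, in direct analogy with the proof of Proposition~\ref{prop:resolution-Rbar}. The linear part of $\pd^J_i$, at the level of minimal generators, is the composition of three natural $\sl_2$-equivariant ingredients: the Koszul differential $k_i^{g-1}$ of (\ref{eq:dkos-slam}) acting on the exterior factor $\bw^i\Sym^{g-1}U$; the comultiplication $\Delta$ of (\ref{eq:co-multiplication}) acting on $\D^i U$, produced by the coboundary of the tautological sequence (\ref{eq:tautological-seq}) under the identification $H^1(\PP^1,\OO_{\PP^1}(-d-2))\cong \D^d U \oo \bw^2 U^\vee$; and the multiplication $\mu:U\oo\Sym^{g-1}U\to\Sym^g U$ of (\ref{eq:multiplication}), coming from the defining surjection $\Sym^g U\oo\OO_{\PP^1}\onto\OO_{\PP^1}(g)$. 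Assembling these three maps with the Koszul sign conventions produces precisely (\ref{eq:def-delJ}). The hard part will be bookkeeping the identifications and signs introduced by the hypercohomology spectral sequence, especially near the boundary term $F(\mc{V})_{-1}$ whose contribution stems from $H^1$ of the untwisted sheaf rather than from a higher twist of $\xi$.
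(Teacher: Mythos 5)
Your proposal follows exactly the same strategy as the paper's proof: apply the Kempf--Weyman construction (Theorem~\ref{thm:Basic-Theorem}) to $\xi = M_{\OO_{\PP^1}(g)}$, $\eta = \OO_{\PP^1}(g)$, $\mc{V} = \OO_{\PP^1}(-2)$, shift the resulting complex by one, identify the terms via Section~\ref{subsec:cohom-P1}, and read off the homology from $H^0(Z',\mc{M}(\mc{V})) = C$ and $H^1(Z',\mc{M}(\mc{V})) = \kk$. The description of the differential as the composition of $\Delta$, $k_i^{g-1}$, and $\mu$ also matches the paper, which is similarly terse about the final sign and identification bookkeeping.
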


\begin{proof}
We use (\ref{eq:ses-xi-V-eta}) with $\xi = M_{\OO_{\PP^1}(g)}$, $V = \Sym^g U$, $\eta = \OO_{\PP^1}(g)$, and let $\mc{V}=\OO_{\PP^1}(-2)$. It follows that $\mc{M}(\mc{V}) = \mc{O}_{Z'}(-2)$, whose only non-zero cohomology groups are
\[H^0(X,\mc{O}_{Z'}(-2)) = C \mbox{  and  }H^1(X,\mc{O}_{Z'}(-2)) = H^1(\PP^1,\OO_{\PP^1}(-2)) = \bw^2 U^{\vee} \cong \kk,\]
where $\kk$ is a graded $S$-module concentrated in degree $0$. Theorem~\ref{thm:Basic-Theorem} yields a minimal complex $G_{\bullet}$ with
\[ G_i = \bigoplus_{j\geq 0} H^j\Bigl(\PP^1, \OO_{\PP^1}(-2)\oo\bw^{i+j} M_{\OO_{\PP^1}(g)} \Bigr) \oo S(-i-j),\]
whose non-zero homology is given by $ H_0(G_{\bullet}) = C$ and $H_{-1}(G_{\bullet}) = \kk.$ Note that
\[H^j\Bigl(\PP^1,\OO_{\PP^1}(-2)\oo\bw^{i+j} M_{\OO_{\PP^1}(g)}\Bigr) = \begin{cases}
 \D^{i+1}U \oo \bw^{i+1}\Sym^{g-1}U & \mbox{if }-1\leq i\leq g-1\mbox{ and }j=1; \\
 0 & \mbox{otherwise}.
\end{cases}\]
In the above calculation we have used the description of cohomology from Section~\ref{subsec:cohom-P1} and the isomorphism $\bigl(\bw^2 U\bigr)^{\oo i+1} \oo \D^{i+1}(U^{\vee}) \cong \D^{i+1}(U)$. Letting $J_{\bullet} = G_{\bullet+1}$ we get a complex $J_{\bullet}$ with the desired properties. We get (\ref{eq:def-delJ}) by combining (\ref{eq:multiplication}), (\ref{eq:co-multiplication}) and (\ref{eq:dkos-slam}), and noting that $x^{(u)}= x^u\in U$ for $u=0,1$.
\end{proof}

\noindent To build a mapping cone resolution for $C$, we need a map of complexes between $J_{\bullet}$ and the Koszul complex resolving the residue field $\kk$. We achieve this as follows.

\begin{proposition}\label{prop:map-J-to-K}
 Consider the Koszul complex resolving the residue field $\kk$ as an $S$-module
 \[K_{\bullet}:\qquad 0\lra K_{g+1} \lra \cdots \lra K_1\lra K_0 \left[ \lra \kk \lra 0\right],\]
 where
 \[K_i = \Bigl(\bw^i\Sym^g U\Bigr)\oo S(-i), \mbox{ for }i=0,\ldots,g+1,\]
and the differential given by the maps $k_i^g$ in (\ref{eq:dkos-slam}). Using the notation (\ref{eq:nud-formula}) and letting $\ol{p}_i = \nu_{g-i}^i$, the maps
 \[\ol{p}_i : \D^i U \oo \bw^i\Sym^{g-1}U \lra \bw^i\Sym^g U\]
 induce a map of complexes $p_{\bullet}:J_{\bullet}\lra K_{\bullet}$, with $\ol{p}_i = p_i \oo \op{id}_\kk$.
\end{proposition}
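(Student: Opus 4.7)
The plan is to verify that $p_\bullet$ commutes with differentials by reducing the identity $\pd^K_i\circ p_i=p_{i-1}\circ\pd^J_i$ to a statement about constant maps of $\kk$-vector spaces. Both complexes $J_\bullet$ and $K_\bullet$ are built minimally from the generator spaces $\ol{J}_i=\D^i U\oo\bw^i\Sym^{g-1}U$ and $\ol{K}_i=\bw^i\Sym^g U$, and the differentials $\pd^J_i,\pd^K_i$ as well as the candidate maps $p_i$ are obtained by tensoring $\sl_2$-equivariant linear maps of generator spaces with $S$ (matrices with constant entries). By the flatness of $S$ over $\kk$, it suffices to check the single commutativity
$$k_i^g\circ\ol{p}_i=(\ol{p}_{i-1}\oo\op{id}_{\Sym^g U})\circ\pd^J_i$$
as maps $\D^i U\oo\bw^i\Sym^{g-1}U\lra\bw^{i-1}\Sym^g U\oo\Sym^g U$.

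For the verification, I would evaluate both sides on a basis vector $x^{(t)}\oo s_{\ll}(x)$, with $\ll\in\mc{P}_i(g-i)$ and $0\leq t\leq i$. Using (\ref{eq:nud-formula}) and (\ref{eq:dkos-slam}), the left-hand side becomes a double sum indexed by $j\in[i]$ and $I\in{[i]\choose t}$. The key combinatorial input is the pair of identities
$$(\ll+(1^I))^{\hat j}=\ll^{\hat j}+(1^{I\setminus\{j\}})\quad\textrm{and}\quad(\ll+(1^I))_j=\ll_j+\mathbf{1}_{j\in I},$$
where the first uses the order-preserving identification $[i]\setminus\{j\}\cong[i-1]$. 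Splitting the inner sum over $I$ according to whether $j\in I$ separates the expression into two pieces corresponding to $|I\setminus\{j\}|=t-1$ and $|I\setminus\{j\}|=t$. Repackaging each piece through (\ref{eq:nud-formula}) in the form $\sum_{I''\in{[i-1]\choose s}}s_{\ll^{\hat j}+(1^{I''})}(x)=\ol{p}_{i-1}(x^{(s)}\oo s_{\ll^{\hat j}}(x))$, these two pieces match exactly the two terms $(u,s)=(1,t-1)$ and $(u,s)=(0,t)$ of the explicit formula (\ref{eq:def-delJ}) for $\pd^J_i$, after applying $\ol{p}_{i-1}\oo\op{id}_{\Sym^g U}$.

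The main obstacle is purely bookkeeping: the boundary values $t=0$ and $t=i$, as well as cases where $\ll+(1^I)$ or $\ll^{\hat j}+(1^{I''})$ fails to be a genuine element of $\mc{P}_i$ (respectively $\mc{P}_{i-1}$), must be handled consistently. These vanish on the left-hand side by the antisymmetry convention extending (\ref{eq:def-sll}) to arbitrary $\ll\in\bb{Z}^i_{\geq 0}$ (two equal wedge factors give zero), and on the right-hand side by the range constraints $u\leq 1$, $s\leq i-1$ built into (\ref{eq:def-delJ}) (reflecting that $\D^1 U=U$ has basis $x^{(0)},x^{(1)}$ and $\D^{i-1}U$ has basis $x^{(0)},\ldots,x^{(i-1)}$). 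Once this indexing discipline is in place, the argument reduces to the two short identities displayed above, and no further algebraic input is required beyond Pieri's rule (implicit in the very definition of $\nu_d^i$) and the comultiplication $\Delta$ on $\D^i U$.
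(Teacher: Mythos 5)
Your proposal is correct and follows the same route as the paper's proof: evaluate both $k_i^g\circ\ol{p}_i$ and $(\ol{p}_{i-1}\oo\op{id})\circ\pd^J_i$ on a basis vector $x^{(t)}\oo s_\ll(x)$, split the inner sum over $I\in{[i]\choose t}$ by whether $j\in I$, and use the order-preserving reindexing $[i]\setminus\{j\}\cong[i-1]$ — which is exactly the paper's bijections (\ref{eq:I=I'}) and (\ref{eq:I=I'+j}) — to establish the identities (\ref{eq:one-and-two}). Your two displayed combinatorial identities and the discussion of the boundary cases $t=0,i$ and out-of-range partitions reproduce the paper's argument essentially verbatim.
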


\begin{proof}
 We have to verify that for each $i = 1,\ldots,g+1$ we have a commutative diagram
 \[
 \xymatrix{
 \D^i U \oo \bw^i\Sym^{g-1}U \ar[d]_{\ol{p}_i} \ar[rrr]^{\pd^J_i} & & & \D^{i-1}U \oo \bw^{i-1}\Sym^{g-1}U \oo \Sym^g U \ar[d]^{\ol{p}_{i-1} \oo \id_{\Sym^g U}} \\
 \bw^i\Sym^g U \ar[rrr]_{\pd^K_i} & & & \bw^{i-1}\Sym^g U \oo \Sym^g U
 }
 \]
 The space $\D^i U \oo \bw^i\Sym^{g-1} U$ has a basis consisting of elements of the form $x^{(t)} \oo s_{\ll}(x)$,  where
 $0\leq t\leq i$ and $\ll\in\mc{P}_i(g-i)$.
 It follows from (\ref{eq:def-delJ}) that
 \begin{equation}\label{eq:top-row-dJ}
 \begin{aligned}
 \pd^J_i (x^{(t)} \oo s_{\ll}(x)) & = \Bigl(\sum_{j=1}^i (-1)^{j-1} x^{(t)} \oo s_{\ll^{\hat{j}}}(x) \oo x^{\ll_j+i-j}\Bigr)+ \Bigl(\sum_{j=1}^i (-1)^{j-1} x^{(t-1)} \oo s_{\ll^{\hat{j}}}(x) \oo x^{\ll_j+i-j+1}\Bigr), \\
 \end{aligned}
 \end{equation}
where the second term vanishes identically when $t=0$, while the first one disappears when $t=i$.
Using (\ref{eq:nud-formula}) and the fact that $\pd^K_i = k_i^g$ as defined in (\ref{eq:dkos-slam}) we get
 \begin{equation}\label{eq:dJ-circ-pi}
 (\pd^K_i \circ \ol{p}_i)(x^{(t)} \oo s_{\ll}(x)) = \sum_{j=1}^i (-1)^{j-1}\cdot\Bigl(\sum_{I \in {[i]\choose t}} s_{(\ll+(1^I))^{\hat{j}}}(x) \oo x^{(\ll+(1^I))_j+i-j}\Bigr).
\end{equation}
Comparing (\ref{eq:top-row-dJ}) with (\ref{eq:dJ-circ-pi}), it follows that in order to prove the commutativity of the diagram it suffices to check that for fixed $j\in\{1,\ldots,i\}$ we have
\begin{equation}\label{eq:one-and-two}
\begin{aligned}
\ol{p}_{i-1}\bigl(x^{(t)} \oo s_{\ll^{\hat{j}}}(x)\bigr) &= \sum_{j\notin I \in {[i] \choose t}} s_{(\ll+(1^I))^{\hat{j}}}(x)\mbox{ and } \\
\ol{p}_{i-1}\bigl(x^{(t-1)} \oo s_{\ll^{\hat{j}}}(x)\bigr) &= \sum_{j\in I \in {[i] \choose t}} s_{(\ll+(1^I))^{\hat{j}}}(x).
\end{aligned}
\end{equation}
Note that $\ol{p}_{i-1}\bigl(x^{(t)} \oo s_{\ll^{\hat{j}}}(x)\bigr)  = \sum_{I' \in {[i-1]\choose t}} s_{\ll^{\hat{j}} + (1^{I'})}(x)$
and that the correspondence
\begin{equation}\label{eq:I=I'}
I' \lra I = \{ k : k \in I' \mbox{ and }k < j\} \cup \{ k+1 : k\in I' \mbox{ and }k\geq j\}
\end{equation}
establishes a bijection between the collection of sets $I'\in {[i-1]\choose t}$ and that of sets $I \in {[i] \choose t}$ with $j\notin I$, which has the property that $(\ll+(1^I))^{\hat{j}} = \ll^{\hat{j}} + (1^{I'})$. This proves the first equality in (\ref{eq:one-and-two}). For the second equality we use
\[\ol{p}_{i-1}(x^{(t-1)} \oo s_{\ll^{\hat{j}}}(x))  = \sum_{I' \in {[i-1]\choose t-1}} s_{\ll^{\hat{j}} + (1^{I'})}(x)\]
and note the bijective correspondence between sets $I'\in {[i-1]\choose t}$ and $I \in {[i] \choose t}$ with $j\in I$, given by
\begin{equation}\label{eq:I=I'+j}
 I' \lra I = \{ k : k \in I' \mbox{ and }k < j\} \cup \{j\} \cup \{ k+1 : k\in I' \mbox{ and }k\geq j\}.
\end{equation}
Moreover, this correspondence has the property that $(\ll+(1^I))^{\hat{j}} = \ll^{\hat{j}} + (1^{I'})$, which shows the second equality in (\ref{eq:one-and-two}) and concludes our proof.
\end{proof}

\begin{corollary}
\label{cor:C_i=ker-pi+1}
 Let $p_{\bullet}:J_{\bullet}\lra K_{\bullet}$ be the map of complexes constructed in Proposition~\ref{prop:map-J-to-K}, and let $C_{\bullet}$ denote the minimal resolution of the module $C$ in (\ref{eq:defC}). We have
 \begin{equation}\label{eq:Ci=kerpi+1}
 C_i = \ker(p_{i+1}), \mbox{ for  }i=0,\ldots,g-2.
\end{equation}
\end{corollary}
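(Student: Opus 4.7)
The plan is to realize $C$ as the homology of a shifted mapping cone of $p_{\bullet}$, minimize that mapping cone, and then verify that the cokernel terms vanish via Hermite reciprocity. First, I would form $\mathrm{Cone}(p_{\bullet})$ and analyze its homology using the standard long exact sequence
\[\cdots \lra H_i(J_{\bullet}) \overset{H_i(p)}{\lra} H_i(K_{\bullet}) \lra H_i\bigl(\mathrm{Cone}(p_{\bullet})\bigr) \lra H_{i-1}(J_{\bullet}) \lra \cdots\]
By Proposition~\ref{prop:J-complex}, the only non-zero homology groups of $J_{\bullet}$ are $H_0(J_{\bullet})=\kk$ and $H_1(J_{\bullet})=C$, while $K_{\bullet}$ is the Koszul resolution of $\kk$. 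A direct inspection of formula~(\ref{eq:nud-formula}) shows that $\bar p_0 = \nu_g^0$ is the canonical identification $\kk \to \kk$, so $H_0(p)$ is an isomorphism and the long exact sequence collapses to give $H_2\bigl(\mathrm{Cone}(p_{\bullet})\bigr) = C$ as the unique non-zero homology group. Consequently, the shift of $\mathrm{Cone}(p_{\bullet})$ down by two homological degrees is a graded free $S$-resolution of $C$.

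Both $J_{\bullet}$ and $K_{\bullet}$ are minimal, and each $p_i$ has constant-entry matrix (being an $\sl_2$-equivariant vector-space map tensored with $S$), so Lemma~\ref{lem:minimization} applies. By uniqueness of minimal free resolutions, after the homological shift the resulting minimization must coincide with $C_{\bullet}$, which gives
\[C_i \cong \ker(p_{i+1}) \oplus \coker(p_{i+2}), \quad i \geq 0,\]
where $p_{g+1}$ is interpreted as the zero map since $J_{g+1}=0$. The corollary therefore reduces to showing that $\bar p_j$ is surjective for $2\leq j\leq g$.

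For this last step, I would invoke the explicit Hermite reciprocity developed in Section~\ref{subsec:Hermite}: the isomorphisms $\psi_{g-j}^j : \Sym^{g-j}(\D^j U) \xrightarrow{\sim} \bw^j \Sym^{g-1}U$ and $\psi_{g-j+1}^j : \Sym^{g-j+1}(\D^j U) \xrightarrow{\sim} \bw^j \Sym^g U$, together with the commutativity of the right square of diagram~(\ref{eq:sym-to-wedge}) applied with $d = g-j$ and $i = j$, identify $\bar p_j = \nu_{g-j}^j$ with the algebra multiplication
\[\Sym^{g-j}(\D^j U) \oo \D^j U \lra \Sym^{g-j+1}(\D^j U),\]
which is tautologically surjective. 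The main obstacle I foresee is precisely this recognition step: directly from formula~(\ref{eq:nud-formula}), the source and target of $\bar p_j$ are spaces of quite different tensor types, so surjectivity is not transparent. Hermite reciprocity is exactly what reduces it to a triviality about symmetric algebras; the rest is bookkeeping with the long exact sequence and Lemma~\ref{lem:minimization}.
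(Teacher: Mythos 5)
Your proposal is correct and follows the paper's strategy for the first half (mapping cone, long exact sequence, Lemma~\ref{lem:minimization}, homological shift by two), but diverges genuinely in how it disposes of the cokernel terms. The paper appeals to the already-known degree structure of $C_{\bullet}$ — via \cite[Corollary~6.2]{E04}, each $C_i$ is free with generators concentrated in degree $i+1$, whereas $\coker(p_{i+2})$ would contribute generators in degree $i+2$, forcing it to vanish. You instead prove surjectivity of $\ol{p}_j$ directly by transporting it through Hermite reciprocity to the multiplication map $\Sym^{g-j}(\D^j U)\oo\D^j U\to\Sym^{g-j+1}(\D^j U)$, which is tautologically onto. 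Both arguments are valid; yours has the merit of being self-contained (no appeal to the external Eagon--Northcott/duality description of $C_{\bullet}$) and makes explicit a fact that the paper uses later anyway, in the proof of Corollary~\ref{cor:F-to-C} via the bottom square of diagram~(\ref{eq:big-sym-to-wedge}), while the paper's degree count is shorter. Two small imprecisions worth fixing: the identification of $\nu_{g-j}^j$ with the multiplication uses the full commutative rectangle of~(\ref{eq:sym-to-wedge}), i.e.\ both the $\eps$-square and the $\s$-square, not just the right one; and your reduction to ``$\ol{p}_j$ surjective for $2\le j\le g$'' silently presupposes $M_0=M_1=0$, which needs $p_0$ an isomorphism and either $p_1$ surjective (your Hermite argument covers $j=1$ too, so just include it) or a Nakayama-type remark that a bounded minimal complex with vanishing $H_0$ and $H_1$ must have $M_0=M_1=0$.
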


\begin{proof}
Since $H_0(p_{\bullet}) : H_0(J_{\bullet}) \lra H_0(K_{\bullet})$ is an isomorphism, it follows that by applying the mapping cone construction to the map of complexes $p_{\bullet}:J_{\bullet}\lra K_{\bullet}$ and minimizing the resulting complex as in Lemma~\ref{lem:minimization}, we obtain a minimal complex $M_{\bullet}$ whose terms are
\[M_i = \ker(p_{i-1}) \oplus \coker(p_i),\mbox{ for }i=0,\ldots,g+1,\]
and whose only non-zero homology is $H_2(M_{\bullet}) = H_1(J_{\bullet}) = C$. Since the map $p_0$ is an isomorphism and $p_1$ is surjective, we get that $M_0=M_1=0$ so $M_{\bullet+2}$ is in fact a minimal resolution of $C$, from which we deduce that $M_{i+2} \cong C_{i}$, for  $i=0,\ldots,g-1$.

\vskip 4pt

We have that $C_i$ is a free module with generators of degree $i+1$, so the same must be true about $M_{i+2}$. Since $\ker(p_{i+1})$ is a free module generated in degree $i+1$ and $\coker(p_{i+2})$ is a free module generated in degree $i+2$, it follows that $\coker(p_{i+2})=0$ for all $i=0,\ldots,g-2$ and therefore $p_i$ is surjective for all $i=0,\ldots,g$. It follows that $C_i = M_{i+2} = \ker(p_{i+1})$, for  $i=0,\ldots,g-2$.
\end{proof}

\begin{rmk}
Keeping the notation above, we have an isomorphism $C_{g-1}\cong M_{g+1}$ of free modules of rank one generated in degree $g+1$, and we conclude that $\ker(p_g) = 0$ and $M_{g+1}=\coker(p_{g+1}) = K_{g+1}$.
\end{rmk}

\subsection{The first linear strand of the resolution of $R$}
\label{sec:resln-R}

The goal of this section is to complete the proof of Theorem~\ref{cor=kw}. Our strategy is summarized in the following steps:

\vskip 3pt

\noindent (1) We construct a map of complexes $\tl{q}_{\bullet}:F_{\bullet} \lra C_{\bullet}$ such that the induced map
 $$H_0(\tl{q}_{\bullet}):H_0(F_{\bullet}) \lra H_0(C_{\bullet})$$ is a surjection $\tl{R} \lra C$.

\noindent (2) If we let $R':=\mbox{Ker}(\widetilde{R}\rightarrow C)$, then $R\subseteq R'$ and the mapping cone of $\tl{q}_{\bullet}$ gives a resolution of $R'$. Minimizing it, we get that the terms in the first linear strand of the minimal resolution of $R'$ can be described as kernels of the Koszul maps (\ref{eq:ker-delta-2}).

\noindent (3) The quotient $R'/R$ is a module generated in degree at least $2$, so the first strands of the minimal resolutions of $R$ and $R'$ coincide.


\begin{proposition}\label{prop:map-F-to-J}
 For $i = 0,\ldots,g-2$, we consider the $\sl_2$-equivariant map
\[\ol{q}_i : \D^{2i}U \oo \bw^{i+1} \Sym^{g-2}U \lra \D^{i+1}U \oo \bw^{i+1}\Sym^{g-1}U\]
induced by the map $\Delta_1 : \D^{2i}U \lra \D^{i+1} U \oo \D^{i+1} U$
in Lemma~\ref{lem:mu1-delta1}, followed by
\[\nu^{i+1}_{g-2-i} : \D^{i+1}U \oo \bw^{i+1}\Sym^{g-2}U \overset{(\ref{eq:nud-formula})}{\lra}  \bw^{i+1}\Sym^{g-1}U.\]
The maps $\ol{q}_i$ define a map of complexes $q_{\bullet}:F_{\bullet} \lra J_{\bullet + 1}$ (if we take $q_0$ to send the summand $S\subseteq F_0$ to zero), such that the induced map $H_0(F_{\bullet}) \lra H_1(J_{\bullet})$ is a surjection $\tl{R} \lra C$ containing $R$ in the kernel.
\end{proposition}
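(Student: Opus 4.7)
The proposition asserts three things, which I would verify in order: (i) $q_\bullet$ is a chain map; (ii) the induced map $H_0(F_\bullet)\to H_1(J_\bullet)$, $\tl{R}\to C$, is surjective; and (iii) this map has $R$ in its kernel. Item (iii) is immediate from the stated convention: $q_0$ sends the summand $S\subseteq F_0$ to zero, so the induced $S$-linear map annihilates $1\in\tl{R}$, and by $S$-linearity it annihilates $R=S\cdot 1$.

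For (i), since the differentials of $F_\bullet$ and $J_\bullet$ have linear entries while each $q_i$ has constant entries, the chain map condition reduces to an identity of $\sl_2$-equivariant maps between finite-dimensional vector spaces:
\[
\D^{2i}U \oo \bw^{i+1}\Sym^{g-2}U \lra \D^i U \oo \bw^i\Sym^{g-1}U \oo \Sym^g U,
\]
for each $i\geq 1$, together with the bottom relation $\pd^J_1\circ q_0=0$. My approach is to use Hermite reciprocity (Lemma~\ref{lem:psi_d-equivariant}) to identify $\bw^{i+1}\Sym^{g-2}U \cong \Sym^{g-i-2}(\D^{i+1}U)$ and $\bw^i\Sym^{g-1}U \cong \Sym^{g-i-1}(\D^{i+1}U)$, turning both sides of the identity into concrete tensor products of divided and symmetric powers of $\D^{i+1}U$. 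Under this identification, the compatibility diagram (\ref{eq:comm-diag-co-multiplication}) relating $\Delta^{(2)}$ to iterated co-multiplication $\Delta$, combined with Pieri's rule (\ref{eq:Pieri}) describing how the $\nu$-maps interact with Koszul differentials, should yield the required equality. A hands-on alternative is to expand both compositions directly using the explicit formulas (\ref{eq:def-delF}), (\ref{eq:def-delJ}), (\ref{eq:delta1}), and (\ref{eq:nud-formula}), and match terms via a bijection of index sets.

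For (ii), since $C$ is generated in degree 1 by $C_1=H^0(\PP^1,\OO_{\PP^1}(g-2))=\Sym^{g-2}U$ and the induced map is $S$-linear, surjectivity reduces to degree 1. The degree-1 minimal generators of $\tl{R}$ form the summand $\Sym^{g-2}U\subseteq F_0/\mf{m}F_0$, corresponding via the jet sequence (\ref{eq:ses-eta}) to $H^0(\omega_{\PP^1}(g))\subseteq H^0(\mc{J})$. Because $J_2$ is generated in degree $\geq 2$, we have $(\Im\,\pd^J_2)_1=0$, so $C_1$ is literally $\ker(\pd^J_1)_1 = \ker\bigl(\mu\colon U\oo\Sym^{g-1}U\to\Sym^g U\bigr)$, of dimension $g-1$. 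A direct unpacking yields
\[
\ol{q}_0(x^\mu) = x\oo x^\mu - 1\oo x^{\mu+1}\in U\oo\Sym^{g-1}U, \qquad 0\leq \mu\leq g-2,
\]
which lies in $\ker(\mu)$ and is injective in $\mu$ by linear independence of $\{x^{\mu+1}\}$ in $\Sym^{g-1}U$; since source and target both have dimension $g-1$, this yields an isomorphism $\Sym^{g-2}U\xrightarrow{\,\sim\,}C_1$, completing (ii).

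The technically demanding step is (i): while both routes are conceptually clean, each requires careful bookkeeping of signs, index shifts, and combinatorial multiplicities. The essential content — that $\Delta_1$ together with Pieri multiplication intertwines the Koszul differentials on $F_\bullet$ and $J_\bullet$ — is where the main obstacle lies.
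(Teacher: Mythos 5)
Your treatment of items (ii) and (iii) is correct, and your explicit formula $\ol{q}_0(x^\mu)=x\oo x^\mu - 1\oo x^{\mu+1}$ together with the dimension count is a slightly more concrete version of what the paper does (the paper simply observes that $q_0$ carries the generators $\Sym^{g-2}U$ of $\tl R$ isomorphically to the generators $\Sym^{g-2}U$ of $C$, with $1\mapsto 0$). But this is the easy part of the proposition.

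The chain-map verification (i) is the entire technical content of the statement, and you have not carried it out — you yourself flag it as ``the main obstacle.'' Of the two routes you sketch, the paper actually executes the second (direct expansion): it computes $(\ol q_{i-1}\oo\id)\circ\pd^F_i$ and $\pd^J_{i+1}\circ\ol q_i$ via the explicit formulas (\ref{eq:def-delF}), (\ref{eq:def-delJ}), (\ref{eq:delta1}), (\ref{eq:nud-formula}), and matches the resulting sums by a three-way case analysis on $u\in\{0,1,2\}$ together with the index bijections (\ref{eq:I=I'}) and (\ref{eq:I=I'+j}); the $u=1$ case is genuinely delicate because two terms from the $J$-side combine to produce the factor of $2$ coming from $x^{(1)}=2x\in\Sym^2U$. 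Your first route (Hermite plus Pieri plus the compatibility diagram (\ref{eq:comm-diag-co-multiplication})) is vaguer and as stated contains an error: Hermite reciprocity gives $\bw^i\Sym^{g-1}U\cong\Sym^{g-i}(\D^iU)$, not $\Sym^{g-i-1}(\D^{i+1}U)$, so the translation you propose for the target does not match the stated isomorphism (\ref{eq:isom-psi_d}). More fundamentally, (\ref{eq:comm-diag-co-multiplication}) relates $\Delta^{(2)}$ to iterated $\Delta$, whereas the chain-map identity involves two \emph{different} instances of $\Delta_1$ on the two sides, namely $\D^{2i-2}U\to\D^iU\oo\D^iU$ and $\D^{2i}U\to\D^{i+1}U\oo\D^{i+1}U$; no compatibility diagram in the paper directly bridges these, and I do not see how your plan closes that gap without effectively redoing the paper's term-by-term bookkeeping. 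So the proposal leaves the crux of the proposition unproved.
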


\begin{proof}
The map $\Delta_1:\D^0U \lra \D^1 U\oo \D^1 U$ can be identified with the  $\sl_2$-equivariant inclusion of $\kk\cong \bw^2 U \lra U\oo U$, so the map $q_0:F_0\lra J_1$ is given by
 \[S \oplus \Sym^{g-2}U \oo S(-1) \overset{q_0}{\lra} (U \oo \Sym^{g-1}U) \oo S(-1),\]
 where $S$ is sent to $0$, and $\Sym^{g-2}U\cong \bw^2 U \oo \Sym^{g-2}U \lra U \oo \Sym^{g-1}U$ is an inclusion given by the Koszul differential. Since $\pd^J_1:J_1\lra J_0$ is induced by the multiplication map $U \oo \Sym^{g-1}U\lra \Sym^g U$, it follows that $\pd^J_1 \circ q_0 = 0$.
Assuming  $q_{\bullet}:F_{\bullet} \lra J_{\bullet + 1}$ is indeed a map of complexes (which we shall prove shortly), to see that $H_0(q_{\bullet})$ induces a surjection $\tl{R}\onto C$ containing $R$ in its kernel, it suffices to note that $q_0$ sends the generators $\Sym^{g-2}U\subseteq \tl{R}$ isomorphically onto the generators $\Sym^{g-2}U$ of $C$, and that the generator $1\in R$ is sent to $0$ since $q_0(S)=0$.

\vskip 4pt

 To prove that $q_{\bullet}:F_{\bullet} \lra J_{\bullet + 1}$ is a map of complexes we have to verify that for every $i \geq 1$ we get a commutative diagram (note that since $q_0(S)=0$, we can ignore the quadratic part of the differential $\pd^F_1$)
 \[
 \xymatrix@C=1.6em{
  \D^{2i}U \oo \bw^{i+1}\Sym^{g-2}U \ar[rr]^{\ol{q}_i} \ar[d]_{\pd^F_i} & & \D^{i+1}U \oo \bw^{i+1}\Sym^{g-1}U \ar[d]^{\pd^J_{i+1}} \\
  \left(\D^{2i-2}U \oo \bw^i\Sym^{g-2}U\right) \oo \Sym^g U \ar[rr]^{\ol{q}_{i-1} \oo \id} & & \left(\D^i U \oo \bw^i\Sym^{g-1}U\right) \oo \Sym^g U\\
 }
 \]
 The space $\D^{2i}U \oo \bw^{i+1}\Sym^{g-2}U$ has a basis consisting of elements $x^{(t)} \oo s_{\ll}(x)$ where $0\leq t\leq 2i$ and $\ll \in \mc{P}_{i+1}(g-i-2)$. Since
 \[\Delta_1(x^{(t)}) = \sum_{\substack{t'+t'' = t+1 \\ t',t'' \leq i+1}} (t' - t'') \cdot x^{(t')} \oo x^{(t'')}\]
it follows that
\[\ol{q}_i(x^{(t)} \oo s_{\ll}(x)) = \sum_{\substack{t'+t'' = t+1 \\  t',t'' \leq i+1}} (t' - t'') \cdot x^{(t')} \oo \Bigl(\sum_{I \in {[i+1] \choose t''}} s_{\ll + (1^I)}(x) \Bigr) \]
and using (\ref{eq:def-delJ}) we get that $(\pd^J_{i+1}\circ \ol{q}_i)(x^{(t)} \oo s_{\ll}(x))$ is computed by
\[\sum_{\substack{t'+t'' = t+1 \\ t',t'' \leq i+1}} \sum_{I \in {[i+1] \choose t''}}\sum_{\substack{u'+s' = t' \\ u'\leq 1,s'\leq i}} \sum_{j=1}^{i+1} (-1)^{j-1}\cdot (t' - t'') \cdot x^{(s')} \oo s_{(\ll + (1^I))^{\hat{j}}}(x) \oo x^{(\ll + (1^I))_j + i + 1 - j + u'} \]
\begin{equation}\label{eq:1st-comp}
=\sum_{\substack{u'+s'+t'' = t+1 \\ u'\leq 1, s'\leq i,t'' \leq i+1}} \sum_{I \in {[i+1] \choose t''}} \sum_{j=1}^{i+1} (-1)^{j-1}\cdot (u' + s' - t'') \cdot x^{(s')} \oo s_{(\ll + (1^I))^{\hat{j}}}(x) \oo x^{(\ll + (1^I))_j + i + 1 - j + u'}.
\end{equation}

On the other hand we have using (\ref{eq:def-delF}) that
\[ \pd^F_i(x^{(t)} \oo s_{\ll}(x)) = \sum_{\substack{u+s = t \\ u\leq 2,\ s\leq 2i-2}} \sum_{j=1}^{i+1} (-1)^{j-1} \cdot x^{(s)} \oo s_{\ll^{\hat{j}}}(x) \oo (x^{(u)} \cdot x^{\ll_j + i + 1 - j}),\]
where $x^{(u)} = x^u$ when $u=0, 2$, and $x^{(1)} = 2x$. Thus  $\bigl((\ol{q}_{i-1} \oo \id_{\Sym^g U}) \circ \pd^F_i\bigr)(x^{(t)} \oo s_{\ll}(x))$ is computed by
\[\sum_{\substack{u+s = t \\ u\leq 2,\ s\leq 2i-2}} \sum_{j=1}^{i+1} \sum_{\substack{s'+s'' = s+1 \\  s',s'' \leq i}} \sum_{I' \in {[i] \choose s''}} (-1)^{j-1}\cdot (s' - s'') \cdot x^{(s')} \oo s_{\ll^{\hat{j}}+(1^{I'})}(x) \oo (x^{(u)} \cdot x^{\ll_j + i + 1 - j})\]
\begin{equation}\label{eq:2nd-comp}
=\sum_{\substack{u+s'+s'' = t + 1 \\ u\leq 2,\ s',s''\leq i}} \sum_{j=1}^{i+1} \sum_{I' \in {[i] \choose s''}} (-1)^{j-1}\cdot (s' - s'') \cdot x^{(s')} \oo s_{\ll^{\hat{j}}+(1^{I'})}(x) \oo (x^{(u)} \cdot x^{\ll_j + i + 1 - j})
\end{equation}

\vskip 4pt

To conclude the proof we show that we can identify (\ref{eq:1st-comp}) with (\ref{eq:2nd-comp}). We fix $j$ and $s'$ with $1\leq j\leq i+1$ and $0\leq s' \leq i$. It is then enough to verify that
\begin{equation}\label{eq:LHS}
\sum_{\substack{u'+t'' = t+1-s' \\ u'\leq 1,t'' \leq i+1}} \sum_{I \in {[i+1] \choose t''}}  (u' + s' - t'') \cdot s_{(\ll + (1^I))^{\hat{j}}}(x) \oo x^{(\ll + (1^I))_j + i + 1 - j + u'}
\end{equation}
\[ = \sum_{\substack{u+s'' = t + 1 - s' \\ u\leq 2,s''\leq i}} \sum_{I' \in {[i] \choose s''}} (s' - s'') \cdot s_{\ll^{\hat{j}}+(1^{I'})}(x) \oo (x^{(u)} \cdot x^{\ll_j + i + 1 - j}).\]
We fix $u,s''$ with $u+s'' = t + 1 - s'$, $u\leq 2$, $s''\leq i$, and fix $I' \in {[i] \choose s''}$. We consider three separate cases:

\vskip 4pt

\ul{$u=0$}: We let $u'=0$, $t''=s''$, and define $I$ via (\ref{eq:I=I'}). The term in (\ref{eq:LHS}) corresponding to this choice of parameters is equal to $(s' - s'') \cdot s_{\ll^{\hat{j}}+(1^{I'})}(x) \oo (x^{(u)} \cdot x^{\ll_j + i + 1 - j})$ since $u'-t'' = -s''$, $(\ll + (1^I))^{\hat{j}} = \ll^{\hat{j}}+(1^{I'})$, $x^{(u)} = 1$ and $(\ll + (1^I))_j + i + 1 - j + u' = \ll_j + i + 1 - j$, where the last equality follows from the fact that $j\not\in I$ and $u'=0$.

\vskip 4pt

\ul{$u=2$}: We let $u'=1$, $t'' = s'' + 1$, and define $I$ via (\ref{eq:I=I'+j}). The term in (\ref{eq:LHS}) corresponding to this choice of parameters is equal to $(s' - s'') \cdot s_{\ll^{\hat{j}}+(1^{I'})}(x) \oo (x^{(u)} \cdot x^{\ll_j + i + 1 - j})$ since $u'-t'' = -s''$, $(\ll + (1^I))^{\hat{j}} = \ll^{\hat{j}}+(1^{I'})$, $x^{(u)} = x^2$ and $(\ll + (1^I))_j + i + 1 - j + u' = 2 + \ll_j + i + 1 - j$, where the last equality follows from the fact that $j\in I$ and $u'=1$.

\vskip 4pt

\ul{$u=1$}: In this case there are two terms in (\ref{eq:LHS}) whose sum is
\[(s' - s'') \cdot s_{\ll^{\hat{j}}+(1^{I'})}(x) \oo (x^{(u)} \cdot x^{\ll_j + i + 1 - j}) = 2\cdot(s' - s'') \cdot s_{\ll^{\hat{j}}+(1^{I'})}(x) \oo x^{\ll_j + i + 2 - j}\]
and they correspond to the following choice of parameters. If we let $u'=0$, $t'' = s''+1$, and define $I$ via (\ref{eq:I=I'+j}) then we get
\[(u' + s' - t'') \cdot s_{(\ll + (1^I))^{\hat{j}}}(x) \oo x^{(\ll + (1^I))_j + i + 1 - j + u'} = (s'-s''-1) \cdot s_{\ll^{\hat{j}}+(1^{I'})}(x) \oo x^{\ll_j + i + 2 - j},\]
while if we take $u'=1$, $t''=s''$, and define $I$ via (\ref{eq:I=I'}) then we get
\[(u' + s' - t'') \cdot s_{(\ll + (1^I))^{\hat{j}}}(x) \oo x^{(\ll + (1^I))_j + i + 1 - j + u'} = (1+s'-s'') \cdot s_{\ll^{\hat{j}}+(1^{I'})}(x) \oo x^{\ll_j + i + 2 - j}.\]

To finish the proof, we make sure that all the terms in (\ref{eq:LHS}) are accounted for in the above case analysis:
\begin{itemize}
 \item If $u'=0$ and $j\in I$ then take $u=1$, $s''=t''-1$ and define $I'$ by reversing (\ref{eq:I=I'+j}).
 \item If $u'=0$ and $j\not\in I$ then take $u=0$, $s''=t''$ and define $I'$ by reversing (\ref{eq:I=I'}).
 \item If $u'=1$ and $j\in I$ then take $u=2$, $s''=t''-1$ and define $I'$ by reversing (\ref{eq:I=I'+j}).
 \item If $u'=1$ and $j\not\in I$ then take $u=1$, $s''=t''$ and define $I'$ by reversing (\ref{eq:I=I'}).\qedhere
\end{itemize}
\end{proof}

The map $q_{\bullet}$ constructed in Proposition~\ref{prop:map-F-to-J} induces a map of complexes $F_{\bullet}\lra C_{\bullet}$ as follows

\begin{corollary}\label{cor:F-to-C}
 For $i=0,\ldots,g-2$ we have $p_{i+1} \circ q_i = 0$, therefore $q_i(F_i) \subseteq C_i = \ker(p_{i+1})$, by applying Corollary~\ref{cor:C_i=ker-pi+1}. The induced map of complexes $\tl{q}_{\bullet}:F_{\bullet}\lra C_{\bullet}$ has the property that
 \[H_0(\tl{q}_{\bullet}) : H_0(F_{\bullet}) =\tl{R} \lra H_0(C_{\bullet}) = C\mbox{ is a surjective map.}\]
\end{corollary}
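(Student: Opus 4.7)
The strategy is first to verify the vanishing $p_{i+1}\circ q_i = 0$ on minimal generators, then to define $\tl{q}_i$ by restriction of range, and finally to identify $H_0(\tl{q}_\bullet)$ with the surjection furnished by Proposition~\ref{prop:map-F-to-J}. Since the $S$-module maps $q_i$ and $p_{i+1}$ have constant matrix entries (the summand $S\subseteq F_0$ is sent to zero by $q_0$, while otherwise $q_i = \ol{q}_i\oo\id_{S(-i-1)}$ and $p_{i+1} = \ol{p}_{i+1}\oo\id_{S(-i-1)}$), it is enough to show $\ol{p}_{i+1}\circ \ol{q}_i = 0$ as an $\sl_2$-equivariant map on generators, uniformly for $i\in\{0,1,\ldots,g-2\}$.

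The core computation is a direct application of the explicit Hermite reciprocity developed in Section~\ref{subsec:Hermite}. For each $k\in\{0,1,2\}$, the isomorphism $\s^{i+1}_{g-2-i+k}$ identifies $\LL_{i+1}(g-2-i+k)$ with $\bw^{i+1}\Sym^{g-2+k}U$. By the compatibility (\ref{eq:comp-sig}), under these identifications the maps $\nu^{i+1}_{g-2-i}$ and $\nu^{i+1}_{g-1-i}$ built into $\ol{q}_i$ and $\ol{p}_{i+1}$ translate into multiplication by the elementary symmetric polynomials $e_b\in\LL_{i+1}$. Let $\omega\in\bw^{i+1}\Sym^{g-2}U$ correspond to $f\in\LL_{i+1}(g-2-i)$. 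Using the formula (\ref{eq:delta1}) for $\Delta_1$, the composition becomes
\[(\ol{p}_{i+1}\circ \ol{q}_i)(x^{(t)}\oo\omega) \ \longleftrightarrow\ f\cdot \sum_{\substack{a+b=t+1 \\ 0\le a,b\le i+1}}(a-b)\cdot e_a\cdot e_b \ \in\ \LL_{i+1}(g-i).\]
Since $\LL_{i+1}$ is commutative, the contributions of the pair $(a,b)$ and $(b,a)$ cancel as $(a-b)e_ae_b + (b-a)e_be_a = 0$, and so the sum vanishes identically.

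Once $p_{i+1}\circ q_i = 0$ is in hand, Corollary~\ref{cor:C_i=ker-pi+1} shows that the image of $q_i$ is contained in $C_i = \ker(p_{i+1})$, and restricting the range defines $\tl{q}_i:F_i\to C_i$. The map $\tl{q}_\bullet$ is a chain map because its composition with the termwise inclusion $C_\bullet\hookrightarrow J_{\bullet+1}$ is precisely $q_\bullet$. This inclusion is a quasi-isomorphism inducing the identity on $H_0\cong C$ under the identifications $H_0(C_\bullet)=C=H_1(J_\bullet)=H_0(J_{\bullet+1})$, so $H_0(\tl{q}_\bullet)$ coincides with the surjection $\tl{R}\onto C$ of Proposition~\ref{prop:map-F-to-J}, completing the argument. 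The main obstacle is the careful bookkeeping required to translate the abstract $\sl_2$-equivariant composition into polynomial multiplication in $\LL_{i+1}$; once this translation is set up via Hermite reciprocity, the vanishing is an immediate consequence of the commutativity of multiplication.
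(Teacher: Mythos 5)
Your vanishing argument is correct and close to the paper's in spirit: the paper packages the same content into the commutative diagram~(\ref{eq:big-sym-to-wedge}), identifying the composition $\ol{p}_{i+1}\circ\ol{q}_i$ with $\delta_1\circ\delta_2$ (built from Koszul differentials, hence zero), while you unwind that observation into the explicit identity $\sum_{a+b=t+1}(a-b)\,e_a e_b = 0$ in $\LL_{i+1}$ via~(\ref{eq:comp-sig}). Both proofs rest on Hermite reciprocity plus the self-cancellation of the antisymmetric element $\Delta_1(x^{(t)})$ under multiplication; your route is more computational, the paper's more structural, and the paper's has the side benefit of setting up the diagram~(\ref{eq:big-sym-to-wedge}) that is reused verbatim in the proof of Theorem~\ref{cor=kw}.

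There is one genuine slip in the surjectivity step. The inclusion $C_\bullet\hookrightarrow J_{\bullet+1}$ is \emph{not} a quasi-isomorphism: $C_\bullet$ is a resolution of $C$, so its homology is concentrated in degree $0$, whereas $J_{\bullet+1}$ has $H_{-1}(J_{\bullet+1})=H_0(J_\bullet)=\kk\neq 0$. What is true, and what your argument actually needs, is that the inclusion induces an isomorphism on $H_0$; that statement requires a short diagram chase (using exactness of the Koszul complex $K_\bullet$ and surjectivity of $p_2,p_3$) and is not automatic from the mapping-cone description. The paper avoids this entirely by observing, exactly as in the proof of Proposition~\ref{prop:map-F-to-J}, that $\tl{q}_0$ already carries the degree-one generators $\Sym^{g-2}U$ of $F_0$ isomorphically onto the degree-one generators $\Sym^{g-2}U$ of $C_0$, whence $H_0(\tl{q}_\bullet)$ is surjective. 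You should either replace your quasi-isomorphism claim with that direct check on generators, or supply the short argument that $H_0(C_\bullet)\to H_0(J_{\bullet+1})$ is an isomorphism; as written, the justification is wrong even though the conclusion is correct.
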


\begin{proof}
 If we replace $i$ by $i+1$, let $d=g-2-i$, and tensor the commutative diagram~(\ref{eq:sym-to-wedge}) with $\D^{i+1}U$ then we obtain a commutative diagram
\[
 \xymatrix{
  (\D^{i+1}U)^{\oo 2} \oo \Sym^{g-2-i}(\D^{i+1} U) \ar[rr]_(.52){\id^{\oo 2}\oo \psi_{g-2-i}} \ar[d] & & (\D^{i+1}U)^{\oo 2}  \oo \bw^{i+1}(\Sym^{g-2} U) \ar[d]^{\id\oo \nu^{i+1}_{g-2-i}} \\
  \D^{i+1}U \oo \Sym^{g-1-i}(\D^{i+1} U) \ar[rr]_{\id\oo \psi_{g-1-i}} & & \D^{i+1}U \oo \bw^{i+1}(\Sym^{g-1} U) \\
}
 \]
Restricting the top row of the diagram above along the chain of maps
 \[ \D^{2i}U \overset{\Delta_1}{\lra} \bw^2 \D^{i+1}U \subseteq \bigl(\D^{i+1}U\bigr)^{\oo 2} \]
 and recalling that $q_i$ was defined by restricting along the same maps, we obtain the commutativity of the top square in the diagram
\begin{equation}\label{eq:big-sym-to-wedge}
\begin{aligned}
 \xymatrix{
  \D^{2i}U \oo \Sym^{g-2-i}(\D^{i+1} U) \ar[rr]_{\id\oo \psi_{g-2-i}} \ar[d]_{\delta_2} & & \D^{2i}U \oo \bw^{i+1}(\Sym^{g-2} U) \ar[d]^{\ol{q}_i} \\
  \D^{i+1}U \oo \Sym^{g-1-i}(\D^{i+1} U) \ar[d]_{\delta_1} \ar[rr]_{\id\oo \psi_{g-1-i}} & & \D^{i+1}U \oo \bw^{i+1}(\Sym^{g-1} U) \ar[d]^(.6){\ol{p}_{i+1}} \\
  \Sym^{g-i}(\D^{i+1} U) \ar[rr]_{\psi_{g-i}} & & \bw^{i+1}(\Sym^{g} U) \\
}
 \end{aligned}
 \end{equation}
Note that the bottom square of this diagram also commutes, being another instance of~(\ref{eq:sym-to-wedge}), namely that where we replace $i$ by $i+1$, let $d=g-1-i$, and recall that $\ol{p}_{i+1}=\nu_{g-i-1}^{i+1}$.

In the diagram \eqref{eq:big-sym-to-wedge} the maps in the left column are induced by Koszul differentials, so their composition must be zero. Since the horizontal maps are isomorphisms the maps in the right column must compose to zero as well, hence $\ol{p}_{i+1} \circ \ol{q}_i=0$ and therefore $p_{i+1}\circ q_i=0$. Using the fact that $C_i = \ker(p_{i+1})$ (see \eqref{eq:Ci=kerpi+1}), it follows that $q_{\bullet}$ induces (by restricting the range) a map of complexes $\tl{q}_{\bullet}:F_{\bullet}\lra C_{\bullet}$. Just as in the proof of Proposition~\ref{prop:map-F-to-J} it follows by inspecting the generators that $H_0(\tl{q}_{\bullet})$ is surjective.
\end{proof}

\begin{proof}[Proof of Theorem~\ref{cor=kw}]
We  consider the minimization $N_{\bullet}$ of the mapping cone of $\tl{q}_{\bullet}$, whose terms are computed by Lemma~\ref{lem:minimization} via
\begin{equation}\label{eq:Ni=ker-coker}
N_i = \ker(\tl{q}_{i-1}) \oplus \coker(\tl{q}_i).
\end{equation}
We have that $N_0=0 $  and  $N_1 = S \oplus \coker(\tl{q}_1)$,
where $\coker(\tl{q}_1)$ is a free module generated in degree $2$ (if non-zero). From the long exact sequence for the homology of the mapping cone, we obtain that $N_{\bullet}$ has only one non-zero homology group, namely $H_1(N_{\bullet}) = \ker\bigl\{H_0(\tl{q}_{\bullet}) : H_0(F_{\bullet}) \lra H_0(C_{\bullet})\bigr\}=R'\supseteq  R$.
It follows that $N_{\bullet}$ is quasi-isomorphic to the minimal free resolution of $R'$, shifted by~$1$. Since the summand $S\subseteq N_1$ maps onto $R$, the quotient $R'/R$ is generated in degree $2$ (if non-zero), in particular
\begin{equation}\label{eq:TorR=R'}
\Tor_i^S(R,\kk)_{i+1} = \Tor_i^S(R',\kk)_{i+1} \mbox{ for all }i\geq 0.
\end{equation}
Since $\ker(\tl{q}_i)$ is a free module generated in degree $i+1$ for $i\geq 1$, and $\coker(\tl{q}_i)$ is a free module generated in degree $i+1$ for $1\leq i\leq g-2$, and in degree $g+1$ when $i=g-1$, we conclude that
\begin{equation}\label{eq:TorR'=ker-tlq}
\Tor_i^S(R',\kk)_{i+1} = \Tor_0^S(N_{i+1},\kk)_{i+1} = \ker(\tl{q}_i)_{i+1}, \mbox{ for all }i\geq 0.
\end{equation}
 Recall from Proposition~\ref{prop:map-J-to-K} and (\ref{eq:Ci=kerpi+1}) that $C_i$ is a direct summand of $J_{i+1}$. From Proposition~\ref{prop:map-F-to-J} and Corollary~\ref{cor:F-to-C} we recall that the map of complexes $\tl{q}_{\bullet}:F_{\bullet}\lra C_{\bullet}$ is induced by a map of complexes $q_{\bullet}:F_{\bullet}\lra J_{\bullet+1}$. It follows that $\ker(\tl{q}_i)=\ker(q_i)$ and in particular
\begin{equation}\label{eq:ker-tlq=ker-q}
\ker(\tl{q}_i)_{i+1} = \ker\Bigr\{\D^{2i}U \oo \bw^{i+1}(\Sym^{g-2} U) \overset{\ol{q}_i}{\lra} \D^{i+1}U \oo \bw^{i+1}(\Sym^{g-1} U)\Bigr\},
\end{equation}
where the map $\ol{q}_i$ was defined in Proposition~\ref{prop:map-F-to-J}. Using the commutative diagram
\[
 \xymatrix@C=3em{
  \D^{2i}U \oo \Sym^{g-2-i}(\D^{i+1} U) \ar[r]_(0.52){\id\oo \psi_{g-2-i}} \ar[d]_{\delta_2} & \D^{2i}U \oo \bw^{i+1}(\Sym^{g-2} U)  \ar[d]^{\ol{q}_i} \\
  \D^{i+1}U \oo \Sym^{g-1-i}(\D^{i+1} U)  \ar[r]_(0.52){\id\oo \psi_{g-1-i}} & \D^{i+1}U \oo \bw^{i+1}(\Sym^{g-1} U)  \\
}
 \]
from the proof of Corollary~\ref{cor:F-to-C}, together with (\ref{eq:TorR=R'}), (\ref{eq:TorR'=ker-tlq}) and (\ref{eq:ker-tlq=ker-q}) we obtain
\[
 K_{i,1}\bigl(\mc{T}, \OO_{\mc{T}}(1)\bigr) = \Tor_i^S(R,\kk)_{i+1}= \Tor_i^S(R',\kk)_{i+1}= \ker(\ol{q}_i) = \ker(\delta_2),
\]
concluding the proof of the theorem.
\end{proof}

\subsection{The minimal resolution of $R$}
\label{subsec:char-not-2}

Throughout this section we assume that $\chr(\kk)\neq 2$, and we analyze more carefully the constructions from the previous sections in order to prove Theorems~\ref{thm:shape-res-R} and~\ref{thm:Ki2=Wi+2}. The main observation is the following.

\begin{lemma}\label{lem:coker-tl-q}
 For $i=1,\ldots,g-2$, we have that $\coker(\tl{q}_i)$ is isomorphic to $W^{(i+1)}_{g-2-i} \oo_\kk S(-i-1)$.
\end{lemma}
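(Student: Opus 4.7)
The plan is to reduce the computation of $\coker(\tl{q}_i)$ to a purely linear-algebraic problem at the level of minimal generators, and then apply the characteristic-free Hermite reciprocity isomorphisms of Section~\ref{subsec:Hermite} to identify the resulting vector space with $W^{(i+1)}_{g-2-i}$.

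First I would observe that for $1\leq i\leq g-2$, both $F_i$ and $C_i$ are free $S$-modules generated in degree $i+1$: for $F_i$ this is Proposition~\ref{prop:resolution-Rbar}, while $C_i=\ker(p_{i+1})$ by Corollary~\ref{cor:C_i=ker-pi+1}, and since $p_{i+1}$ is base-changed to $S$ from a linear map $\ol{p}_{i+1}$ between vector spaces in degree $i+1$, we get $C_i\cong \ker(\ol{p}_{i+1})\oo_\kk S(-i-1)$. The same flat base-change applied to $\tl{q}_i$ yields
$$\coker(\tl{q}_i) \cong \coker(\ol{q}_i)\oo_\kk S(-i-1),$$
where $\ol{q}_i$ denotes the induced map of vector spaces on minimal generators. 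Moreover, since $p_{i+1}\circ q_i=0$ by Corollary~\ref{cor:F-to-C}, the image of $\ol{q}_i$ lies in $\ker(\ol{p}_{i+1})$, so that $\coker(\ol{q}_i)$ is precisely the middle homology of the three-term sequence of vector spaces
$$\D^{2i}U \oo \bw^{i+1}\Sym^{g-2}U \overset{\ol{q}_i}{\lra} \D^{i+1}U \oo \bw^{i+1}\Sym^{g-1}U \overset{\ol{p}_{i+1}}{\lra} \bw^{i+1}\Sym^g U.$$

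To conclude, I would invoke the commutative diagram~\eqref{eq:big-sym-to-wedge} already established in the proof of Corollary~\ref{cor:F-to-C}: its horizontal arrows are the Hermite reciprocity isomorphisms produced by Lemma~\ref{lem:psi_d-equivariant}, and it identifies the three-term complex above with
$$\D^{2i}U \oo \Sym^{g-2-i}(\D^{i+1}U) \overset{\delta_2}{\lra} \D^{i+1}U \oo \Sym^{g-1-i}(\D^{i+1}U) \overset{\delta_1}{\lra} \Sym^{g-i}(\D^{i+1}U),$$
whose middle homology is, by the definition~\eqref{eqnWqDef} applied to $V=\D^{i+1}U$ and $K=\D^{2i}U\hookrightarrow \bw^2 V$ with $q=g-2-i$, precisely the Weyman module component $W^{(i+1)}_{g-2-i}$. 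The argument is essentially a clean assembly of previously established facts; the only conceptual step is the reduction to a finite-dimensional cokernel via the scalar nature of $p_{i+1}$, and I do not anticipate any serious obstacle.
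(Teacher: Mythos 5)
Your proposal is correct and follows essentially the same route as the paper's proof: reduce to the level of minimal generators (where all maps are constant, so cokernels are computed by base change from $\kk$ to $S$), identify $\coker$ with $\ker(\ol{p}_{i+1})/\Im(\ol{q}_i)$ via Corollaries~\ref{cor:C_i=ker-pi+1} and~\ref{cor:F-to-C}, and then apply the Hermite reciprocity diagram~(\ref{eq:big-sym-to-wedge}) to recognize this as $\ker(\delta_1)/\Im(\delta_2)=W^{(i+1)}_{g-2-i}$. You spell out the flat base-change step more explicitly than the paper, but the content and the cited ingredients are identical.
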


\begin{proof}
 Since $F_i$ and $C_i$ are free $S$-modules generated in degree $i+1$ for $1\leq i\leq g-2$, and since $\tl{q}_i$ is a homogeneous map, it follows that $\coker(\tl{q}_i)$ is a free module generated in degree $i+1$. To identify the minimal generators of $\coker(\tl{q}_i)$, we recall that $\tl{q}_i$ is induced by the map $q_i$, whose image lands inside $C_i=\ker(p_{i+1})$ by Corollary~\ref{cor:F-to-C}. It follows that the minimal generators of $\coker(\tl{q}_i)$ are described as
 \[\frac{\ker(\ol{p}_{i+1})}{\Im(\ol{q}_i)} \overset{(\ref{eq:big-sym-to-wedge})}{\cong} \frac{\ker(\delta_1)}{\Im(\delta_2)} = W^{(i+1)}_{g-2-i}.\qedhere\]
\end{proof}

Using this observation, we can now deduce Theorems~\ref{thm:shape-res-R} and~\ref{thm:Ki2=Wi+2}. We start with:

\begin{corollary}\label{cor:R=R'} We have $R'=R$ and Theorem~\ref{thm:Ki2=Wi+2} holds.
\end{corollary}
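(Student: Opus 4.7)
The plan is to use Lemma~\ref{lem:coker-tl-q} together with the description~(\ref{eq:Ni=ker-coker}) of the terms of the minimization $N_\bullet$ of the mapping cone of $\tl{q}_\bullet$. The crucial input is the vanishing $\coker(\tl{q}_1) = 0$, from which both assertions of the corollary follow by tracking minimal generators.

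First I will verify $W^{(2)} = 0$. By definition $W^{(2)} = W(\D^2 U, \D^2 U)$, where $\D^2 U$ is embedded in $\bw^2 \D^2 U$ via $\Delta_1$. Under the standing assumption $\chr(\kk)\neq 2$, Lemma~\ref{lem:mu1-delta1} says that $\Delta_1$ is injective; since both spaces have dimension $\binom{3}{2}=3$, the map $\Delta_1$ must in fact be an isomorphism. Hence $W^{(2)} = W(\D^2 U, \bw^2 \D^2 U) = 0$, and in particular $W^{(2)}_{g-3}=0$. Applying Lemma~\ref{lem:coker-tl-q} at $i=1$ yields $\coker(\tl{q}_1)=0$.

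I then conclude $R = R'$ as follows. By~(\ref{eq:Ni=ker-coker}), $N_1 = \ker(\tl{q}_0) \oplus \coker(\tl{q}_1) = S \oplus 0 = S$, so the minimal resolution $N_{\bullet+1}$ of $R'$ has a unique minimal generator, sitting in degree $0$. Thus $R'$ is the cyclic $S$-submodule of $\tl{R}$ generated by $1 \in \tl{R}_0$. But this submodule is also the image of the structure morphism $S \to \tl{R}$, which is $R$. Hence $R = R'$.

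Finally, Theorem~\ref{thm:Ki2=Wi+2} follows by reading off the degree-$(i{+}2)$ minimal generators of $N_{i+1}$ for $i = 1, \ldots, g-3$. By~(\ref{eq:Ni=ker-coker}), these generators lie in the summand $\coker(\tl{q}_{i+1})$, and Lemma~\ref{lem:coker-tl-q} identifies this summand with $W^{(i+2)}_{g-3-i}\oo_\kk S(-i-2)$. Since $N_{\bullet+1}$ is the minimal free resolution of $R = R'$, this yields
\[K_{i,2}\bigl(\mc{T}, \OO_{\mc{T}}(1)\bigr) = \Tor_i^S(R,\kk)_{i+2} = W^{(i+2)}_{g-3-i},\]
which is Theorem~\ref{thm:Ki2=Wi+2}. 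The only step that is not purely formal is the vanishing $W^{(2)}=0$; this is a genuine low-dimensional coincidence, crucially relying on $\chr(\kk)\neq 2$. All the remaining work has been arranged in Sections~\ref{sec:resln-R-bar}--\ref{sec:resln-R}.
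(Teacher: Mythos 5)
Your proposal is correct and follows essentially the same route as the paper: show $W^{(2)}=0$ from the dimension count on $\Delta_1:\D^2 U\hookrightarrow\bw^2\D^2 U$ (using $\chr(\kk)\neq 2$ for injectivity), deduce $\coker(\tl{q}_1)=0$ via Lemma~\ref{lem:coker-tl-q}, and then read off both $R'=R$ and the identification $K_{i,2}(\mc{T},\OO_{\mc{T}}(1))=W^{(i+2)}_{g-3-i}$ from the minimal resolution $N_{\bullet+1}$ and (\ref{eq:Ni=ker-coker}). No gaps; the bookkeeping of degrees and indices in Lemma~\ref{lem:coker-tl-q} matches the paper's argument.
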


\begin{proof}
The complex $N_{\bullet+1}$ gives the minimal free resolution of~$R'$. Since $R\subseteq R'$ is the image of the summand $S$ of $N_1$ in (\ref{eq:Ni=ker-coker}), in order to prove that $R'=R$, it suffices to check that $\coker(\tl{q}_1)=0$, which by Lemma~\ref{lem:coker-tl-q} is equivalent to the vanishing $W^{(2)}_{g-3}=0$. It is then enough to prove that $W^{(2)}$ is identically zero, which is equivalent to the fact that the inclusion $\Delta_1:\D^2 U \hookrightarrow \bw^2(\D^2 U)$ is an equality. Since $\dim(\D^2 U) = 3$, the source and target of $\Delta_1$ have both dimension three, hence $\Delta_1$ is indeed an equality.

Since the complex $N_{\bullet+1}$ gives the minimal free resolution of $R'=R$, we obtain for $i=1,\cdots,g-3$
\[ K_{i,2}\bigl(\mc{T},\OO_{\mc{T}}(1)\bigr) = \Tor_i^S(R,\kk)_{i+2} = (N_{i+1} \oo_S \kk)_{i+2} \overset{(\ref{eq:Ni=ker-coker})}{=} \coker(\tl{q}_{i+1}) \oo_S \kk = W^{(i+2)}_{g-3-i}.\qedhere\]
\end{proof}

We end this section by verifying Theorem~\ref{thm:shape-res-R}.

\begin{proof}[Proof of Theorem~\ref{thm:shape-res-R}]
 We first prove that $R$ is Cohen--Macaulay (see also \cite[Proposition~6.1]{Sch0} for a different proof in characteristic zero). Since $\dim(R)=\dim(\mc{T})+1=3$, and $\dim(S)=g+1$, it suffices to check that $R$ has projective dimension (at most) $g-2$ as an $S$-module. Since $N_{\bullet+1}$ is the minimal resolution of $R$, this is equivalent to the fact that $N_i = 0$ for $i\geq g$. We obtain this conclusion using (\ref{eq:Ni=ker-coker}), since the source of $\tl{q}_{i-1}$ is $F_{i-1} = 0$ for $i\geq g$, and the target of $\tl{q}_i$ is $C_i=0$ for $i\geq g$.

To prove that $R$ is Gorenstein of regularity $3$, it is then enough to check that
\[ \Tor_{g-2}^S(R,\kk) = N_{g-1} \oo_S \kk\]
is one-dimensional, concentrated in degree $g+1$. Since $\coker(\tl{q}_{g-1}) = C_{g-1} \cong S(-g-1)$, it follows from (\ref{eq:Ni=ker-coker}) that it is enough to check that $\tl{q}_{g-2}$ is injective, or equivalently, that $\ol{q}_{g-2}$ is injective. Using the commutativity of the top square in (\ref{eq:big-sym-to-wedge}) with $i=g-2$, this is further equivalent to the injectivity of
\[ \D^{2(g-2)}U \overset{\delta_2}{\lra} \D^{g-1}U \oo \D^{g-1}U.\]
Since $\delta_2$ is given in this case by the map $\Delta_1$ in Lemma~\ref{lem:mu1-delta1} with $a=g-1$, the conclusion follows.
\end{proof}

We record one more consequence of the previous results:

\begin{corollary}\label{cor:omega-T}
 If $\chr(\kk)\neq 2$, the dualizing sheaf $\omega_{\mc{T}}$ is trivial, and $H^1(\mc{T},\OO_{\mc{T}}(d)) = 0$ for all $d$.
\end{corollary}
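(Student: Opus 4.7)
The plan is to derive both assertions as formal consequences of the Cohen--Macaulay and Gorenstein properties of the homogeneous coordinate ring $R$ of $\mc{T}$, both recorded in Theorem~\ref{thm:shape-res-R}. Since $\mc{T}\subseteq\PP^g$ is a surface, $R$ has Krull dimension $3$; combined with the regularity statement $\reg(R)=3$, this will pin down the $a$-invariant of $R$.

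For the cohomology vanishing $H^1(\mc{T},\OO_{\mc{T}}(d))=0$, I would invoke the standard identification between sheaf and local cohomology on $\mc{T}=\mathrm{Proj}(R)$,
\[H^i\bigl(\mc{T},\OO_{\mc{T}}(d)\bigr) \;\cong\; H^{i+1}_{\mf{m}}(R)_d \qquad\text{for }i\geq 1,\ d\in\bb{Z},\]
where $\mf{m}$ is the irrelevant maximal ideal of $R$. Cohen--Macaulayness together with $\dim(R)=3$ forces $H^j_{\mf{m}}(R)=0$ for every $j<3$; specialising to $j=2$ gives the desired vanishing for all~$d$.

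For the triviality of $\omega_{\mc{T}}$, I would compute the graded canonical module $\omega_R$ using the self-dual shape of the minimal free resolution recorded in~(\ref{eq:betti-T}). By Theorem~\ref{thm:shape-res-R} that resolution has the form
\[0\lra S(-g-1)\lra F_{g-3}\lra\cdots\lra F_1\lra S\lra R\lra 0,\]
so applying $\Hom_S(-,\omega_S)$ with $\omega_S=S(-g-1)$ yields a complex whose rightmost term is $\Hom_S\bigl(S(-g-1),S(-g-1)\bigr)=S$. Thus $\omega_R=\op{Ext}^{g-2}_S(R,\omega_S)$ is a cyclic $S$-quotient generated in degree~$0$. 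Since $R$ is Gorenstein, $\omega_R$ must be a cyclic free $R$-module of rank one, and this degree-zero generator forces $\omega_R\cong R(0)=R$. Sheafifying on $\mc{T}$ then gives $\omega_{\mc{T}}\cong\OO_{\mc{T}}$.

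There is no substantive obstacle: the whole statement is a routine consequence of Theorem~\ref{thm:shape-res-R}. The only step requiring attention is the degree shift in the identification $\omega_R\cong R(a)$ for graded Gorenstein rings, which the argument above pins down by matching the tail $S(-g-1)$ of the minimal resolution with the canonical module $\omega_S=S(-g-1)$ of the ambient polynomial ring.
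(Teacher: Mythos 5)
Your proof is correct and takes essentially the same approach as the paper: both deduce the triviality of $\omega_{\mc{T}}$ from the self-dual shape of the Gorenstein resolution via $\op{Ext}^{g-2}_S(R,-)$ and sheafification, and both derive the vanishing $H^1(\mc{T},\OO_{\mc{T}}(d))=0$ from arithmetic Cohen--Macaulayness (you phrase it directly through local cohomology $H^2_{\mf{m}}(R)=0$, while the paper cites Eisenbud's book for the same fact).
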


\begin{proof}
 It follows from Theorem~\ref{thm:shape-res-R} that $\op{Ext}^{g-2}_S(R,S) = R(g+1)$. By sheafification this implies that
$ \ShExt^{\ g-2}_{\PP^g}(\OO_{\mc{T}},\OO_{\PP^g}) = \OO_{\mc{T}}(g+1)$.
Using  $\omega_{\PP^g} \cong \OO_{\PP^g}(-g-1)$ and \cite[Proposition~III.7.5]{hartshorne}, it follows that
\[ \omega_{\mc{T}} = \ShExt^{\ g-2}_{\PP^g}(\OO_{\mc{T}},\omega_{\PP^g}) = \ShExt^{\ g-2}_{\PP^g}(\OO_{\mc{T}},\OO_{\PP^g})(-g-1) = \OO_{\mc{T}},\]
as desired. The vanishing $H^1(\mc{T},\OO_{\mc{T}}(d)) = 0$ follows from the fact that $\mc{T}$ is aritmetically Cohen--Macaulay (combine for instance \cite[Propositions~A1.11 and~A1.16]{E04}).
\end{proof}

One consequence of the Cohen--Macaulay property of $R$ is the isomorphism $R\cong \Gamma_{\mc{T}}(\OO_{\mc{T}}(1))$, which amounts to $\Gamma_{\mc{T}}(\OO_{\mc{T}}(1))$ being generated in degree one.

\subsection{The resolution of $R$ in characteristic $2$}
\label{subsec:char2}

The goal of this section is to prove that (unlike in the general case) the resolution of $R$ consists of a single linear strand when $\op{char}(\kk)=2$, which is obtained as a subcomplex of $F_{\bullet}$ as explained below. We identify $S=\kk[z_0,\ldots,z_g]$, where $z_i$ is dual to $x^{(i)}$, and claim that the $2\times 2$ minors of the matrix
\[
M=\begin{bmatrix}
z_0 & z_1 & \cdots & z_{g-2} \\
z_2 & z_3 & \cdots & z_g
\end{bmatrix}
\]
vanish on $\mc{T}$. Since $\mc{T}$ is irreducible, it is enough to check the vanishing on some affine chart. Choosing the chart as in (\ref{eq:chart-T}) and evaluating $M$ on an element $f\in\mc{T}$ we obtain
\[
M(f) = u\cdot \begin{bmatrix}
1 & t & t^2 & t^3 & \cdots & t^{g-2} \\
t^2 & t^3 & t^4 & t^5 & \cdots & t^g
\end{bmatrix}
+ v\cdot \begin{bmatrix}
0 & t & 0 & t^3 & \cdots & (g-2)\cdot t^{g-2} \\
0 & t^3 & 0 & t^5 & \cdots & g\cdot t^g
\end{bmatrix}
\]
for some scalars $u,v,t\in\kk$. Since the second row of $M(f)$ is obtained from the first by multiplying by $t^2$,  the $2\times 2$ minors of $M(f)$ vanish on $\mc{T}$, as desired.  The $2\times 2$ minors of $M$ define a rational normal scroll of dimension~$2$ and degree $g-1$, containing $\mc{T}$, see \cite[Section~A2H]{E04}. Carrying out in characteristic $2$ the local analysis in Section~\ref{subsec:pparts}, we obtain that $\mbox{deg}(\mc{T})=g-1$, hence $\mc{T}$ equals the scroll. We conclude that the minimal resolution of $R$ is given by an Eagon--Northcott complex.


To identify this as a subcomplex of $F_{\bullet}$, we let for $i\geq 1$
\begin{equation}\label{eq:D2i-odd}
\D^{2i}_{\op{odd}}U = \bigoplus_{1\leq t\leq i} \kk\cdot x^{(2t-1)} = \{f\in\D^{2i}: \Lop\cdot f = \Rop\cdot f = 0 \}\subseteq \D^{2i}U.
\end{equation}
We define $F_0^{\op{odd}}=S$ and
\begin{equation}\label{eq:Fi-odd}
F_i^{\op{odd}} = \Bigl(\D^{2i}_{\op{odd}}U \oo \bw^{i+1}\Sym^{g-2}U\Bigr)\oo S(-i-1), \mbox{ for }i=1,\ldots,g-2.
\end{equation}
Since ${2\choose 1}=0$ in $\kk$, it follows from (\ref{eq:def-delF}) that $F_{\bullet}^{\op{odd}}$ determines a subcomplex of $F_{\bullet}$. Taking into account that $\rk(F_1^{\op{odd}})={g-1\choose 2} = \dim(I_2)$,
we get that that $F_1^{\op{odd}}$ surjects onto the quadratic part of the ideal $I$. Since $I$ is generated in degree $2$, we get that $H_0(F_{\bullet}^{\op{odd}})=R$. We can now define $\D^{2i}_{\op{even}}$ and $F_{\bullet}^{\op{even}}$ in analogy with (\ref{eq:D2i-odd}) and (\ref{eq:Fi-odd}), and observe that $F_{\bullet}^{\op{even}}$ is also a subcomplex of $F_{\bullet}$: it follows from (\ref{eq:def-delF}) that the linear part of the differentials $\pd_{\bullet}^F$ send $F_{\bullet}^{\op{even}}$ to itself, so we only need to worry about the quadratic part of $\pd_1^F$; since $F_1^{\op{odd}}$ already gives the minimal generators of $I$, the quadratic part of $\pd_1^F$ restricts to $0$ on $F_1^{\op{even}}$, showing that $F_{\bullet}^{\op{even}}$ is indeed a subcomplex. It follows that $F_{\bullet}=F_{\bullet}^{\op{odd}}\oplus F_{\bullet}^{\op{even}}$, and in particular $H_i(F_{\bullet}^{\op{odd}})=0$ for $i>0$, showing that $F_{\bullet}^{\op{odd}}$ is the minimal resolution of $R$.

\begin{remark}\label{rem:T-on-scroll}
 A similar local calculation  shows that if $3\leq \chr(\kk)=p\leq g$, then $\mc{T}$ is contained in a rational normal scroll of codimension $g-p$, defined by the vanishing of the $2\times 2$ minors of the matrix
\[
\begin{bmatrix}
z_0 & z_1 & \cdots & z_{g-p} \\
z_p & z_{p+1} & \cdots & z_g
\end{bmatrix}.
\]
The (linear) syzygies of the scroll are then going to be contained inside the linear syzygies of $\mc{T}$. Since the Eagon--Northcott resolution of the scroll has length $g-p$, we conclude that $K_{i,1}(\mathcal{T}, \OO_{\mathcal{T}}(1))\neq 0$ for $1\leq i\leq g-p$. In the range $3\leq p\leq \frac{g+1}{2}$ this non-vanishing is sharp: indeed, we have  $b_{i,1} = b_{g-2-i,2}$, so the conclusion $b_{i,1}\neq 0$ if and only if $1\leq i\leq g-p$ follows from (\ref{eq:ki2-not-0-psmall}).
\end{remark}


\section{Moduli of pseudo-stable curves and Green's Conjecture}
\label{sec:moduli-Green}

In this section we explain how Theorem~\ref{thm:vanishing-tangential} implies Green's Conjecture for general curves of genus~$g$, that is, Theorem~\ref{thm:genericGreen}. The key idea is to consider a moduli space of curves of genus $g$ which contains among its points the cuspidal curves that are hyperplane sections of the tangential variety $\mathcal{T}\subseteq \mathbb P^g$. We fix throughout an algebraically closed field $\kk$ of characteristic $p\neq 2,3$. This assumption excludes the cases $p=3$ and $g=3,4$ from Theorem~\ref{thm:genericGreen}, but Green's Conjecture can be verified directly in these cases (see also~\cite{Sch1} for a proof of Green's Conjecture in small genera).

\begin{defn}\label{defps}
A \emph{pseudo-stable} curve $C$ of genus $g$ is a  connected curve over $\kk$ of arithmetic genus $g$ with only nodes and cusps as singularities such that (i) its dualizing sheaf $\omega_C$  is ample and (ii) each irreducible component of arithmetic genus $1$ of $C$ intersects the rest of the curve in at least two points.
\end{defn}

Schubert \cite{Schu} constructs the coarse moduli space of pseudo-stable curves of genus $g$ as the Chow quotient of $3$-canonically embedded curves. The  construction and geometry of $\mmp$ is described in \cite{Schu} and \cite{HH}. In particular, one has a divisorial contraction
$$\pi:\mm_g\rightarrow  \mmp,$$
which at the level of geometric points replaces each elliptic tail of a stable curve with a cusp. More precisely, if $[C]\in \mm_g$ is a stable curve having elliptic tails $E_1, \ldots, E_{\ell}$ with $E_i\cap \overline{(C\setminus E_i)}=\{p_i\}$, for $i=1, \ldots, \ell$, then $[C'] := \pi([C])$ is described as follows: if we denote by $D$ the union of the components of $C$ different from the elliptic tails then the pseudo-stable curve $C'$ is birational to $D$, and there exists a morphism $\nu:C\rightarrow C'$ contracting each tail $E_i$ to a cusp $q_i\in C'$ (see \cite[Proposition~3.1]{HH} for details).

\vskip 3pt

The following result is hinted at in \cite{Sm}.

\begin{prop}
Pseudo-stable curves of genus $g\geq 3$ form a Delige-Mumford stack $\mmp$ over $\mathrm{Spec } \ \mathbb Z\bigl[\frac{1}{6}\bigr]$.
\end{prop}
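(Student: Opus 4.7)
The plan is to construct $\mmp$ as a quotient stack associated with a tricanonical Hilbert scheme, and then verify the Deligne--Mumford conditions by a careful analysis of automorphism group schemes in the presence of cusps.

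First, I would verify that the tricanonical linear system gives an embedding for every pseudo-stable curve. For $C$ a pseudo-stable curve of genus $g\ge 3$ over an algebraically closed field $\kk$ with $\chr(\kk)\ne 2,3$, ampleness of $\omega_C$ combined with a Riemann--Roch argument and a local analysis at nodes and cusps shows that $\omega_C^{\otimes 3}$ is very ample and $H^1(C,\omega_C^{\otimes 3})=0$. This gives a closed embedding $C\hookrightarrow \mathbb P^{5g-6}$ with Hilbert polynomial $P(t)=(6t-1)(g-1)$. Let $H\subseteq \op{Hilb}^P(\mathbb P^{5g-6}_{\mathbb Z[1/6]})$ denote the locally closed subscheme parametrizing tricanonically embedded pseudo-stable curves (this can be cut out as the intersection of the locus of curves with only nodes and cusps, the locus where each elliptic component meets the rest of the curve in at least two points, and the locus of tricanonically embedded curves). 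I would then set $\mmp := [H/\mathrm{PGL}_{5g-5}]$, which is automatically an algebraic stack of finite type over $\Spec \mathbb Z[1/6]$.

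It remains to check the Deligne--Mumford property, which reduces to showing that the automorphism group scheme $\mathrm{Aut}(C)$ is finite and reduced for every geometric point $[C]\in\mmp$. Finiteness follows from ampleness of $\omega_C$ and the elliptic-tail condition: if an elliptic component met the rest of $C$ in a single point, one-parameter translations would fix the curve and produce a positive-dimensional stabilizer. Condition (ii) of Definition~\ref{defps} is precisely what rules this out, mirroring the analogous step in the construction of $\overline{\mc{M}}_g$.

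The main obstacle is the reducedness of $\mathrm{Aut}(C)$, which is equivalent to the vanishing of the tangent space $H^0(C,\mathscr{D}er_{\kk}(\OO_C))$. At a node this sheaf contributes nothing in any characteristic, but at a cusp with local ring $\widehat{\OO}_{C,q}\cong \kk[[t^2,t^3]]$ one must analyze derivations carefully: the relation $(t^3)^2=(t^2)^3$ forces any derivation $D$ to satisfy $2\cdot t^3\cdot D(t^3)=3\cdot t^4\cdot D(t^2)$, so that $D$ is a $\widehat{\OO}_{C,q}$-multiple of the Euler derivation $t\pd_t$ only when $6$ is invertible. In characteristics $2$ or $3$ extra local derivations appear, producing infinitesimal automorphisms of the cusp that obstruct reducedness, whereas over $\Spec \mathbb Z[1/6]$ these pathologies disappear. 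To globalize, I would pull back via the normalization $\nu:\widetilde{C}\to C$ and obtain an inclusion $\nu^*\mathscr{D}er_{\kk}(\OO_C)\hookrightarrow \mathscr{T}_{\widetilde{C}}(-\sum 2q_i - \sum \tilde n_j)$, where $q_i$ are cusp preimages and $\tilde n_j$ are node preimages; a degree count on each connected component of $\widetilde{C}$, using $g\ge 3$ together with the elliptic-tail condition, shows that the resulting bundle has no global sections, so $H^0(C,\mathscr{D}er_{\kk}(\OO_C))=0$ as required.
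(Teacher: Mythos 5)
Your strategy coincides in essence with the paper's: the Deligne--Mumford property is reduced to showing that the group of infinitesimal automorphisms $H^0(C,\Omega_C^{\vee})$ vanishes, and this is proved by analyzing derivations locally at the singular points and then doing a vanishing argument on the normalization. The paper normalizes only at the cusps, cites \cite[Proposition~2.3]{Sm} for the identity $\Omega_C^{\vee}=\nu_*\bigl(\Omega_D^{\vee}(-q_1-\cdots-q_{\ell})\bigr)$, and invokes the known vanishing of infinitesimal automorphisms for the stable pointed (nodal) curve $(D,q_1,\ldots,q_\ell)$; you normalize all the way and re-derive this by a degree count, which is the same argument unwound. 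You also sketch the construction of $\mmp$ as a quotient stack, which the paper simply delegates to \cite{Schu} and \cite{HH}.

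Your local analysis at the cusp, however, has a concrete error in the vanishing order. With $\widehat{\OO}_{C,q}\cong\kk[[t^2,t^3]]$ and $6$ invertible, the constraint $2t^3D(t^3)=3t^4D(t^2)$ forces the extension of $D$ to $\kk[[t]]$, written $f\,\pd_t$, to satisfy $f\in t\,\kk[[t]]$ --- vanishing to order \emph{one}, not two. (In particular the module of such derivations is not even $\widehat{\OO}_{C,q}$-cyclic on the Euler field, since $t^2\pd_t$ preserves $\kk[[t^2,t^3]]$ but is not an $\widehat{\OO}_{C,q}$-multiple of $t\pd_t$.) The correct statement is that the infinitesimal automorphisms of $C$ inject into $H^0\bigl(\widetilde{C},\mathscr{T}_{\widetilde{C}}(-\sum q_i - \sum \tilde n_j)\bigr)$ with coefficient $1$ at every preimage, exactly as in the formula the paper cites; the inclusion you wrote, into the $-2q_i$ twist, is false. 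A quick sanity check: for the projective cuspidal cubic, the correct twist gives $h^0(\PP^1,\OO_{\PP^1}(1))=2$, matching $\dim\mathrm{Aut}=2$, whereas the $-2q$ twist would give $1$. The degree count does still close with the corrected coefficient, since pseudo-stability forces $2g(Y)+\#\{\text{special points on }Y\}>2$ on each component $Y$ of $\widetilde{C}$; but as written, your argument only establishes the vanishing of the $H^0$ of a strictly smaller sheaf, which does not bound $H^0(C,\Omega_C^{\vee})$.
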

\begin{proof}
To conclude that the algebraic stack of pseudo-stable curves of genus $g$ is of Deligne-Mumford type, it suffices to show that  a pseudo-stable curve $C$ of genus
$g\geq 3$ admits no infinitesimal deformations, that is, $H^0(C,\Omega_C^{\vee})=0$. We denote by $p_1, \ldots, p_{\ell}\in C$ the cusps
of $C$, by $\nu:D\rightarrow C$ the normalization map at the cusps and set $q_i:=\nu^{-1}(p_i)$, for $i=1, \ldots, \ell$. Using \cite[Proposition~2.3]{Sm}, we have $\Omega_C^{\vee}=\nu_*\bigl(\Omega_D^{\vee}(-q_1-\cdots-q_{\ell})\bigr)$, in particular $H^0(C,\Omega_C^{\vee})\cong H^0\bigl(D,\omega_D^{\vee}(-q_1-\cdots-q_{\ell})\bigr)$. Condition (ii) in Definition \ref{defps} implies that the $\ell$-pointed curve $[D,q_1, \ldots, q_{\ell}]$ is stable. Since infinitesimal deformations of $[D, p_1, \ldots, p_{\ell}]$ are parametrized by $H^0\bigl(D,\Omega_D^{\vee}(-q_1-\cdots-q_{\ell})\bigr)$ this space vanishes and the conclusion follows.
\end{proof}

\begin{rmk} In characteristic $2$ and $3$ the stack $\mmp$ is not Deligne-Mumford, for in these cases cuspidal curves may have infinitesimal automorphisms. Precisely, if $C$ is a pseudo-stable with a cusp at the point $q$ and $\nu:D\rightarrow C$ the normalization at $q$, then it is no longer the case that the inclusion $\Omega_{C,q}^{\vee}\subseteq \bigl(\nu_*\Omega_D\bigr)_q$ holds, that is, there exist regular vector fields on $C$ that do not extend to a regular vector field on $D$, as explained in \cite[Example~1]{Sm}.
\end{rmk}
\vskip 4pt

Assume $g\geq 3$ and let $\cM_g^{\sharp}$ be the open substack of $\mmp$ classifying irreducible non-hyperelliptic pseudo-stable curves of genus $g$. If $C$ is  a non-hyperelliptic  irreducible pseudo-stable curve, it follows from \cite[Theorem~1.6]{H} that~$\omega_C$ is very ample. We  show that the condition $K_{\lfloor \frac{g}{2} \rfloor,1}(C,\omega_C)\neq 0$ can be described as the degeneracy locus of a morphism of vector bundles on $\cM_g^{\sharp}$, and in particular that it is a closed condition. We will then use Theorem~\ref{thm:vanishing-tangential} to produce examples of curves in $\cM_g^{\sharp}$  for which $K_{\lfloor \frac{g}{2} \rfloor,1}(C,\omega_C)=0$, and conclude that this vanishing holds generically.

\vskip 3pt

Let $M_{\omega_C}$ be the kernel bundle associated with $(C,\omega_C)$, as defined in (\ref{lazb}) and  $\bb{P}_C := \bb{P}(H^0(C,\omega_C)^{\vee})\cong \bb{P}^{g-1}$ and denote by  $M_C$ the kernel bundle associated with $(\bb{P}_C,\mc{O}_{\bb{P}_C}(1))$. We recall from (\ref{koszint}) that
\begin{equation}\label{eq:ker-alpha-C}
K_{i+1,1}(C,\omega_C) = \ker\Bigl\{ H^0\bigl(\bb{P}_C, \bigwedge^i M_C (2)\bigr) \overset{\a_C}{\lra} H^0\bigl(C, \bigwedge^i M_{\omega_C}\otimes \omega_C^2\bigr) \Bigr\}
\end{equation}


\vskip 3pt

We construct vector bundles $\mathcal{A}$ and $\mathcal{B}$ on $\cM_g^{\sharp}$, whose fibres at $[C]\in\cM_g^{\sharp}$ are
\begin{equation}\label{eq:fiber-A-B}
\mathcal{A}([C])=H^0\Bigl(\bb{P}_C, \bigwedge^i M_C (2)\Bigr)\  \mbox{ and } \ \mathcal{B}([C])=H^0\Bigl(C, \bigwedge^i M_{\omega_C}\otimes \omega_C^2\Bigr),
\end{equation}
and a morphism $\a:\mc{A}\lra\mc{B}$ whose restriction to the fiber over $[C]$ is the map $\a_C$ in (\ref{eq:ker-alpha-C}). To construct~$\mc{B}$, we start with the following.

\begin{prop}\label{vanishing1}
 If $C$ is an irreducible pseudo-stable curve of genus $g$,  then $$H^1\Bigl(C, \bigwedge^i M_{\omega_C}\otimes \omega_C^{2}\Bigr)=0\ \mbox{ for } i<g-1.$$
\end{prop}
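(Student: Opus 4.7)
The strategy is to apply Serre duality on the Gorenstein curve $C$ to convert the desired $H^1$-vanishing into an $H^0$-vanishing, and then to identify the latter with the kernel of an injective Koszul differential. Since a pseudo-stable curve has only nodal and cuspidal singularities, it is Gorenstein, so $\omega_C$ is an invertible sheaf and Serre duality holds in the familiar form
\[
H^1(C,F)^{\vee} \;\cong\; H^0(C,F^{\vee}\otimes\omega_C)
\]
for any locally free $F$. Since $\omega_C$ is globally generated (using here that $C$ is non-hyperelliptic if needed, or the general fact that irreducible pseudo-stable curves of genus $\geq 3$ have globally generated dualizing sheaf), the bundle $M_{\omega_C}$ defined by (\ref{lazb}) is locally free of rank $g-1$, with $\det M_{\omega_C}\cong\omega_C^{-1}$. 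Therefore
\[
\Bigl(\bigwedge^i M_{\omega_C}\Bigr)^{\vee}\cong \bigwedge^{g-1-i}M_{\omega_C}\otimes \omega_C,
\]
and applying this to $F=\bigwedge^i M_{\omega_C}\otimes\omega_C^2$ one finds $F^{\vee}\otimes\omega_C\cong \bigwedge^{g-1-i}M_{\omega_C}$. Hence the proposition is equivalent to the statement
\[
H^0\Bigl(C,\bigwedge^j M_{\omega_C}\Bigr)=0 \quad\text{for every } 1\leq j\leq g-1.
\]

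To establish this $H^0$-vanishing, I will exploit the evaluation sequence $0\to M_{\omega_C}\to V\otimes \mathcal{O}_C\to \omega_C\to 0$, where $V=H^0(C,\omega_C)$. Taking the $j$-th exterior power of this short exact sequence yields
\[
0 \lra \bigwedge^j M_{\omega_C} \lra \bigwedge^j V\otimes \mathcal{O}_C \lra \bigwedge^{j-1}M_{\omega_C}\otimes \omega_C \lra 0,
\]
and the associated long exact sequence on cohomology gives an identification
\[
H^0\Bigl(C,\bigwedge^j M_{\omega_C}\Bigr)\;=\;\ker\Bigl(\bigwedge^j V\xrightarrow{\,\phi\,} H^0\bigl(C,\bigwedge^{j-1}M_{\omega_C}\otimes \omega_C\bigr)\Bigr).
\]
Composing $\phi$ with the inclusion $H^0(\bigwedge^{j-1}M_{\omega_C}\otimes\omega_C)\hookrightarrow H^0(\bigwedge^{j-1}V\otimes \omega_C)=\bigwedge^{j-1}V\otimes V$, and tracing through the naturality of the construction, one identifies the resulting map $\bigwedge^j V\to \bigwedge^{j-1}V\otimes V$ with the standard Koszul comultiplication. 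Since this map is injective for every $j\geq 1$, one concludes $\ker(\phi)=0$, and the proposition follows.

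\textbf{Main obstacle.} The only point that requires care is the identification of the composition $\bigwedge^j V\to H^0(\bigwedge^{j-1}M_{\omega_C}\otimes\omega_C)\hookrightarrow \bigwedge^{j-1}V\otimes V$ with the Koszul differential. This is a naturality check, obtained by factoring the map of sheaves $\bigwedge^j V\otimes \mathcal{O}_C\to \bigwedge^{j-1}M_{\omega_C}\otimes\omega_C$ through $\bigwedge^j V\otimes\mathcal{O}_C\to \bigwedge^{j-1}V\otimes V\otimes\mathcal{O}_C\to \bigwedge^{j-1}V\otimes \omega_C$, where the first arrow is comultiplication and the second uses the evaluation $V\otimes \mathcal{O}_C\to\omega_C$. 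Everything else reduces to standard Serre duality on a Gorenstein curve together with the injectivity of the Koszul comultiplication, both of which are characteristic-free.
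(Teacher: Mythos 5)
Your proposal is correct and follows essentially the same line as the paper's proof: both reduce via Serre duality and $\det M_{\omega_C}\cong\omega_C^{-1}$ to the vanishing $H^0\bigl(C,\bigwedge^{g-1-i}M_{\omega_C}\bigr)=0$, and then identify this $H^0$ with the kernel of the (injective) Koszul differential $\bigwedge^{j}V\to\bigwedge^{j-1}V\otimes V$ coming from the evaluation sequence. The only cosmetic difference is that you work with one step of the exterior-power sequence plus the natural inclusion into the next term, whereas the paper invokes the full Koszul-type resolution and left exactness of global sections; the content is the same.
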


\proof
We have that  $\mbox{det}(M_{\omega_C})=\omega_C^{\vee}$,  which implies that
$\bw^i M_{\omega_C} \otimes \omega_C^{2} \cong \bw^{g-1-i} M_{\omega_C}^{\vee} \otimes \omega_C$.
Combining this with Serre duality, it follows that
\[H^1\Bigl(C, \bigwedge^i M_{\omega_C}\otimes \omega_C^{2}\Bigr) \cong H^0\Bigl(C, \bigwedge^{g-1-i} M_{\omega_C}\Bigr)^{\vee}.\]
Thinking of (\ref{koszint}) as a two-step resolution of the kernel bundle, it follows that $\bigwedge^{g-1-i} M_{\omega_C}$ is resolved by the $(g-1-i)$-th exterior power of this resolution, which is the Koszul-type complex
\[\bigwedge^{g-1-i} H^0(C,\omega_C) \oo \mc{O}_C \lra \bigwedge^{g-2-i} H^0(C,\omega_C) \oo \omega_C \lra \cdots\]
Using the left exactness of the global sections functor, we get
\[H^0\Bigl(C, \bigwedge^{g-1-i} M_{\omega_C}\Bigr) = \ker\Bigl\{\bigwedge^{g-1-i} H^0(C,\omega_C) \lra \bigwedge^{g-2-i} H^0(C,\omega_C) \oo H^0(C,\omega_C) \Bigr\},\]
which vanishes since $g-1-i>0$ and the Koszul differential is an injection.
\endproof

We now let $f:\mathcal{C}\rightarrow \cM_g^{\sharp}$ denote the universal pseudo-stable curve, we let $\omega_f$ denote the relative dualizing sheaf, and define the vector bundle $\mc{M}$ on $\cM_g^{\sharp}$ by
\begin{equation}\label{eq:def-cM}
\cM:=\ker\bigl(f^*f_*(\omega_f)\twoheadrightarrow \omega_f),
\end{equation}
where surjectivity follows because $\omega_C$ is globally generated for $[C]\in\cM_g^{\sharp}$. Let $\cB:=f_*\Bigl(\bigwedge^i\cM\otimes \omega_f^{2}\Bigr).$ Using Grauert's theorem \cite[Corollary~III.12.9]{hartshorne} and Proposition \ref{vanishing1}, we conclude that $\cB$ is locally free, and by construction, the fiber $\mc{B}([C])$ is given as in (\ref{eq:fiber-A-B}).

\vskip 4pt

To construct $\cA$, we proceed in a way similar to \cite{F}. We begin by observing as in the proof of Proposition~\ref{vanishing1} that $\bw^i M_C(2)$ is resolved by a Koszul type complex
\[ \bw^i H^0(\bb{P}_C,\mc{O}_{\bb{P}_C}(1)) \oo \mc{O}_{\bb{P}_C}(2) \lra \bw^{i-1} H^0(\bb{P}_C,\mc{O}_{\bb{P}_C}(1)) \oo \mc{O}_{\bb{P}_C}(3) \lra \cdots\]
By taking global sections and using the fact that $H^0(\bb{P}_C,\mc{O}_{\bb{P}_C}(1)) = H^0(C,\omega_C)$, it follows that
$$H^0\Bigl(\bb{P}_C, \bigwedge^i M_C (2)\Bigr) = \ker\Bigl\{\bw^i H^0(C,\omega_C) \oo \Sym^2 H^0(C,\omega_C) \lra \bw^{i-1} H^0(C,\omega_C) \oo \Sym^3 H^0(C,\omega_C) \Bigr\}$$
We consider the locally free sheaf $\H = f_*(\omega_f)$, whose fiber over $[C]$ is $H^0(C,\omega_C)$, and define $\mc{A}$ to be the sheaf resolved by the Koszul-type complex
\[ \bw^i\H \oo \Sym^2\H \lra \bw^{i-1}\H \oo \Sym^3 \H \lra \cdots \lra \Sym^{i+2}\H \lra 0.\]
Since this is a finite locally free complex which is exact except at the beginning, it follows that $\mc{A}$ is locally free. Moreover, restricting to the fiber over $[C]$ we get that $\mc{A}([C])$ is as in (\ref{eq:fiber-A-B}).

To construct the map $\a:\mc{A} \rightarrow \mc{B}$, it is enough by adjunction to define a map $f^*\mc{A} \rightarrow \bigwedge^i\cM\otimes \omega_f^{2}$. Since $f^*$ commutes with tensor constructions, it follows that $f^*\mc{A}$ is resolved by the Koszul-type complex
\begin{equation}\label{eq:kosz-1}
\bw^i (f^*\H) \oo \Sym^2(f^*\H) \lra \bw^{i-1}(f^*\H) \oo \Sym^3 (f^*\H) \lra \cdots \lra \Sym^{i+2}(f^*\H) \lra 0.
\end{equation}
Moreover, it follows from (\ref{eq:def-cM}) that $\bigwedge^i\cM\otimes \omega_f^{2}$ is resolved by the Koszul-type complex
\begin{equation}\label{eq:kosz-2}
\bw^i (f^*\H) \oo \omega_f^2 \lra \bw^{i-1}(f^*\H) \oo \omega_f^3 \lra \cdots \lra \omega_f^{i+2} \lra 0.
\end{equation}
The  morphism $f^*\H \rightarrow \omega_f$ induces a map between the complexes (\ref{eq:kosz-1}) and (\ref{eq:kosz-2}), which in turn induces a map between the $0$-th homology groups, which is the desired map $f^*\mc{A} \rightarrow \bigwedge^i\cM\otimes \omega_f^{2}$. Defining $\a$ by adjunction and restricting to the fiber over $[C]$ we obtain the natural map $\a_C$ in (\ref{eq:ker-alpha-C}), as desired.

\vskip 3pt

We are now in a position to conclude that Theorem \ref{thm:vanishing-tangential} implies Green's Conjecture for generic curves of genus $g$ in characteristic $0$, or characteristic $p\geq \frac{g+2}{2}$.


\vskip 5pt


\begin{proof}[Proof of Theorem \ref{thm:genericGreen}]
Since $\mbox{char}(\kk)=0$ or $\mbox{char}(\kk)\geq \frac{g+2}{2}$, it follows from Theorem~\ref{thm:vanishing-tangential} that if $C$ is a generic linear section of the tangential variety $\mc{T}$ then $K_{\lfloor \frac{g}{2} \rfloor,1}(C,\OO_{C}(1))=0$. Such a curve $C$ is an irreducible, $g$-cuspidal rational curve, where the cusps correspond to the intersection of the generic hyperplane with the rational normal curve. Necessarily, $C$ has maximal gonality $\lfloor \frac{g+3}{2}\rfloor $, therefore  $C$ defines a point $[C] \in \cM_g^{\sharp}$. By adjunction, $\omega_C\cong \omega_{\mc{T}}(1)_{|_{C}}\cong \OO_C(1)$, by Corollary~\ref{cor:omega-T}. We conclude that $C$ is a canonically embedded pseudo-stable curve, and that
$K_{\lfloor \frac{g}{2} \rfloor,1}(C,\omega_{C}) = 0$.
If we take $i=\lfloor \frac{g}{2}\rfloor - 1$ and construct the vector bundles $\mc{A},\mc{B}$ and the map $\a:\mc{A}\rightarrow \mc{B}$ as above, it follows that the map $\a_{C}$ is injective, so the same is true for the map $\a_{C'}$ for a general point $[C'] \in \cM_g^{\sharp}$. We conclude that the vanishing $K_{\lfloor \frac{g}{2}\rfloor,1}(C',\omega_{C'})=0$ holds for a general curve $[C']\in \cM_g$, as desired.
\end{proof}

\begin{rmk}
The proof above shows that every $g$-cuspidal rational curve satisfies Green's conjecture in suitable characteristics, regardless of the position of the cusps. Indeed, taking any $g$ distinct points on the rational normal curve $\Gamma\subseteq \PP^g$ defines a rational $g$-cuspidal hyperplane section of $\mc{T}$ with the same syzygies as $\mc{T}$, since $H^1(\mc{T},\OO_{\mc{T}}(d))=0$ for all $d\geq 0$, see \cite[Theorem~3.b.7]{G84}.
\end{rmk}

\begin{rmk}
Using Corollary \ref{cor:omega-T} , we observe  that the tangent developable $\mathcal{T}\subseteq \mathbb P^g$ can be viewed as a degenerate $K3$ surface in the following sense. We denote by $\F_g$ the moduli
space of quasi-polarized $K3$ surfaces $[X,H]$, where $H\in \mbox{Pic}(X)$ is the polarization class with $H^2=2g-2$. Inside $\F_g$, we consider the \emph{unigonal} divisor
$D_g^{\mathrm{ug}}$ consisting of $K3$ surfaces $[X,H]$, such that $H=gE+\Gamma$, where $\Gamma^2=-2, E^2=0$ and $\Gamma\cdot E=0$.  The linear system $|H|$ has the rational curve $\Gamma$ as fixed component and each element of $|H|$ is a flag curve of the form $\Gamma+E_1+\cdots+E_g$, where $E_i\in |E|$ are elliptic curves. Let $\mathcal{H}_g$ be the Hilbert scheme of quasi-polarized $K3$ surfaces $X\subseteq \mathbb P^g$ with $\mbox{deg}(X)=2g-2$. Then $[\mathcal{T}]\in \mathcal{H}_g$. We let $\mathfrak{F}_g:=\mathfrak{H}_g\dblq SL(g+1)$ be the the corresponding GIT quotient.
There is a birational map $\phi:\F_g\dashrightarrow \mathfrak{F}_g$, which blows down the unigonal divisor $D_g^{\mathrm{un}}$ to a \emph{single point} corresponding to the tangential variety $\mathcal{T}\subseteq \mathbb P^g$.
\end{rmk}

\end{document}